\numberwithin{equation}{section}
\definecolor{airforceblue}{rgb}{0.36, 0.54, 0.66}
\definecolor{amethyst}{rgb}{0.6, 0.4, 0.8}
\definecolor{applegreen}{rgb}{0.55, 0.71, 0.0}
\def\corAB{}
\definecolor{purple}{rgb}{0.9,0,0.8}
\begin{document}

%


\newcommand{\frakg}{\mathfrak{g}}

\newcommand{\barray}{\begin{eqnarray*}}
\newcommand{\earray}{\end{eqnarray*}}

\newcommand{\dvec}[1]{ \llbracket #1 \rrbracket}

\newcommand{\Def}{:=}


\DeclareDocumentCommand \Pr { o }
{%
\IfNoValueTF {#1}
{\operatorname{Pr}  }
{\operatorname{Pr}\left[ {#1} \right] }%
}
\newcommand{\Prob}{\Pr}
\newcommand{\Exp}{\mathbb{E}}
\newcommand{\expect}{\mathbb{E}}
\newcommand{\Pto}{\overset{\mathbb{P}}{\to} }
\newcommand{\weakto}{\Rightarrow}
\newcommand{\lawequals}{\overset{\mathcal{L}}{=}}
\newcommand{\prob}{\Pr}
\newcommand{\pr}{\Pr}
\newcommand{\filt}{\mathscr{F}}
\newcommand{\ohadI}{\mathbbm{1}}
\DeclareDocumentCommand \one { o }
{%
\IfNoValueTF {#1}
{\ohadI }
{\ohadI\left\{ {#1} \right\} }%
}
\newcommand{\Bernoulli}{\operatorname{Bernoulli}}
\newcommand{\Binomial}{\operatorname{Binom}}
\newcommand{\Binom}{\Binomial}
\newcommand{\Poisson}{\operatorname{Poisson}}
\newcommand{\Exponential}{\operatorname{Exp}}



\newcommand{\Id}{\operatorname{Id}}
\newcommand{\Span}{\operatorname{span}}


\newcommand{\nicefrac}{\frac}
\newcommand{\half}{\frac12}
\DeclareDocumentCommand \JB { O{n} O{\lambda} } {J_{{#1}}({#2})}

\DeclareDocumentCommand \LP { O{\ESD} } {U_{ {#1} }}
\newcommand{\LPL}{ \LP[{\mu_N}] }

\newcommand{\ESD}{ L_N^{\model} }

\newcommand{\model}{\mathcal{M}}
\newcommand{\SVD}{\Sigma}
\newcommand{\LL}{\mathcal{L}}
\newcommand{\PI}{\Pi}
\DeclareDocumentCommand \PG { O{n} }
{
\mathfrak{S}_{{ #1 }}
}
\newcommand{\RS}{\mathcal{C}}

\newcommand{\TODO}[1]{ {\bf TODO: #1} }

\newcommand{\row}{X}
\newcommand{\col}{Y}
\newcommand{\srow}{x}
\newcommand{\scol}{y}
\newcommand{\csrow}{w}
\newcommand{\cscol}{z}
\newcommand{\COMP}[1]{ \check{#1} }

\date{\today}
\title[Spectral properties of random perturbation of Toeplitz] {Spectral properties of random perturbations of non-normal Toeplitz matrices}
\author[A.\ Basak]{Anirban Basak$^*$}
 \address{$^*$International Center for Theoretical Sciences
\newline\indent Tata Institute of Fundamental Research
\newline\indent Bangalore 560089, India
}


\begin{abstract}
Spectral properties of Toeplitz operators and their finite truncations have long been central in operator theory. In the finite dimensional, non-normal setting, the spectrum is notoriously unstable under perturbations. Random perturbations provide a natural framework for studying this instability and identifying spectral features that emerge in typical noisy situations. This article surveys recent advances on the spectral behavior of (polynomially vanishing) random perturbations of Toeplitz matrices, focusing mostly on the limiting spectral distribution, the distribution of outliers, and localization of eigenvectors, and highlight the major techniques used to study these problems. We complement the survey with new results on the limiting spectral distribution of polynomially vanishing random perturbation of Toeplitz matrices with continuous symbols, on the limiting spectral distribution of finitely banded Toeplitz matrices under exponentially and super-exponentially vanishing random perturbations, and on the complete localization of outlier eigenvectors of randomly perturbed Jordan blocks.
\end{abstract}

\maketitle


\section{Introduction}
\subsection*{Instability and insufficiency of the spectrum} 
Early usages of eigenvalues and eigenfunctions date back to the eighteenth and nineteenth centuries, primarily in the context of understanding physical problems involving vibrations and heat dissiptaions. Since then eigenvalues and eigenfunctions have been widely used to successfully study complex systems arising in varied areas, including, but not limited to, quantum mechanics, neuroscience and medical imaging, structural engineering, data science and machine learning, and even in the PageRank algorithm.  A closer inspection reveals that such successes are largely obtained when operators under consideration are normal or are nearly normal. 

Let us make a quick detour and recall the definition of a normal operator: A bounded linear operator $T$ on a complex Hilbert space $\mathscr{H}$ is said to be normal if it commutes with its Hermitian adjoint $T^*$. That is, $T T^*=T^* T$. Otherwise, it is said to be non-normal.  

Non-normal operators arise naturally in many real-world systems that are non-conservative, dissipative, or open, i.e.~where energy is not preserved. For example, open quantum systems, shear flows in fluid dynamics, linearized models of weather systems and climate dynamics, and neural network models with asymmetric connectivity.  Compared to normal operators, the eigenvalues of non-normal operators pose additional challenges: (i) eigenvalue analysis in many applications turns out to be insufficient and therefore can be misleading, and (ii) eigenvalues are sensitive to perturbations \corAB{and often} yielding unreliable results. Let us illustrate these two features through the following two simple examples.

\begin{example}\label{ex:fAfB}\footnote{Example taken from \cite{tref91}.} Define 
\begin{equation}\label{eq:def-AB}
A:=\begin{pmatrix} -1 & 1 \\ 0 & -1 \end{pmatrix} \quad \text{ and } \quad B:=\begin{pmatrix} -1 & 5 \\ 0 & -2 \end{pmatrix}. 
\end{equation}
\corAB{Let} $f_A(t):= \| \exp(tA)\|_2$ and $f_B(t):=\|\exp(tB)\|_2$ for $t \geqslant 0$, where $\|\cdot \|_2$ denotes the operator norm induced by the standard Euclidean norm. In the left panel of Figure \ref{fig12} the red and the blue curves represent $f_A$ and $f_B$, respectively, as a function of $t$. Since both eigenvalues of both $A$ and $B$ are {\em strictly negative} it can deduced that $f_A(t), f_B(t) \to 0$ as $t \to \infty$. In fact, as the maximal eigenvalues  of $A$ and $B$ are identical it further follows that $\f1t \log(f_A(t)/f_B(t)) \to 0$ as $t \to \infty$ (cf.~\cite[Theorem 15.3]{TE05}). 
However, the `hump'-like structure in the blue curve cannot be explained solely by the eigenvalues of $B$. Such hump-like structures are quite ubiquitous in dynamical systems and are known as the `transient behavior' of the system. 
\end{example} 

\begin{figure}[htbp]
  \centering
   \begin{minipage}[b]{0.4\linewidth}
   \includegraphics[width=\textwidth]{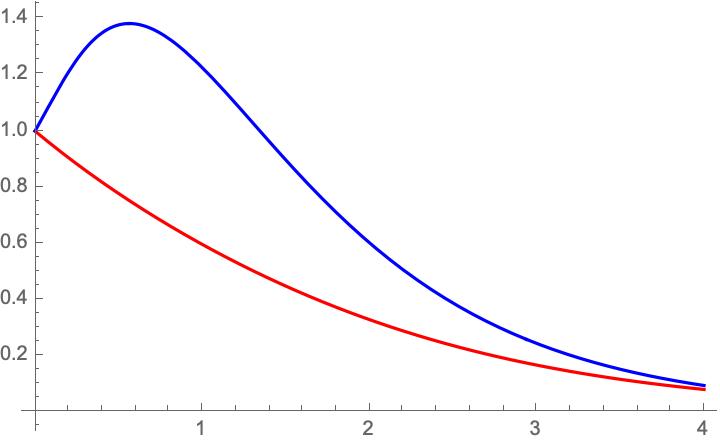}
 
 \end{minipage}
 \hspace{1cm}
 \begin{minipage}[b]{0.3\linewidth}
  \includegraphics[width=\textwidth]{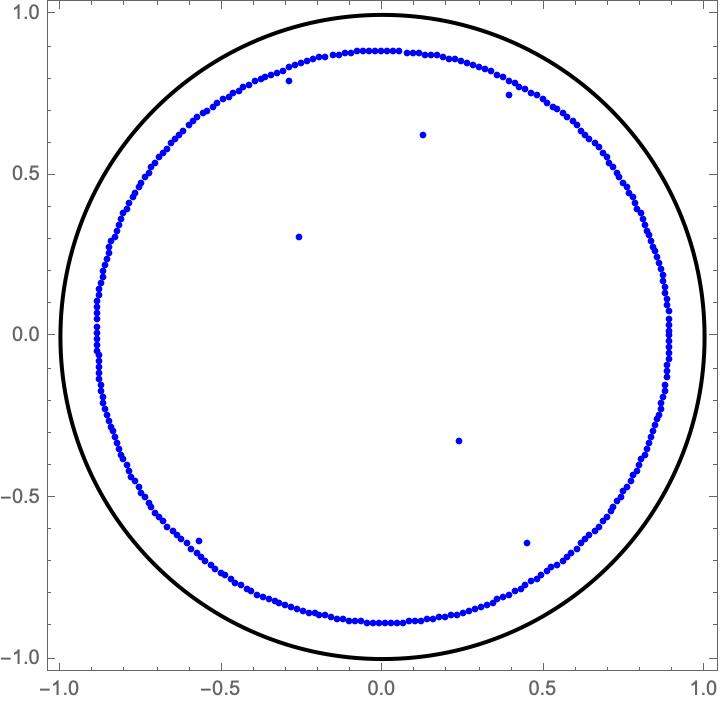}
 \end{minipage}
 \caption{\texttt{Left panel:} The red and the blue curves represent $f_A(t)$ and $f_B(t)$ (defined in Example \ref{ex:fAfB}), respectively, as a function of $t$.  \texttt{Right panel:} Eigenvalues of $\widehat{J}_N= U_N J_N U_N^*$, $N=1000$, with $U_N$ a {\em simulated} (in Mathematica) Haar unitary matrix, are plotted in blue. The black curve is the unit circle.}
 \label{fig12}
\end{figure}

\begin{example}\label{ex:J} Let $J_N$ be the standard Jordan block of dimension $N$ with zero on the \corAB{main} diagonal.  That is, the entries on the first super-diagonal position of $J_N$ are one, and all other 
entries are zero. Simulate a \corAB{Haar} unitary matrix $U_N$ from the unitary group of dimension $N$ and set $\widehat{J}_N:= U_N J_N U_N^*$. Clearly, the eigenvalues of $J_N$ and $\widehat{J}_N$ are same and are all 
equal to zero. However, when Mathematica is asked to compute the eigenvalues of $\widehat{J}_N$, it
fails miserably to locate the correct eigenvalues. See the right panel of Figure \ref{fig12}. The 
eigenvalues of $\widehat J_N$, for $N=1000$, are plotted in blue and the unit circle $\mathbb{S}^1$ on the complex plane is drawn in black. 
Note that most of the eigenvalues computed through Mathematica are found near $\mathbb{S}^1$. This anomaly is due to the rounding or the machine error. 
\end{example}

\subsection*{Pseudospectrum} 
The phenomena noted in Examples \ref{ex:fAfB} and \ref{ex:J} cannot be explained by analyzing the eigenvalues of relevant matrices. Notice that both these examples involve non-normal matrices. Instead of the spectrum, the correct object to look at for such matrices is the `pseudospectrum'. The earliest definition of the pseudospectrum can be traced back Varah's Ph.D.~thesis in $1967$ \cite{V67}. Seminal contributions by Lloyd N.~Trefethen and collaborators in the nineties popularized the concept of the pseudospectrum. We refer the reader to \cite[Chapter 6]{TE05} for a brief historical account of the advancement of this area. 

From a mathematical perspective a key difference between a normal and a non-normal operator is that the former admits a spectral theorem while the latter does not. In fact, from the spectral theorem it follows that for any bounded normal operator $T$, 
\beq\label{eq:resolvent-normal}
\|(z-T)^{-1}\| = \f{1}{{\rm dist}(z, \sigma(T))}, \qquad \text{ for all } z \notin \sigma(T),
\eeq 
(see Section \ref{sec:notation} for the notational conventions adopted in this article) where 
\[
\rho(T):= \{\lambda \in \C: (\lambda -T)^{-1} \text{ is a densely defined bounded linear operator on } \mathscr{H}\}
\] 
is the resolvent set of $T$, and $\sigma(T):= \C \setminus \rho(T)$ is the spectrum of $T$. In contrast, for a non-normal operator the \abbr{LHS} of \eqref{eq:resolvent-normal} can only be lower bounded by its \abbr{RHS}. Moreover, for a $z$ of distance order one from $\sigma(T)$, for a non-normal operator $T$, one can have both $\|(z-T)^{-1}\| \asymp 1$ and $\|(z-T)^{-1}\| \gg 1$, commonly known in the literature as `zone of spectral stability' and `zone of spectral instability', respectively. The notion of pseudospectrum is essentially an attempt to characterize and quantify these zones.


\begin{dfn}\label{dfn:pseudospectrum}
Let $M \in {\rm Mat}_N(\C)$, the set of all $N \times N$ matrices with possibly complex valued entries. Let $\| \cdot \|$ be any norm on $\C^N$ and for any $B \in {\rm Mat}_N(\C)$, by a slight abuse of notation, we write $\|B\|$ to denote the operator norm of $B$ induced by the chosen norm on $\C^N$. Fix any $\vep >0$. 
The $\vep$-pseudospectrum of $M$ is defined as
\[
\sigma_\vep(M):=\left\{ z \in \C: \| (z -M)^{-1}\| > \vep^{-1}\right\}. 
\]
\end{dfn}
In this article, we only discuss the notion of pseudospectrum for finite dimensional matrices. It has a natural extension for bounded linear operators on Banach spaces, see \cite[Chapter 4]{TE05}. 

The following result provides two alternate characterizations of $\sigma_\vep(M)$. 

\begin{proposition}[{\cite[Theorem 2.1]{TE05}}]\label{prop:te05-1}
Let $M$, $\vep$, and the norm $\| \cdot \|$ be as in Definition \ref{dfn:pseudospectrum}. Then
\begin{align*}
\sigma_\vep(M) & = \left\{z \in \C: \exists \, E \in {\rm Mat}_N(\C) \text{ with } \|E\| < \vep \text{ such that } z \in \sigma(M+E) \right\}\\
& = \left\{ z \in \C: \exists \, v \in \C^N \text{ with } \|v\|=1 \text{ such that } \|(z - M) v \| < \vep\right\}.
\end{align*}
\end{proposition}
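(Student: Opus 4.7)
The plan is to prove the two set equalities by establishing the chain of inclusions $\sigma_\vep(M) \subseteq S_3 \subseteq S_2 \subseteq \sigma_\vep(M)$, where $S_2$ and $S_3$ denote, respectively, the perturbation set and the approximate-eigenvector set on the right-hand sides of the proposition. I would adopt the standard convention that $\|(z-M)^{-1}\| = +\infty$ whenever $z \in \sigma(M)$, so that $\sigma(M) \subseteq \sigma_\vep(M)$; points of $\sigma(M)$ also trivially lie in $S_2$ (take $E=0$) and in $S_3$ (take any unit vector in the kernel of $z-M$). Consequently it suffices to establish each inclusion for $z$ in the resolvent set of $M$.

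For $\sigma_\vep(M) \subseteq S_3$, the hypothesis $\|(z-M)^{-1}\| > \vep^{-1}$ produces a unit vector $w$ with $\|(z-M)^{-1}w\| > \vep^{-1}$. Setting $v := (z-M)^{-1}w / \|(z-M)^{-1}w\|$ yields $\|v\|=1$ and
\[
\|(z-M)v\| = \frac{\|w\|}{\|(z-M)^{-1}w\|} < \vep,
\]
so $z \in S_3$. For $S_2 \subseteq \sigma_\vep(M)$, if $z \in \sigma(M+E)$ with $\|E\| < \vep$ and $z \notin \sigma(M)$, then from the factorization
\[
z - (M+E) = (z-M)\bigl(I - (z-M)^{-1}E\bigr),
\]
singularity of the left-hand side forces $I - (z-M)^{-1}E$ to be singular, hence $\|(z-M)^{-1}E\| \geq 1$, and therefore $\|(z-M)^{-1}\| \geq 1/\|E\| > \vep^{-1}$.

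The remaining inclusion $S_3 \subseteq S_2$ is carried out by constructing an explicit rank-one perturbation. Given a unit vector $v$ with $r := (M-z)v$ of norm less than $\vep$, I would produce a linear functional $\phi$ on $\C^N$ of dual norm one such that $\phi(v) = 1$ (whose existence is guaranteed by Hahn--Banach) and set $E := r \, \phi(\cdot)$. Then $\|E\| = \|r\| \cdot \|\phi\| = \|r\| < \vep$, and $(M+E)v = Mv + r = zv$, so $z$ is an eigenvalue of $M+E$.

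The argument is routine linear algebra; the only subtlety I would emphasize is that the norm on $\C^N$ is arbitrary, so the norming functional $\phi$ used in the rank-one construction cannot be taken to be an inner product with $v$ and must instead be produced by Hahn--Banach (or equivalently, by the very definition of the dual norm together with compactness of the dual unit ball). This is the sole point in the proof that demands genuine care.
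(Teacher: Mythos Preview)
The paper does not supply a proof of this proposition; it is quoted from \cite{TE05} as background. Your argument is the standard one and is correct in structure, with one trivial sign slip in the rank-one construction: with $r := (M-z)v$ and $E := r\,\phi(\cdot)$ one obtains $(M+E)v = Mv + r = 2Mv - zv$, not $zv$. Take instead $r := (z-M)v$ (or equivalently set $E := -r\,\phi(\cdot)$), and the computation goes through: $(M+E)v = Mv + (z-M)v = zv$. Your emphasis on invoking Hahn--Banach rather than an inner product to produce the norming functional is exactly the right point, since the norm on $\C^N$ is arbitrary.
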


It is trivial to note that $\sigma_\vep(M) \supset \sigma(M)$. Proposition \ref{prop:te05-1} additionally shows that $\sigma_\vep(M)$ consists of regions in the complex plane that can be reached by the eigenvalues of $M$ perturbed by some matrix of operator norm at most $\vep$. The following result captures a key difference between the pseudospectra of normal and non-normal matrices. 

\begin{proposition}[{\cite[Theorems 2.2 and 2.3]{TE05}}]\label{prop:te05-2}
Let $M$, $\vep$, and the norm $\| \cdot \|$ be as in Definition \ref{dfn:pseudospectrum}. The following properties of $\sigma_\vep(M)$ hold.
\begin{enumerate}
\item[(i)] $\sigma_\vep(M) \supset \sigma(M) + \D(0,\vep)$.
\item[(ii)] If $M$ is normal and $\|\cdot\|=\|\cdot\|_2$, the standard Euclidean norm then
\beq\label{eq:pseudo-normal}
\sigma_\vep(M) = \sigma(M) +\D(0,\vep), \quad \forall \vep >0. 
\eeq
Conversely, if $\|\cdot \|=\|\cdot\|_2$ and \eqref{eq:pseudo-normal} holds then $M$ must be a normal matrix. 
\item[(iii)] Let $M$ be diagonalizable. That is, there exists a diagonal matrix $D$ and a non-singular matrix $V$ such that $M=V D V^{-1}$.  For matrix $A$, let $\kappa(A)$ be its condition number. That is, $\kappa(A)= \|A\| \cdot \|A^{-1}\|$, \corAB{for $A$} invertible and $\kappa(A)=\infty$ otherwise. If $\| \cdot\|=\|\cdot\|_2$ then
\[
\sigma_\vep(M) \subset \sigma(M)+ \D(0, \vep \kappa(V)), \qquad \forall \vep >0. 
\] 
\end{enumerate}
\end{proposition}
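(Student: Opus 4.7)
The plan is to derive each of the three items essentially from the two alternate characterizations in Proposition~\ref{prop:te05-1} and from the spectral theorem, with the hard part being the converse direction of (ii).

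For (i), I would use the second characterization. If $z \in \sigma(M)+\D(0,\vep)$, write $z=\lambda+w$ with $\lambda\in\sigma(M)$ and $|w|<\vep$. Pick a unit eigenvector $v$ with $Mv=\lambda v$; then $\|(z-M)v\|=|z-\lambda|\,\|v\|<\vep$, so $z\in\sigma_\vep(M)$. (Equivalently, in the first characterization one can take the perturbation $E=(z-\lambda)I$, whose induced operator norm in any norm equals $|z-\lambda|$.)

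For the forward part of (ii), the spectral theorem gives $M=UDU^\ast$ with $U$ unitary and $D=\mathrm{diag}(\lambda_1,\dots,\lambda_N)$. Then $(z-M)^{-1}=U(zI-D)^{-1}U^\ast$, and unitary invariance of $\|\cdot\|_2$ yields
\[
\|(z-M)^{-1}\|_2=\max_{i}\frac{1}{|z-\lambda_i|}=\frac{1}{\mathrm{dist}(z,\sigma(M))},
\]
so $z\in\sigma_\vep(M)\iff\mathrm{dist}(z,\sigma(M))<\vep$, which is \eqref{eq:pseudo-normal}. For the converse, I would first observe that because $\sigma_\vep(M)$ is an open sublevel set $\{z:\|(z-M)^{-1}\|_2>\vep^{-1}\}$, equality of $\sigma_\vep(M)$ and $\sigma(M)+\D(0,\vep)$ for every $\vep>0$ forces the pointwise identity
\[
\|(z-M)^{-1}\|_2=\frac{1}{\mathrm{dist}(z,\sigma(M))},\qquad \forall\,z\notin\sigma(M).
\]
This is the crux. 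To extract normality from it, I would let $z\to\infty$ along a ray and combine two standard facts: always $\sigma_{\min}(z-M)\le\mathrm{dist}(z,\sigma(M))$ (since each eigenvalue of $M$ produces an eigenvalue $z-\lambda$ of $z-M$, whose modulus bounds $\sigma_{\min}$ from above), and $\|(z-M)^{-1}\|_2=1/\sigma_{\min}(z-M)$. Plugging the hypothesis in and expanding $\|(z-M)^{-1}\|_2^2=\|(z-M)^{-\ast}(z-M)^{-1}\|_2$ as a rational function of $z$ and $\bar z$, matching the leading asymptotics to $1/|z|^2$ and the next order forces $MM^\ast=M^\ast M$, i.e.\ normality. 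I expect this is the main obstacle of the proposition, because it requires turning a geometric statement about pseudospectra into a pointwise resolvent identity and then into the algebraic identity $MM^\ast=M^\ast M$.

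For (iii), with $M=VDV^{-1}$ one has $(z-M)^{-1}=V(zI-D)^{-1}V^{-1}$, so by submultiplicativity
\[
\|(z-M)^{-1}\|_2\le\|V\|_2\,\|V^{-1}\|_2\,\|(zI-D)^{-1}\|_2=\frac{\kappa(V)}{\mathrm{dist}(z,\sigma(M))}.
\]
Hence $\|(z-M)^{-1}\|_2>\vep^{-1}$ forces $\mathrm{dist}(z,\sigma(M))<\vep\kappa(V)$, which is the desired Bauer--Fike-type inclusion. This step is routine once (ii) is in place, and together with (i) it sandwiches $\sigma_\vep(M)$ between $\sigma(M)+\D(0,\vep)$ and $\sigma(M)+\D(0,\vep\kappa(V))$.
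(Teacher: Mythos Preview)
The paper does not give its own proof of this proposition; it simply cites \cite[Theorems 2.2 and 2.3]{TE05} and uses the statement as background. So there is no in-paper argument to compare your proposal against.

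On the merits of your proposal: parts (i), the forward direction of (ii), and (iii) are completely standard and correct as you wrote them. The only place that is not a proof as written is the converse in (ii). You correctly reduce to the pointwise identity $\|(z-M)^{-1}\|_2=1/\mathrm{dist}(z,\sigma(M))$, but the sentence ``expanding $\|(z-M)^{-1}\|_2^2$ as a rational function of $z,\bar z$ and matching the next order forces $MM^\ast=M^\ast M$'' is not an argument: $\|(z-M)^{-1}\|_2^2$ is the largest eigenvalue of $(z-M)^{-\ast}(z-M)^{-1}$, not a fixed rational function, and the maximizing eigenvector changes with the direction of $z$, so ``matching orders'' requires real work (and it is not obvious that the commutator appears at the order you indicate). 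The textbook route is different and more direct: take a Schur decomposition $M=Q(D+N)Q^\ast$ with $N$ strictly upper triangular; if $N\neq0$, pick a nonzero entry $N_{ij}$ and exhibit, for $z$ near $\lambda_j$, an explicit unit vector $v$ with $\|(z-M)v\|<|z-\lambda_j|=\mathrm{dist}(z,\sigma(M))$, contradicting the identity. If you want to keep your asymptotic idea you would need to justify the perturbation expansion of $\lambda_{\min}\big((z-M)^\ast(z-M)\big)$ carefully, including the case of degenerate leading eigenvalues.
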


The key takeaway from Proposition \ref{prop:te05-2} is that for a normal matrix its $\vep$-pseudospectrum, when the chosen norm is the standard Euclidean norm, is simply the $\vep$-fattening of its spectrum. In contrast, the $\vep$-pseudospectrum, even for a small $\vep$, of a non-normal matrix can be quite far from its spectrum. 

\begin{figure}[htbp]
  \centering
   \begin{minipage}[b]{0.3\linewidth}
   \includegraphics[width=\textwidth]{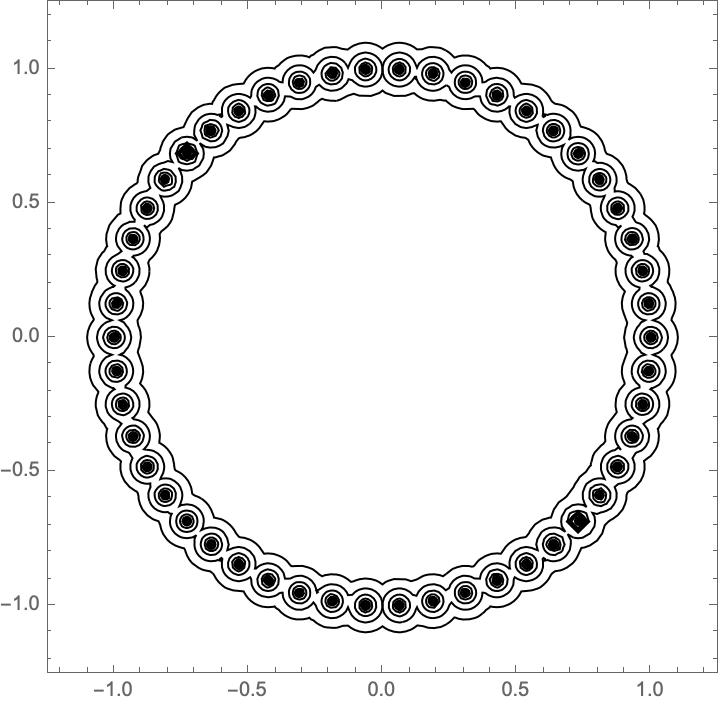}
 
 \end{minipage}
 \hspace{3cm}
 \begin{minipage}[b]{0.3\linewidth}
  \includegraphics[width=\textwidth]{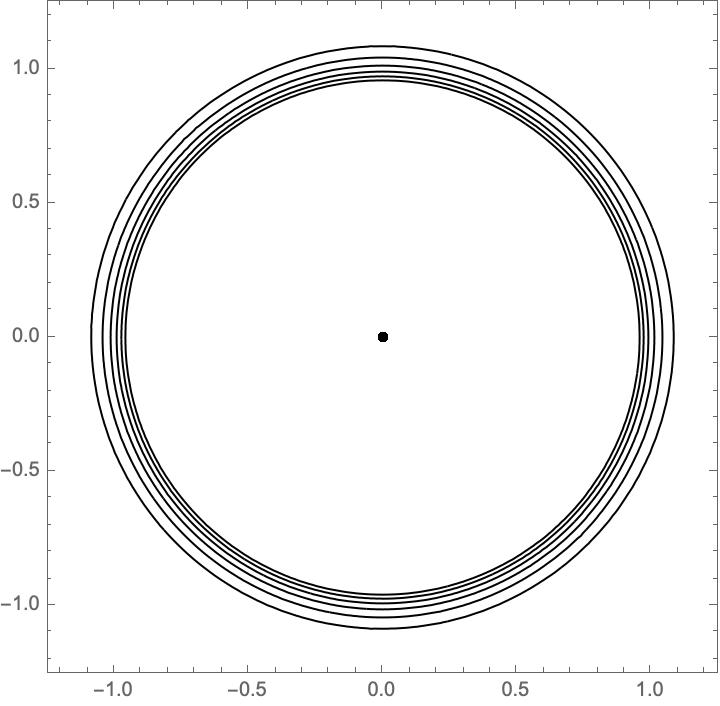}
 \end{minipage}
 \caption{$\vep$-pseudospectral level lines for $\vep=10^{-1}, 10^{-1.2}, \ldots, 10^{-2}$ for $C_N$ (\texttt{left panel}) and $J_N$ (\texttt{right panel}). $N=50$. All eigenvalues of $J_N$ are zero, while eigenvalues of $C_N$ (defined in Example \ref{ex:J1}), \corAB{a rank one perturbation of $J_N$ that is a circulant matrix and hence normal}, are equispaced on the unit circle. Both sets of eigenvalues are marked with black dots. Compare the stark difference of the pseudospectral level lines in left and right panels.}
 \label{fig34}
\end{figure}

\begin{example}\label{ex:J1}
Let $F_N(\vep)$ be the $N \times N$ matrix whose bottom left entry is $\vep = \vep_N \in (0,1]$, and the rest are zero. An easy computation shows that the eigenvalues of $\wt J_N(\vep):=J_N+F_N(\vep)$ are equispaced on the boundary of $\D(0,\vep^{1/N})$. This implies that even if $\vep=\vep_N \ll 1$ such that $\vep_N \asymp N^{-\gamma}$, for some $\gamma >0$, then the eigenvalues of $\wt J_N(\vep_N)$ approach the unit circle, as $N \to \infty$. On the other hand, if $\vep_N \asymp r^N$, for some $r \in (0,1)$, then the eigenvalues of $\wt J_N(\vep_N)$ approach the boundary of $\D(0,r)$. This shows that even an exponentially vanishing perturbation (in dimension) of $J_N$ has a {\em macroscopic} effect on its spectrum, for all large $N$. In particular, in terms of the pseudospectrum we have $\sigma_{r^{N}}(J_N) \supset {\D(0,r)}$ for any $r \in (0,1)$. This is evident in Figure \ref{fig34}. On the left panel we have $\vep$-pseudospectral level lines (the norm chosen is the Euclidean norm) of $J_N$ for $\vep=10^{-1}, 10^{-1.2}, \ldots, 10^{-2}$ and $N=50$. On the right we have the same for $C_N:=\wt J_N(1)$, a circulant matrix (and hence normal). Observe that all its pseudospectral level lines are tightly knit around its eigenvalues -- an illustration of Proposition \ref{prop:te05-2}(ii).

Let us now recall the definitions of $\wh J_N$ and $U_N$ from Example \ref{ex:J}. The machine error encountered during the simulation of $U_N$ and in the subsequent computation of the eigenvalues of $\wh J_N$ essentially implies that the eigenvalues that we see in the right panel of Figure \ref{fig12} are actually eigenvalues of some perturbation of $J_N$. These perturbations due to machine errors are typically polynomially small in the dimension. Therefore, as demonstrated in the paragraph above, we end up finding these eigenvalues near the unit circle, even for a moderately large value of $N$. 
\end{example}

\begin{figure}[htbp]
  \centering
   \begin{minipage}[b]{0.4\linewidth}
   \includegraphics[width=\textwidth]{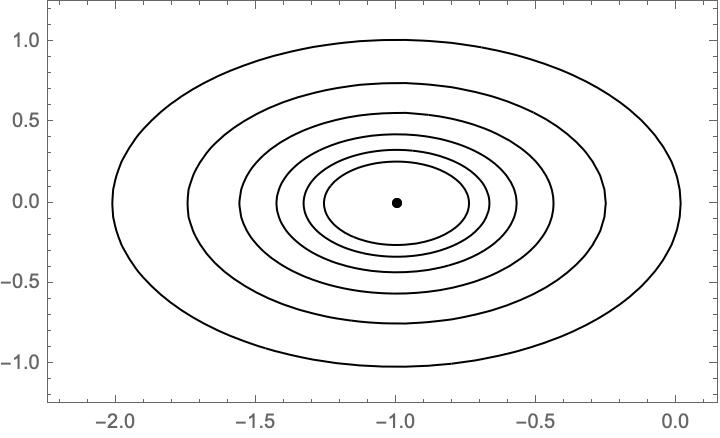}
 
 \end{minipage}
 \hspace{1cm}
 \begin{minipage}[b]{0.4\linewidth}
  \includegraphics[width=\textwidth]{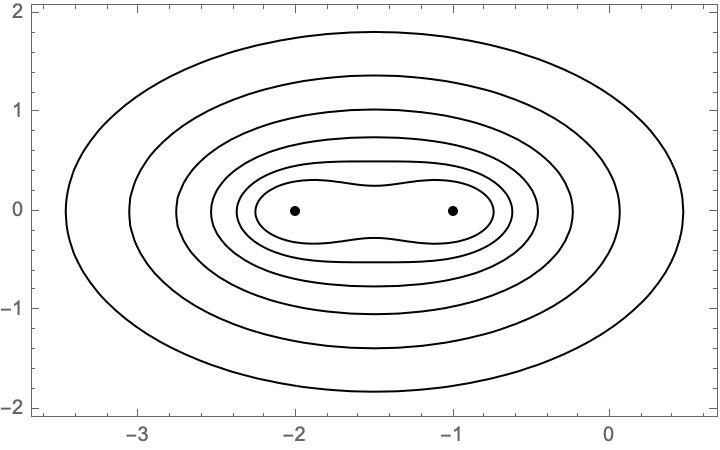}
 \end{minipage}
 \caption{$\vep$-pseudospectral level lines for $A$ (\texttt{left panel}) and $B$ (\texttt{right panel}), see \eqref{eq:def-AB} for their definitions, with $\vep=10^{-0.2}, 10^{-0.4}, \ldots, 10^{-1.2}$.}
 \label{fig:56}
\end{figure}

\begin{repexample}{ex:fAfB}[revisited]
The fundamental difference between the pseudospectral level lines of $A$ and $B$, as shown in Figure \ref{fig:56}, is that those of $A$ do not protrude into the right half of the plane, while those of $B$ do. This is indeed responsible for producing the hump-like structure of $f_B(t)$ for an intermediate range of $t$. See \cite[Chapter 15]{TE05} for further explanation and a proof. 

Example \ref{ex:fAfB} is a toy instance of the widely prevalent phenomena of transient behavior of dynamical systems due to the pseudospectral `bulge' of underlying operators, appearing in varied domains such as fluid dynamics, electrical power systems, epidemiology, aeroelasticity, chemical kinetics, and control systems.  
One such concrete example is the onset of turbulence  in the plane Couette flow at a high Reynolds number: The spectrum of the Navier-Stokes evolution operator linearized about the laminar flow is always contained in the left half of the plane. However, for a sufficiently large Reynolds number and a small $\vep>0$ its  $\vep$- pseudospectrum protrudes a distance `much' greater than $\vep$ into the right half-plane, and as a result certain perturbations of the plane Couette flow grow transiently at that high Reynolds number eventually decaying due to viscosity.  
\end{repexample}

\subsection*{Random perturbations of non-normal operators}

One notes from Proposition \ref{prop:te05-1} that pseudospectra describe the worst case behavior under a perturbation of a given size. To determine the pseudospectral level lines one essentially needs to choose a fine meshed grid and compute the norm of the resolvent at each of the grid points, making it computationally expensive and not scalable for large matrices. 
On the other hand, random perturbations reflect the typical case behavior. Most physical or biological systems are not adversarially perturbed -- they are simply noisy. Random perturbations model those more realistically. For example, see Figures \ref{fig7} and \ref{fig8}. 
Further, in many applications eigenvalue clouds resulting from random perturbations tend to align along pseudospectral level lines revealing pseudospectral boundary statistically (e.g., compare Figures \ref{fig8} and \ref{fig9}). 
Hence, even without determining the pseudospectral level lines, which involves heavy computations, one can identify salient characteristics of the underlying operator, such as pseudospectral bulges. These advantages of typical perturbation over a worst case perturbation analysis has led to a significant surge in the last twenty years  in studying spectral features of random perturbations of non-normal \corAB{matrices and operators}. 

\begin{figure}[htbp]
  \centering
   \begin{minipage}[b]{0.3\linewidth}
   \includegraphics[width=\textwidth]{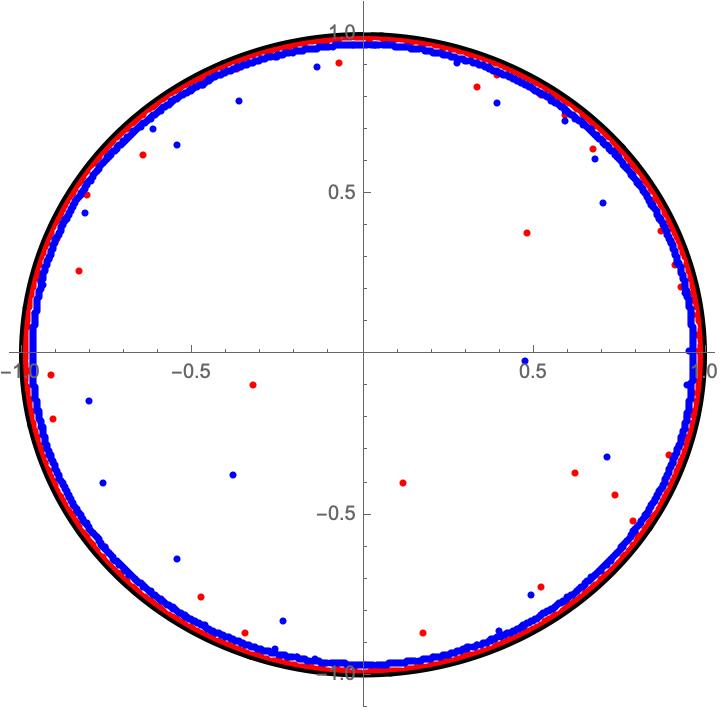}
 
 \end{minipage}
 \hspace{2cm}
 \begin{minipage}[b]{0.35\linewidth}
  \includegraphics[width=\textwidth]{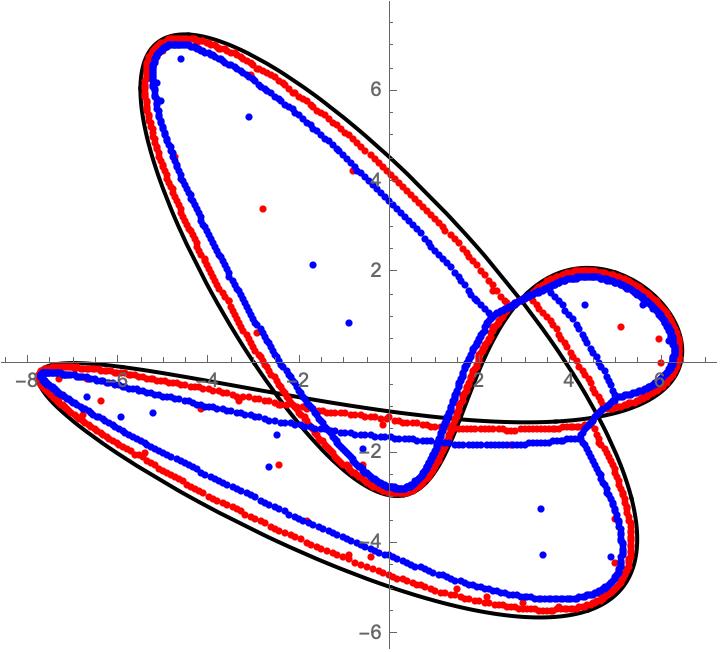}
 \end{minipage}
 \caption{Eigenvalues of $U_N A_N U_N^*$ are in blue, where $U_N$ is a simulated Haar unitary matrix via Mathematica, while eigenvalues of $A_N+N^{-2} G_N$ are in red, where $G_N$ is a matrix with i.i.d.~standard complex Gaussians.  $N=1000$. $A_N=T_N({\bm a})$ for symbols \corAB{${\bm a}(\zeta)=\zeta^{-1}$} (\texttt{left panel}) and ${\bm a}(\zeta)=2 \zeta^{-3}-\zeta^{-2} +2 {\rm i} \zeta^{-1}-4\zeta - 2 {\rm i} \zeta^2$ (\texttt{right panel}). See Definition \ref{dfn:toep-finite} for the definition of a Toeplitz matrix with a given symbol. Symbol curves ${\bm a}(\mathbb{S}^1)$ are in black.}
 \label{fig7}
\end{figure}

While the focus of this article is on the spectral properties of random perturbations of large non-normal matrices, there is a substantial and growing body of work regarding the same for non-normal operators. We refer the reader to the recent survey article \cite{V24} for a brief overview of results in that direction. 

Returning to the limiting spectral properties of random perturbation of large finite dimensional matrices perhaps one of the earliest works was done by \'Sniady in \cite{S02}, where the problem was addressed from a different perspective. To describe \'Sniday's perspective we need a couple of definitions. The first one is a notion of convergence borrowed from the free probability theory. 

\begin{dfn}[Convergence in $*$-moments]
Let $(\mathscr{A}, \phi)$ be a non commutative $W^*$ probability space. That is, $\mathscr{A}$ is a von Neumann algebra (cf.~\cite[Definition 5.2.21]{AGZ}) and $\phi$ is a normal faithful tracial state (cf.~\cite[Definitions 5.2.9 and 5.2.26]{AGZ}). Let ${\bm x} \in \mathscr{A}$ and $\{A_N\}_{N \in \N}$ be a sequence of matrices with $A_N \in {\rm Mat}_N(\C)$. The sequence of matrices $\{A_N\}_{N \in \N}$ is said to converge to ${\bm x}$ in $*$-moments if 
$\lim_{N \to \infty} N^{-1}\tr P(A_N, A_N^*) = \phi(P({\bm x}, {\bm x}^*))$, for every bivariate non-commutative polynomial $P$. 
\end{dfn}

The next notion generalizes the notion of spectral measure for non-normal operators.

\begin{dfn}[Brown measure]
Let $(\mathscr{A}, \phi)$ be as above. For any bounded self-adjoint element ${\bm h} \in \mathscr{A}$, we let ${\bm \nu}_{{\bm h}}$ to be its spectral measure, i.e.~the unique probability measure compactly supported on reals given by $\phi({\bm h}^n)=\int t^n d{\bm \nu}_{{\bm h}}(t)$ for all $n \in \N$. For any bounded operator ${\bm x} \in \mathscr{A}$ we define its Brown measure ${\bm \mu}_{{\bm x}}$  to be the probability measure on $\C$ given by 
${\bm \mu}_{{\bm x}}= \f{1}{2\pi}\Delta_z \int \log (t) \, {\bm \nu}_{|{\bm x}-z|} dt$, where the equality is in the sense of distribution, $|{\bm h_0}|:=\sqrt{{\bm h}_0^*{\bm h}_0}$ for any operator ${\bm h}_0$, and $\Delta_z := \f{\partial^2}{\partial ({\rm Re}\, z)^2}+ \f{\partial^2}{\partial ({\rm Im}\, z)^2}$, for $z \in \C$, is  the two dimensional Laplacian.
\end{dfn}

It can be checked that for $\mathscr{A}={\rm Mat}_N(\C)$ and $\phi(\cdot)=\f1N \tr(\cdot)$, the Brown measure of any $A_N \in {\rm Mat}_N(\C)$ is $L_{A_N}$, the empirical measure of its eigenvalues. Furthermore, if $\{A_N\}_{N \in \N}$ is a sequence of normal matrices, then \corAB{the} convergence to a bounded normal operator ${\bm x}$ in $*$-moments is sufficient for their respective Brown measures to converge. This fails as soon as $\{A_N\}_{N \in \N}$ are non-normal. For example, \corAB{it can be checked that} $\{J_N\}_{N \in \N}$ converges to ${\bm u}$, the Haar unitary operator (i.e.~$\phi({\bm u}^n)=\phi(({\bm u}^*)^n)=0$ for all $n \in \N$), in $*$-moments, while Brown measures of $J_N$ and ${\bm u}$ are Dirac measure at zero and the uniform measure on the unit circle on $\C$, respectively. \'Sniday asked whether one could randomly perturb $\{A_N\}_{N \in \N}$ so that the empirical measure of the eigenvalues of randomly perturbed $A_N$ converge to the Brown measure of the limiting operator ${\bm x}$ (in the sense of $*$-moment convergence). It was answered affirmatively in \cite{S02}, and such a phenomena is termed in the literature as `random regularization of Brown measure'. 

The proof in \cite{S02} uses a soft analysis (stochastic calculus and a diagonal subsequence extraction argument) and it does not shed any light on the strength of the random perturbation needed to achieve this regularization. The work \cite{GWZ} shows that if one is able to find a deterministic perturbation, say $\wt E_N$, so that $L_{\wt A_N}$, where $\wt A_N:=A_N+ \wt E_N$, converges to ${\bm \mu}_{{\bm x}}$ then any polynomially vanishing Gaussian perturbation of $A_N$ will have the same limiting spectral distribution (\abbr{LSD}). Therefore, in light of the discussion in Example \ref{ex:J1}, the \abbr{LSD} of any polynomially vanishing Gaussian perturbation of $J_N$ is the law of ${\bm u}$. In a follow up work \cite{W} the assumption on the distribution of the entries of the perturbation was relaxed. 
On the other hand, relying on a completely different approach \cite{DH} provides quantitative bounds on the number of eigenvalues for Gaussian random perturbations of Jordan blocks in a given region in the complex plane. 
Let us also mention in passing that there are examples of sequences of deterministic matrices such that no polynomially vanishing Gaussian perturbation regularizes the Brown measure, see \cite{FPZ}. 

Following \cite{DH, FPZ, GWZ, W} a substantial body of research has emerged that explores the spectral properties of polynomially vanishing random perturbations of Toeplitz and related matrices. This article aims to provide a comprehensive account of the current state of the art, elucidating the central ideas that underpin this growing literature, while also augmenting the existing body of results with some extensions, including results on \abbr{LSD} under exponentially small random perturbation and complete localization of \corAB{eigenvectors corresponding to} outlier eigenvalues. Section \ref{sec:results} introduces necessary terminologies, followed by results on the limiting spectral distribution, limiting outlier distribution, and localization of eigenvectors, while the latter sections are devoted for 
highlighting the key approaches and strategies in addressing these problems and for the proofs of the new results.

\subsection{Notational conventions}\label{sec:notation} 
We use the following standard asymptotic notation: For two sequences of nonnegative reals $\{a_n\}_{n \in \N}$ and $\{b_n\}_{n \in \N}$ we write $a_n \ll b_n$, $b_n \gg a_n$, and $a_n = o(b_n)$ to denote $\limsup_{n \to \infty} a_n/b_n=0$, while to denote $\limsup_{n \to \infty} a_n /b_n  < \infty$, the notation $a_n \lesssim b_n$, $b_n \gtrsim a_n$, and $a_n=O(b_n)$ are used. We further write $a_n \asymp b_n$ if $a_n \lesssim b_n$ and $b_n \lesssim a_n$. For $a,b \in \R$ we write $a \vee b:= \max\{a,b\}$, $a \wedge b:= \min\{a,b\}$, and $(a)_+:= a \vee 0$. For $m <n \in \N$ we let $[m,n]:=\{m+1,m+2, \ldots, n\}$ and further use the shorthand $[n]$ for the discrete interval $[1,n]$. 

For a matrix $\cM_n$ of dimension $n \times n$ we write $0 \le s_1(\cM_n) \le s_2(\cM_n) \leq \cdots \leq s_n(\cM_n)$ to denote the singular values of $\cM_n$ arranged in an nondecreasing order. It ill be convenient to write $s_{\min}(\cdot)$ for the smallest singular value of a matrix. Unless otherwise mentioned, the notation $\|\cM_n\|$ and $\|\cM_n\|_{{\rm HS}}$ will be used to denote the spectral and Hilbert-Schmidt norms of $\cM_n$ with respect to the standard Euclidean norm on $\C^N$, respectively. For $S_1, S_2 \subset [n]$ we let $\cM_n[S_1;S_2]$ to be the sub matrix of $\cM_n$ which consists of the rows and columns in $S_1$ and $S_2$, respectively. The notation $\cM_n^t$ is used to denote the transpose of $\cM_n$. Further, we write $L_{\cM_n}$ to denote the empirical measure of the eigenvalues of $\cM_n$. That is, $L_{\cM_n}:=\f1n\sum_{i=1}^n \lambda_i(\cM_n)$, where $\{\lambda_i(\cM_n)\}_{i=1}^n$ are eigenvalues of $\cM_n$. We use ${\rm Id}_n$ to denote the identity matrix of dimension $n$. For a vector $v \in \C^n$ and $I \subset [n]$ we write $v_I$ to denote the vector of length $|I|$ obtained from $v$ by deleting the indices corresponding to $[n]\setminus I$. Unless otherwise mentioned, for any vector $v$ we further let $\|v\|$ and $\|v\|_\infty$ be its Euclidean and $\ell^\infty$ norms, respectively. 

For two sets $S_1$ and $S_2$ we write $S_1\pm S_2$ to denote their Minkowski sum and difference. That is, $S_1 \pm S_2 := \{s_1 \pm s_2; s_1 \in S_1, s_2 \in S_2\}$. For any $w \in \C$ and $A \subset \C$ we let ${\rm dist} (w, A):= \inf_{u \in A} |w-u|$. For $a \in \C$ and $r >0$ we write $\D(a,r)$ to denote the open disk in the complex plane of radius $r$ centered at $a$.

\section{Random perturbations of non-normal Toeplitz matrices}\label{sec:results}

We begin this section with definitions of infinite dimensional Toeplitz and Laurent operators and some useful spectral properties of these operators. 

\begin{dfn}
For any map ${\bm a}: \mathbb{S}^1 \mapsto \C$ we write $\|{\bm a}\|_{\infty, \mathbb{S}^1}$ to denote the standard essential supremum norm with respect to the uniform measure on $\mathbb{S}^1$. We let $L^\infty(\mathbb{S}^1):=\{{\bm a} : \|{\bm a}\|_{\infty,\mathbb{S}^1} < \infty\}$. For any ${\bm a} \in L^\infty(\mathbb{S}^1)$  we define $L({\bm a}): \ell^2(\Z) \mapsto \ell^2(\Z)$, the Laurent operator with symbol ${\bm a}$, given by $\langle \mathfrak{e}_i, L({\bm a})\mathfrak{e}_j \rangle = a_{i-j}$, where $\{\mathfrak{e}_i\}_{i \in \Z}$ are canonical basis vectors in $\ell^2(\Z)$, $\langle \cdot, \cdot \rangle$ is the Euclidean inner product on $\ell^2(\Z)$ over \corAB{the} complex field,  and $\{a_n\}_{n \in \Z}$ are Fourier coefficients of ${\bm a}$. 
That is, 
\[
a_n:= \f{1}{2 \pi}\int_0^{2\pi} {\bm a}(e^{{\rm i} \theta}) e^{-{\rm i} n \theta} d \theta. 
\]
For ${\bm a} \in L^\infty(\mathbb{S}^1)$ we define $T({\bm a}): \ell^2(\N) \mapsto \ell^2(\N)$, the Toeplitz operator with symbol ${\bm a}$ given by $\langle \mathfrak{e}_i, T({\bm a})\mathfrak{e}_j \rangle = a_{i-j}$, where by a slight abuse of notation we now let $\{\mathfrak{e}_i\}_{i \in \N}$ to be the canonical basis vectors in $\ell^2(\N)$ and $\langle \cdot, \cdot \rangle$ to be the Euclidean inner product on $\ell^2(\N)$ over \corAB{the} complex field.
\end{dfn}

It is well known that for ${\bm a} \in L^\infty(\mathbb{S}^1)$ the operators $L({\bm a})$ and $T({\bm a})$ are bounded linear operators on $\ell^2(\Z)$ and $\ell^2(\N)$, respectively (cf.~\cite[Theorems 1.1 and 1.9]{BS99}). The following is a classical result regarding the spectra of infinite dimensional Laurent and Toeplitz operators. 

\begin{proposition}[{\cite[Theorems 1.2 and 1.17]{BS99}}]\label{prop:BS99}
Let ${\bm a} \in C(\mathbb{S}^1)$. For any $w \in \C\setminus {\bm a}(\mathbb{S}^1)$ we write ${\rm wind}({\bm a}, w)$ to denote the winding number of the closed curve ${\bm a}(\mathbb{S}^1)$ around $w$. Then $\sigma(L({\bm a}))= {\bm a}(\mathbb{S}^1)$ and
\[
\sigma(T({\bm a}))= {\bm a }(\mathbb{S}^1) \cup \left\{ w\in \C\setminus \corAB{{\bm a}(\mathbb{S}^1)}: {\rm wind}({\bm a}, w) \neq 0 \right\}.
\]
\end{proposition}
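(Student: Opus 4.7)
The plan is to handle the two spectral identities separately, reducing each to a familiar model.

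For the Laurent operator I would use the Fourier isomorphism $\mathcal{F}: \ell^2(\Z) \to L^2(\mathbb{S}^1)$ defined on basis vectors by $\mathcal{F}\mathfrak{e}_n = (\zeta \mapsto \zeta^n)$ (with normalized Haar measure on $\mathbb{S}^1$). A one-line check on basis vectors shows $\mathcal{F} L({\bm a}) \mathcal{F}^{-1} = M_{\bm a}$, the bounded multiplication operator by ${\bm a}$ on $L^2(\mathbb{S}^1)$. The spectrum of a multiplication operator is the essential range of its symbol, which for ${\bm a} \in C(\mathbb{S}^1)$ coincides with ${\bm a}(\mathbb{S}^1)$. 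This yields the first identity.

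For the Toeplitz operator I would first show the inclusion ${\bm a}(\mathbb{S}^1) \subset \sigma(T({\bm a}))$ by constructing approximate eigenvectors: for $\zeta_0 \in \mathbb{S}^1$ with $w = {\bm a}(\zeta_0)$, take the normalized truncations $v_n = c_n^{-1} \sum_{k=n}^{2n} \zeta_0^k \mathfrak{e}_k \in \ell^2(\N)$; continuity of ${\bm a}$ at $\zeta_0$ together with the fact that the support stays far from the boundary index $0$ gives $\|(T({\bm a}) - w) v_n\| \to 0$, so $w$ is in the approximate point spectrum. The remaining, and central, task is to identify those $w \notin {\bm a}(\mathbb{S}^1)$ for which $T({\bm a}) - w = T({\bm a} - w)$ fails to be invertible. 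Here I would invoke two classical results in tandem: the Gohberg--Coburn Fredholm theorem (for continuous ${\bm b}$, the operator $T({\bm b})$ is Fredholm iff ${\bm b}$ does not vanish on $\mathbb{S}^1$, and then ${\rm ind}\, T({\bm b}) = -{\rm wind}({\bm b}, 0)$) and Coburn's lemma (when ${\bm b} \not\equiv 0$, at least one of $\ker T({\bm b})$, $\ker T({\bm b})^*$ is trivial). Together these imply that invertibility of $T({\bm b})$ is equivalent to being Fredholm of index $0$. Applying this with ${\bm b} = {\bm a} - w$ and using ${\rm wind}({\bm a} - w, 0) = {\rm wind}({\bm a}, w)$ for $w \notin {\bm a}(\mathbb{S}^1)$ gives exactly the description claimed.

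The main obstacle is the Gohberg--Coburn index formula. The path I would follow is three-step: (i) verify it directly for monomial symbols $\chi_n(\zeta) = \zeta^n$, where $T(\chi_n)$ is the $n$-fold forward or backward shift on $\ell^2(\N)$ with explicit kernel/cokernel of dimension $n$; (ii) extend to Laurent polynomials ${\bm b}$ without zeros on $\mathbb{S}^1$ via a Wiener--Hopf factorization ${\bm b} = {\bm b}_- \chi_n {\bm b}_+$, using that $T({\bm b}_\pm)$ are invertible by triangularity, so $T({\bm b})$ is a compact perturbation of $T({\bm b}_-) T(\chi_n) T({\bm b}_+)$; (iii) pass from Laurent polynomials to continuous symbols by density in $C(\mathbb{S}^1)$ and continuity of the Fredholm index under norm perturbations, where the essential tool is the compactness of the ``semi-commutator'' $T({\bm a}) T({\bm b}) - T({\bm a}{\bm b})$ (a Hankel operator) for continuous symbols. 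Coburn's lemma itself is comparatively short: using $T({\bm b})^* = T(\overline{{\bm b}})$, a shift-invariance argument shows that non-trivial elements in both kernels would force ${\bm b}$ to vanish a.e.\ on $\mathbb{S}^1$ via an $F.$\ and $M.$\ Riesz type argument. Once these technical pieces are in place, stringing them together gives the proposition.
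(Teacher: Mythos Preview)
Your proposal outlines the standard classical proof of these results (Fourier conjugation for the Laurent case; Gohberg--Coburn index theory plus Coburn's lemma for the Toeplitz case), and the sketch is correct in its main steps. However, the paper does not provide its own proof of this proposition: it is stated with a direct citation to \cite[Theorems 1.2 and 1.17]{BS99} and used as a black-box input, so there is no in-paper argument to compare against.
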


Next we define finite dimensional Toeplitz matrices. 
\begin{dfn}\label{dfn:toep-finite}
For $N \in \N$ let $\Pi_N: \ell^2(\N) \mapsto \C^N$ be the canonical projection map. For ${\bm a} \in L^\infty(\mathbb{S}^1)$ we set $T_N({\bm a})$ to  be the restriction of $\Pi_N T({\bm a}) \Pi_N$ to the image of $\Pi_N$, the $N \times N$ dimensional Toeplitz matrix with symbol ${\bm a}$. Therefore,  
\[
T_N({\bm a}):=\begin{bmatrix}
a_{0} & a_{-1} & a_{-2} & \cdots & \cdots & a_{-(N-1)}\\
a_{1}& a_{0} & a_{-1} &\ddots &  & \vdots\\
a_{2} &a_{1}& \ddots & \ddots & \ddots & \vdots \\
\vdots & \ddots & \ddots & \ddots & a_{-1} & a_{-2}\\
\vdots & & \ddots & a_{1} & a_0 & a_{-1}\\
a_{N-1} & \cdots & \cdots & a_{2} &a_{1} & a_{0}
\end{bmatrix}
.
\]
\end{dfn}
For most of this article we will consider the case when the symbol ${\bm a}$ is a Laurent polynomial. That is,
\beq\label{eq:laurent-poly}
{\bm a}(\lambda) = \sum_{n =-N_-}^{N_+} a_k \lambda^k, \quad \lambda \in \C,
\eeq
some integers $N_\pm$ such that $-N_- \le N_+$. Without loss of generality we further assume $|N_+|+|N_-| > 0$ (otherwise the matrix is a constant multiple of identity \corAB{and hence normal}) and $\min\{|a_{N_-}|, |a_{N_+}|\} >0$. Notice that for such a symbol ${\bm a}(\cdot)$ the corresponding Toeplitz matrix $T_N({\bm a})$ is finitely banded. That is, there exists a finite band around the main diagonal such that outside that band the entries of $T_N({\bm a})$ are all zero. 



Next we move on to describe the limit of $L_{T_N({\bm a})}$ for ${\bm a}$ a Laurent polynomial. We need a few standard definitions and some notation. Let $\{\nu_N\}_{N \in \N}$ and $\nu$ be probability measures on $\C$. We say that $\nu_N$ converges weakly to $\nu$, to be denoted by $\nu_N \Lra \nu$, if $\int f(x) d\nu_N(x) \to \int f(x) d\nu(x)$, as $N \to \infty$, for every bounded continuous function $f: \C \mapsto \R$. If $\nu_N$'s are random, e.g.~the empirical measure of eigenvalues of random matrices, and $\int f(x) d\nu_N(x) \to \int f(x) d\nu(x)$ in probability, as $N \to \infty$, for any $f$ as above, then $\nu_N$ is said to converge weakly, in probability, to $\nu$, as $N \to \infty$. 

We also need the following useful notion of $\log$-potential of a probability measure. 

\begin{dfn}[Log-potential]
Let $\mathscr{P}(\C)$ be the set of probability measure on $\C$ that integrates the $\log|\cdot|$ in a neighborhood of infinity. For $\mu \in \mathscr{P}(C)$ we define its log-potential $\cL_\mu: \C \mapsto [-\infty, \infty)$ as follows:
\[
\mathcal{L}_\mu(z):= \int \log |z - x| d\mu(x), \qquad z \in \C.
\]
\end{dfn}
The usefulness of $\log$-potentials of probability measures is captured in the lemma below. 

\begin{lemma}\label{lem:cL-converges}
(i) Let $\{\mu_N\}_{N \in \N}$ be a sequence of probability measures in $\mathscr{P}(\C)$ such that 
\beq\label{eq:cL-converges}
\cL_{\mu_N}(z) \to \cL(z), \quad \text{ as } N \to \infty,
\eeq
for Lebesgue a.e.~every $z \in \C$ and some $\cL: \C \mapsto [-\infty, \infty)$, and 
\[
\sup_N \int |\lambda|^2 d\mu_N(\lambda) < \infty. 
\]
Then, there exists some $\mu \in \mathscr{P}(\C)$ such that $\cL_\mu(z) = \cL(z)$ for Lebesgue a.e.~$z \in \C$, $\mu = \f{1}{2\pi} \Delta \cL$ in the sense of distribution, where $\Delta$ denotes the two dimensional Laplacian, and $\mu_N \Lra \mu$, as $N \to \infty$. 

(ii) Let $\{\mu_N\}_{N \in \N}$ be a sequence of random probability measures in $\mathscr{P}(\C)$ such that \eqref{eq:cL-converges} holds in probability for Lebesgue a.e.~every $z \in \C$ and 
\beq\label{eq:bdd-in-prob}
\lim_{K \to \infty} \liminf_{N \to \infty} \P\left( \int |\lambda|^2 d\mu_N(\lambda) \leq K \right)=1.
\eeq
Then, $\mu_N \Lra \mu$, in probability, as $N \to \infty$, where $\mu$ is as given in part (i). 
\end{lemma}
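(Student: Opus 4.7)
The plan is to establish part (i) first through a tightness-plus-uniqueness argument, and then deduce part (ii) via the standard subsequence characterization of convergence in probability. For part (i), the uniform second moment bound yields tightness of $\{\mu_N\}$ by Markov's inequality, so Prokhorov's theorem furnishes, for any subsequence, a further subsequence with $\mu_{N_k} \Rightarrow \mu$ for some probability measure $\mu$. The second moment bound passes to $\mu$ by Fatou, so $\mu \in \mathscr{P}(\C)$. The crux is to show that any such subsequential limit satisfies $\mathcal{L}_\mu = \mathcal{L}$ Lebesgue a.e. Once that is done, the distributional identity $\mu = \tfrac{1}{2\pi}\Delta \mathcal{L}_\mu = \tfrac{1}{2\pi}\Delta \mathcal{L}$ pins down $\mu$ uniquely, all subsequential limits coincide, and the full sequence converges weakly to this common $\mu$.

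To identify $\mathcal{L}_\mu$ with $\mathcal{L}$ a.e., I would show that $\mathcal{L}_{\mu_{N_k}} \to \mathcal{L}_\mu$ in $L^1_{\mathrm{loc}}(\C, dz)$ along the subsequence. Uniform local $L^1$ bounds on $\mathcal{L}_{\mu_N}$ follow from Fubini and the estimate $\int_D |\log|z-x||\, dz \lesssim_D 1 + \log^+|x|$ on any bounded $D \subset \C$, combined with $\sup_N \int|\lambda|^2 d\mu_N < \infty$. To pass from weak convergence of $\mu_{N_k}$ to $L^1_{\mathrm{loc}}$ convergence of the associated log-potentials, I would test against $\varphi \in C_c^\infty(\C)$: by Fubini, $\int \mathcal{L}_{\mu_N}\varphi\, dz = \int (\varphi * \log|\cdot|)(x)\, d\mu_N(x)$, where $\varphi * \log|\cdot|$ is continuous on $\C$ with only logarithmic growth at infinity, and the tail contribution is controlled uniformly via the second moment bound (truncating $|x| > R$ and using $\mu_N(|x|>R) \lesssim R^{-2}$). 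Applying weak convergence to the bounded part and letting $R \to \infty$ gives $\int \mathcal{L}_{\mu_{N_k}}\varphi\, dz \to \int \mathcal{L}_\mu \varphi\, dz$. Since $\mathcal{L}_{\mu_{N_k}} \to \mathcal{L}$ a.e. and the $L^1_{\mathrm{loc}}$ bounds provide uniform integrability on compacts, the limits must agree, so $\mathcal{L}_\mu = \mathcal{L}$ a.e.

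For part (ii), recall that convergence in probability to a constant target in a Polish space (here, weak convergence) is equivalent to the property that every subsequence admits a further subsequence converging almost surely. Given any subsequence of $\{\mu_N\}$, combining the a.e.-in-$z$ in-probability convergence \eqref{eq:cL-converges} with Fubini and a diagonal extraction over a countable dense collection of $z$, I extract a sub-subsequence along which, on an event of full probability, $\mathcal{L}_{\mu_{N_{k_j}}}(z) \to \mathcal{L}(z)$ for Lebesgue a.e. $z$; by \eqref{eq:bdd-in-prob} and a further extraction one also secures the almost sure uniform second moment bound on this sub-subsequence. Applying part (i) sample-path-wise produces almost sure weak convergence to the $\mu$ constructed there, and hence convergence in probability.

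The main obstacle I anticipate is precisely the identification step in part (i): a.e.~pointwise convergence of $\mathcal{L}_{\mu_N}$ is, by itself, too weak to conclude anything about weak limits of $\mu_N$, since $\log|z-\cdot|$ is neither bounded nor continuous. One must simultaneously tame the logarithmic singularity at the diagonal (handled by $\log \in L^1_{\mathrm{loc}}$) and the logarithmic growth at infinity (handled by the uniform second moment bound) to upgrade a.e.~convergence to $L^1_{\mathrm{loc}}$ convergence compatible with weak convergence of measures. Once this analytic step is in place, the distributional formula $\mu = \tfrac{1}{2\pi}\Delta \mathcal{L}$ and the subsequence principle render the remainder of the argument routine.
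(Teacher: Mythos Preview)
Your approach is essentially what the paper indicates (it merely cites \cite[Remark~4.8]{BC12} for part~(i) and \cite[Lemma~3.1]{TVK10} together with Vitali's convergence theorem for part~(ii), leaving details to the reader), but two steps need correction.

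In part~(i), uniform $L^1_{\mathrm{loc}}$ bounds do \emph{not} by themselves yield uniform integrability on compacts. What you actually need---and what is available---is a uniform $L^p_{\mathrm{loc}}$ bound for some $p>1$: since $\sup_x \int_D |\log|z-x||^p\,dz<\infty$ for bounded $D$ and any $p<\infty$, Jensen and the second-moment hypothesis give $\sup_N \int_D |\mathcal L_{\mu_N}(z)|^p\,dz<\infty$, which does imply uniform integrability. The fix is immediate, but the sentence as written is false.

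In part~(ii), the extraction step is flawed: convergence along a countable dense collection of $z$'s says nothing about Lebesgue a.e.\ convergence, since countable sets are null, and there is no a~priori regularity in $z$ to interpolate. The correct argument---and the reason the paper singles out Vitali here---is as follows. Set $h_N(\omega,z):=|\mathcal L_{\mu_N}(z)-\mathcal L(z)|\wedge 1$; for a.e.\ $z$ one has $\E h_N(\cdot,z)\to 0$, so by bounded convergence and Fubini $\E\int_D h_N\,dz\to 0$ for every bounded $D$. Along any given subsequence extract a further one (diagonally over an exhaustion) with $\int_D h_{N_k}\,dz\to 0$ almost surely. This gives, almost surely, \emph{local convergence in measure} of $\mathcal L_{\mu_{N_k}}$ to $\mathcal L$---not pointwise a.e.\ convergence. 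Combined with the a.s.\ uniform integrability on compacts coming from \eqref{eq:bdd-in-prob} and the $L^p$ estimate above, Vitali's theorem upgrades this to a.s.\ $L^1_{\mathrm{loc}}$ convergence, after which the identification $\mathcal L_\mu=\mathcal L$ and the remainder of your argument go through unchanged.
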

Lemma \ref{lem:cL-converges}(i) is a direct consequence of \cite[Remark 4.8]{BC12}, {while Lemma \ref{lem:cL-converges}(ii) follows from an adaptation of the proofs of \cite[Remark 4.8]{BC12}, \cite[Lemma 3.1]{TVK10}, and Vitali's convergence theorem.} We leave the details to the reader. 

Lemma \ref{lem:cL-converges}(ii) will be used for $\mu_N=L_{A_N}$, where $A_N$ is an $N \times N$ dimensional random matrix. In all the cases to be considered in this article we will have $\limsup_{N \to \infty} N^{-1} \E[\|A_N\|_{{\rm HS}}^2] < \infty$. This bound together with Weyl's inequality (e.g.~\cite[Eqn.~(1.4)]{BC12}) imply that one has the probability bound in \eqref{eq:bdd-in-prob}. Therefore, to deduce that $L_{A_N}$ converges weakly, in probability to some probability measure it \corAB{will suffice} to show that $\cL_{L_{A_n}}(z) \to \cL(z)$, in probability, as $N \to \infty$, for some $\cL$ as in Lemma \ref{lem:cL-converges}.

Next let ${\bm a}$ be as in \eqref{eq:laurent-poly}. While dealing with the limiting spectral distribution of finitely banded Toeplitz matrices and their random perturbations, without loss of generality we may and will assume $N_\pm \ge 0$\footnote{If $N_- <0$ we modify the symbol by setting $a_{-N_--1}=\cdots=a_0=0$ and work with this modified symbol. If $N_+ < 0$ we work with $T_N({\bm a})^t$ so that the roles of $N_+$ and $N_-$ are now reversed, and modify the symbol associated to \corAB{$T_N({\bm a})^t$, if needed,} similarly in the other case.}.
For any $z \in \C$ set ${\bm a}_z(\cdot):= {\bm a}(\cdot) -z$. Further let
\beq\label{eq:zeta-dfn}
|\zeta_1(z)| \geq |\zeta_2(z)| \ge \cdots \ge |\zeta_{N_++N_-}(z)|
\eeq
be the roots of the polynomial $\zeta^{N_-} {\bm a}_z(\zeta)$ arranged in nonincreasing order of their magnitude. For ease in writing setting $\zeta_{N_++N_-+1}(z)=0$ for all $z \in \C$ define
\[
\Lambda({\bm a}):= \{z \in \C: |\zeta_{N_+}(z)| = |\zeta_{N_++1}(z)|\}. 
\]
We need one more notation: for any Laurent polynomial ${\bm b}$ and $\varrho \in (0,\infty)$ we let ${\bm b}^{\varrho}(\lambda):= {\bm b}(\varrho \lambda)$, for all $\lambda \in \C$. 
We are now ready to state the result on the limiting spectral distribution of $T_N({\bm a})$, for ${\bm a}$ a Laurent polynomial. 

\begin{proposition}[{\cite[\corAB{Lemmas 11.10 and 11.14, and Theorem 11.9}]{BG05}}]\label{prop:lsd-toep-non-perturbed}
Let ${\bm a}$ be as in \eqref{eq:laurent-poly}. Then, $L_{T_N({\bm a})} \Lra \wt\mu_{{\bm a}}$, as $N \to \infty$, where $\wt\mu_{{\bm a}} = \f{1}{2\pi} \Delta \cL_{{\bm a}}$ and for any $z \in \C\setminus \Lambda({\bm a})$,
\[
\cL_{{\bm a}}(z):= \f{1}{2\pi}\int_0^{2\pi} \log |{\bm a}^{\varrho(z)}(e^{{\rm i} \theta}) - z| d\theta, \qquad \text{ and } \qquad \varrho(z) \in \corAB{(|\zeta_{N_++1}(z)|, |\zeta_{N_+}(z)|)} =:\Gamma(z)
\]
(existence of such a $\varrho(z)$ is guaranteed by the definition of $\Lambda({\bm a})$ and the definition $\cL_{{\bm a}}(z)$ does not depend on any specific choice of $\varrho(z)$ as long as $\varrho(z) \in \Gamma(z)$). Further $\Lambda({\bm a})$ is a union of finite number of pairwise disjoint open analytic arcs and a finite number of points, and thus it is of zero (two-dimensional) Lebesgue measure. Moreover, $\wt{\mu}_{{\bm a}}$ is supported on $\Lambda({\bm a})$. 
\end{proposition}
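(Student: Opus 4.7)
The plan is to apply Lemma \ref{lem:cL-converges}(i) with $\mu_N = L_{T_N({\bm a})}$. Since ${\bm a}$ is a Laurent polynomial, $T_N({\bm a})$ is banded with bounded entries, so $N^{-1}\|T_N({\bm a})\|_{{\rm HS}}^2$ is bounded uniformly in $N$, which immediately gives the uniform second-moment bound. The bulk of the work is therefore to show that
\[
\cL_{L_{T_N({\bm a})}}(z) \;=\; \f{1}{N}\log\bigl|\det(T_N({\bm a})- zI)\bigr|
\;\longrightarrow\; \cL_{{\bm a}}(z)
\]
for Lebesgue-a.e.\ $z \in \C\setminus\Lambda({\bm a})$, with the prescribed limit.

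The key reduction is a similarity trick: with $D_\varrho:=\operatorname{diag}(\varrho,\varrho^2,\ldots,\varrho^N)$, a direct index computation gives $D_\varrho^{-1} T_N({\bm a}) D_\varrho = T_N({\bm a}^{1/\varrho})$, and consequently $\det(T_N({\bm a})-zI) = \det(T_N({\bm a}^{\varrho})-zI)$ for every $\varrho>0$. Fix now $z\in\C\setminus\Lambda({\bm a})$ and pick $\varrho(z)\in\Gamma(z)$. The winding number of the curve $\theta\mapsto {\bm a}^{\varrho(z)}(e^{{\rm i}\theta}) - z$ around the origin equals, by the argument principle applied to $\zeta\mapsto\zeta^{N_-}({\bm a}(\zeta)-z)$ on the circle $|\zeta|=\varrho(z)$, the quantity $\#\{j:|\zeta_j(z)|<\varrho(z)\}-N_-$, which by the definition of $\Gamma(z)$ equals $N_- - N_- = 0$. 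Thus ${\bm a}^{\varrho(z)}-z$ is a continuous (in fact Laurent-polynomial) symbol of index zero which, moreover, does not vanish on $\mathbb{S}^1$ for a.e.\ $z$. Szeg\H{o}'s first limit theorem (applicable precisely in this regime) then yields
\[
\f{1}{N}\log\bigl|\det\bigl(T_N({\bm a}^{\varrho(z)})-zI\bigr)\bigr|
\;\longrightarrow\;
\f{1}{2\pi}\int_0^{2\pi}\log\bigl|{\bm a}^{\varrho(z)}(e^{{\rm i}\theta})-z\bigr|\,d\theta
\;=\;\cL_{{\bm a}}(z).
\]
Combining with the determinantal identity above gives the pointwise convergence of log-potentials, and Lemma \ref{lem:cL-converges}(i) then delivers weak convergence to $\wt\mu_{{\bm a}} = \f{1}{2\pi}\Delta \cL_{{\bm a}}$.

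Independence of $\cL_{{\bm a}}(z)$ from the choice of $\varrho(z)\in\Gamma(z)$ is most cleanly verified by computing the inner integral via Jensen's formula. Writing $\zeta^{N_-}({\bm a}(\zeta)-z) = a_{N_+}\prod_{j=1}^{N_++N_-}(\zeta-\zeta_j(z))$ and integrating $\log|\varrho e^{{\rm i}\theta}-\zeta_j(z)|$ gives $\max(\log\varrho,\log|\zeta_j(z)|)$. The contributions from $j\le N_+$ give $\log|\zeta_j(z)|$ and those from $j>N_+$ give $\log\varrho$, and the $N_-\log\varrho$ terms cancel to produce
\[
\cL_{{\bm a}}(z) \;=\; \log|a_{N_+}| + \sum_{j=1}^{N_+}\log|\zeta_j(z)|,
\]
which depends only on the set of $N_+$ roots of largest modulus. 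Because these $N_+$ roots remain strictly separated in modulus from the remaining ones on $\C\setminus\Lambda({\bm a})$, they can be realized as algebraic functions whose product is (up to sign) a ratio of analytic functions of $z$; hence $\cL_{{\bm a}}$ is the log-modulus of a locally holomorphic function and therefore harmonic on $\C\setminus\Lambda({\bm a})$. This immediately yields $\operatorname{supp}(\wt\mu_{{\bm a}})\subseteq\Lambda({\bm a})$. Finally, $\Lambda({\bm a})$ is the zero set of the real-analytic (off a finite exceptional set) function $z\mapsto |\zeta_{N_+}(z)|^2-|\zeta_{N_++1}(z)|^2$, so standard desingularization of real-analytic varieties presents it as a finite union of analytic arcs together with finitely many points, and in particular it has zero Lebesgue measure.

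The main technical obstacle is the invocation of the Szeg\H{o} asymptotic itself: one must handle the set of exceptional $z$ for which ${\bm a}^{\varrho(z)}-z$ vanishes on $\mathbb{S}^1$ (i.e., $z\in{\bm a}(\varrho(z)\,\mathbb{S}^1)$). This set is one-dimensional in $\C$, hence Lebesgue-null, and can be excluded when verifying a.e.\ convergence of log-potentials; a mild continuity argument shows the excluded set does not obstruct the distributional identity $\wt\mu_{{\bm a}} = \f{1}{2\pi}\Delta \cL_{{\bm a}}$. A secondary subtlety is ensuring $\varrho(z)$ can be chosen measurably (indeed locally continuously) on each component of $\C\setminus\Lambda({\bm a})$, which follows from the continuity of the roots $\zeta_j(z)$ in $z$ combined with the strict separation defining $\Gamma(z)$.
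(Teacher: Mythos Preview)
The paper does not supply its own proof of this proposition; it is quoted from \cite[Lemmas~11.10 and~11.14, Theorem~11.9]{BG05} and used as input. So there is no in-paper argument to compare against directly.

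Your sketch is essentially correct and follows the classical route in \cite{BG05}: conjugating by $D_\varrho$ to replace ${\bm a}$ by ${\bm a}^{\varrho(z)}$ without changing the determinant, choosing $\varrho(z)\in\Gamma(z)$ so that the shifted symbol has winding number zero, and then applying the first Szeg\H{o} limit theorem to extract the limit of $N^{-1}\log|\det T_N({\bm a}_z)|$. The Jensen computation you give, yielding $\cL_{\bm a}(z)=\log|a_{N_+}|+\sum_{j=1}^{N_+}\log|\zeta_j(z)|$, is exactly the right way to see both the $\varrho$-independence and the harmonicity off $\Lambda({\bm a})$. The two places where you wave your hands (structure of $\Lambda({\bm a})$ as a real-analytic variety; measurable selection of $\varrho(z)$) are genuine but standard, and are precisely what \cite[Lemma~11.10]{BG05} handles.

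It is worth noting that the paper effectively re-derives the key limit later, by a different mechanism: Lemma~\ref{lem:widom} uses Widom's determinantal formula \eqref{eq:widom} to show directly that $|\det T_N({\bm a}_z)|\asymp |a_{N_+}|^N\prod_{j=1}^{N_+}|\zeta_j(z)|^N$ for $z\notin\wh\Gamma({\bm a})$, bypassing the similarity trick and Szeg\H{o} altogether. Your route trades the explicit combinatorics of Widom's formula for an appeal to a black-box Szeg\H{o} asymptotic; the Widom approach is more self-contained and, as the rest of the paper shows, more amenable to the perturbative analysis needed for the exponential-noise regime.
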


If $T_N({\bm a})$ is triangular then $\Lambda({\bm a})=\{a_0\}$ is singleton and $\wt{\mu}_{\bm a}$ is the Dirac measure at $a_0$. In the non-triangular setting there is no straightforward description of $\wt\mu_{{\bm a}}$. See \cite[Theorem 11.16]{BG05} for a description of $\wt \mu_{{\bm a}}$ in the finitely banded setting, and \cite[p.~160]{BS99} for the Laurent polynomial ${\bm a}(\lambda) = \lambda + \vep^2 \lambda^{-1}$, $\vep \in (0,1)$. For \abbr{LSD}'s of Toeplitz matrices beyond the finitely banded setting we refer the reader to \cite[Chapter 5]{BS99}. 

Unlike the normal case, where Szeg\H{o}’s first limit theorem (see \cite[Theorems 5.8 and 5.9]{BS99}) ties the \abbr{LSD} of finite dimensional Toeplitz matrices to their associated symbols (see \cite[Corollary 5.12]{BS99}), for non-normal Toeplitz matrices the \abbr{LSD} bears little to no relation to the spectrum of the infinite-dimensional Toeplitz operator.  We will see below that suitable random perturbations bridge this disconnect and yield eigenvalue distributions that align more closely with the spectrum of the underlying infinite dimensional operator.

\subsection{Limiting spectral distribution under random perturbation}
To study the limiting spectral distribution of Toeplitz matrices under additive random perturbations, as needed, we will impose several conditions on the noise matrix. Unless otherwise stated, it will be implicitly assumed that the entries of the noise matrix $E_N$ are jointly independent with zero mean. We work with various additional assumptions on the entries of $E_N$ which we state below. 
\begin{assumption}[Bound on expected Hilbert-Schmidt norm]\label{ass:hs}
$\E \|E_N\|_{{\rm HS}}^2 \lesssim N^2$. 
\end{assumption}

\begin{assumption}[Bound on expected spectral norm]\label{ass:spectral-norm}
$\E \|E_N\| \lesssim N^{1/2}$. 
\end{assumption}

\begin{assumption}[Bound on the smallest singular value]\label{ass:s-min}
For any $\kappa \in (0,\infty)$, there exists a $\beta=\beta(\kappa) \in (0,\infty)$ and $\vep_N(\kappa) \ll 1$ so that for any fixed deterministic matrix $\cM_N$ with $\|\cM_N\|  \lesssim N^\kappa$, we have
\[
\P\left(s_{\min}(E_N+ \cM_N) \le N^{-\beta} \right) \leq \vep_N(\kappa).  
\] 
\end{assumption}

\begin{assumption}[Anti concentration bound]\label{ass:anti-conc} 
For any complex valued random variable $X$ and $\vep >0$ we define its L\'evy concentration function as follows:
\beq\label{eq:gL}
\corAB{\gL(X,\vep):= \sup_{w \in C} \P(|X-w| \le \vep).}
\eeq
There exists $\upeta >0$ and some absolute constant $C< \infty$ such that 
\[
\limsup_{N \to \infty} \max_{i,j=1}^N \gL(E_{N}(i,j), \vep) \le C \vep^\upeta,
\]
for all $\vep >0$ sufficiently small, where $\{E_{N}(i,j)\}_{i,j=1}^N$ are the entries of $E_N$. 

\end{assumption}

\begin{assumption}[Bounds on moments]\label{ass:mom} $\limsup_{N \to \infty}\max_{i,j \in [N]} \E[|E_{N}(i,j)|^h] < \infty$ for all $h \in \N$. 

\end{assumption}

Assumption \ref{ass:hs} holds as soon as the entries of $E_N$ have a uniformly bounded second moment, while by \cite{L04}, Assumption \ref{ass:spectral-norm} holds under a finite fourth moment assumption. By \cite[Theorem 2.1]{TV08}, Assumption \ref{ass:s-min} holds whenever the entries of $E_N$ are i.i.d. (complex or real) with common $N$-independent distribution having a finite variance. On the other hand, it is straightforward to note that an existence of uniformly bounded densities of the entries of $E_N$ yield Assumption \ref{ass:anti-conc}.

Our goal is to understand spectral properties of $T_N({\bm a})+\updelta_N E_N$. If the coupling constant $\updelta_N \gg N^{-1/2}$, as
\beq\label{eq:T-op-norm}
\|T_N({\bm a})\| \le \|T({\bm a})\| = \|{\bm a}\|_{\infty, \mathbb{S}^1}, \qquad \text{ for any } {\bm a} \in L^\infty(\mathbb{S}^1)
\eeq
(see \cite[Eqn.~(1.14)]{BS99}), using Weyl's inequality for the singular values (e.g.~\cite[Eqn.~(1.6)]{BC12}), bounds on the smallest singular value (\cite[Lemma 4.12]{BC12}) and the intermediate singular values (\cite[Lemma 4.11]{BC12}), followed by an application of the standard replacement principle \cite[Theorem 2.1]{TVK10}, one finds that for any symbol ${\bm a} \in L^\infty(\mathbb{S}^1)$, the \abbr{LSD} of $T_N({\bm a})+\updelta_N E_N$, after a proper normalization, is the celebrated circular law. 

The rest of the ranges for $\updelta_N$ can be broadly split into two sub cases: (i) $\updelta_N \ll N^{-1/2}$ -- microscopic perturbation (under Assumption \ref{ass:spectral-norm} the perturbation $\updelta_N E_N$ is of vanishing spectral norm), and (ii) $\updelta_N \asymp N^{-1/2}$ -- macroscopic perturbation.

First we consider the case of microscopic perturbation. Below is a result on the \abbr{LSD} of random perturbations of Toeplitz matrices, which is a culmination of several works.

\begin{theorem}[Polynomially vanishing perturbation]\label{thm:sv-general}Let ${\bm a} \in L^\infty(\mathbb{S}^1)$ be such that $\sum_{n =-\infty}^\infty |n a_n| < \infty$. Assume that $E_N$ satisfies Assumptions \ref{ass:hs} and \ref{ass:s-min}. Let $\updelta_N \ll 1$ be such that $\log (\updelta_N^{-1} N^{-1/2}) \gg \log \log N$ and $\updelta_N \ge N^{-L}$. Then $L_{T_N({\bm a})+ \updelta_N E_N} \Lra {\bm \mu}_{{\bm a}({\bm u})}$, in probability, as $N \to \infty$. 
\end{theorem}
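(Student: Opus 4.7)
The plan is to verify the hypotheses of Lemma \ref{lem:cL-converges}(ii) for $\mu_N := L_{A_N}$ with $A_N := T_N({\bm a}) + \updelta_N E_N$. Since ${\bm u}$ is unitary and hence normal, so is ${\bm a}({\bm u})$, and its Brown measure coincides with the pushforward of the uniform measure on $\mathbb{S}^1$ under ${\bm a}$; the target log-potential is therefore $\cL(z) = \frac{1}{2\pi}\int_0^{2\pi}\log|{\bm a}(e^{{\rm i}\theta})-z|\,d\theta$. The second-moment condition \eqref{eq:bdd-in-prob} holds because $N^{-1}\|T_N({\bm a})\|_{\rm HS}^2 \leq \sum_n |a_n|^2 < \infty$ (the summability of $a_n$ following from $\sum|n a_n|<\infty$) and $N^{-1}\E[\|\updelta_N E_N\|_{\rm HS}^2] \lesssim \updelta_N^2 N = o(1)$ by Assumption \ref{ass:hs} and $\updelta_N \ll N^{-1/2}$. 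Writing $\cL_{L_{A_N}}(z) = N^{-1}\sum_j \log s_j(M_N(z))$ with $M_N(z) := T_N({\bm a}_z) + \updelta_N E_N$ (the sign is inconsequential for singular values), I would decompose at a threshold $\eta>0$:
\begin{equation*}
\cL_{L_{A_N}}(z) = \frac{1}{N}\sum_{j=1}^N \log\bigl(s_j(M_N(z)) \vee \eta\bigr) - \int_0^{\eta}\frac{\widehat N_z(t)}{t}\, dt,
\end{equation*}
with $\widehat N_z(t) := N^{-1}\bigl|\{j : s_j(M_N(z)) \leq t\}\bigr|$, via integration by parts.

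For the bulk, the Avram--Parter theorem (valid under $\sum|a_n|^2<\infty$, cf.~\cite[Chapter 5]{BS99}) asserts that the empirical singular value distribution of $T_N({\bm a}_z)$ converges weakly to $F_z$, the law of $|{\bm a}(e^{{\rm i}\theta})-z|$ for $\theta\sim{\rm Unif}[0,2\pi)$. Hoffman--Wielandt for singular values yields $N^{-1}\sum_j(s_j(M_N(z)) - s_j(T_N({\bm a}_z)))^2 \leq N^{-1}\updelta_N^2 \|E_N\|_{\rm HS}^2$, which tends to $0$ in probability by Assumption \ref{ass:hs}, Markov's inequality, and $\updelta_N^2 N = o(1)$. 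Hence the empirical singular value measure of $M_N(z)$ also converges weakly to $F_z$ in probability, and since $x\mapsto \log(x\vee \eta)$ is bounded continuous, $N^{-1}\sum_j \log(s_j\vee\eta) \to \frac{1}{2\pi}\int_0^{2\pi}\log(|{\bm a}(e^{{\rm i}\theta})-z|\vee \eta)\, d\theta$ in probability, which tends to $\cL(z)$ as $\eta\to 0$ by monotone convergence, for $z\notin {\bm a}(\mathbb{S}^1)$ (and for Lebesgue-a.e.~$z\in{\bm a}(\mathbb{S}^1)$ by the $C^1$ regularity of ${\bm a}$). For the tail, Assumption \ref{ass:s-min} applied with $\cM_N = \updelta_N^{-1}T_N({\bm a}_z)$---whose norm is bounded by $\updelta_N^{-1}(|z|+\|{\bm a}\|_\infty) \lesssim N^{L+1}$---gives $s_{\min}(M_N(z)) = \updelta_N\, s_{\min}(\cM_N + E_N) \geq \updelta_N N^{-\beta}$ with probability $1-o(1)$. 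Thus $\widehat N_z(t)=0$ for $t<\updelta_N N^{-\beta}$ w.h.p., and
\begin{equation*}
\int_0^{\eta}\frac{\widehat N_z(t)}{t}\,dt \leq \widehat N_z(\eta)\cdot \log\frac{\eta N^{\beta}}{\updelta_N} \leq C_{\eta,\beta}\, \widehat N_z(\eta)\log N,
\end{equation*}
using the lower bound $\updelta_N \geq N^{-L}$.

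The main obstacle is the calibration of $\eta$: for each fixed $\eta > 0$, $\widehat N_z(\eta)\to F_z(\eta)$ in probability, but $F_z(\eta)$ is strictly positive, so no fixed threshold can make the tail bound $\widehat N_z(\eta)\log N$ vanish. One must therefore select $\eta = \eta_N \to 0$, slowly enough for the weak-convergence estimate $\widehat N_z(\eta_N) \approx F_z(\eta_N)$ to remain valid along the sequence---which demands a quantitative Avram--Parter rate or an equivalent small-ball/concentration argument exploiting the randomness of $E_N$---yet fast enough that $F_z(\eta_N)\log N\to 0$. The polynomial decay $F_z(\eta)\lesssim \eta^c$ as $\eta\to 0$ holds for Lebesgue-a.e.~$z\in\C$ by the $C^1$ regularity of ${\bm a}$, making such a tradeoff feasible via a diagonal selection. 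Once this calibration is in place, $\cL_{L_{A_N}}(z) \to \cL(z)$ in probability for a.e.~$z$, and Lemma \ref{lem:cL-converges}(ii) delivers the claim. The essential technical content, refined across \cite{GWZ,W} and subsequent works, is exactly this quantitative control on the empirical count of small singular values of $M_N(z)$, which forces Assumption \ref{ass:s-min} to be invoked not only for the smallest singular value but, in effect, for compressions of $M_N(z)$ that detect the entire small-singular-value cluster.
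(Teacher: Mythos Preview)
Your framework---log-potential, singular-value truncation, Avram--Parter for the bulk, Assumption~\ref{ass:s-min} for $s_{\min}$---is the right one and matches what the paper carries out in detail for the closely related Theorem~\ref{thm:lsd-cont}. But you have misidentified the obstacle. For $z\notin{\bm a}(\mathbb{S}^1)$ one has $|{\bm a}(e^{{\rm i}\theta})-z|\ge{\rm dist}(z,{\bm a}(\mathbb{S}^1))>0$, so $F_z$ is supported away from zero and $F_z(\eta)=0$ for all small $\eta$; your claim that ``$F_z(\eta)$ is strictly positive'' is simply false for the $z$ that matter (and ${\bm a}(\mathbb{S}^1)$ has Lebesgue measure zero under the $C^1$ hypothesis $\sum|na_n|<\infty$). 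There is therefore no need to send $\eta\to 0$, and no quantitative Avram--Parter rate enters.

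The actual gap is that weak convergence gives only $\widehat N_z(\eta)\to 0$ with no rate, so $\widehat N_z(\eta)\log N$ is not obviously small. The missing input is a deterministic fact from Toeplitz operator theory: since $T({\bm a}_z)$ is Fredholm of index $-d(z)$ with $d(z)={\rm wind}({\bm a},z)$, one has $\liminf_N s_{|d(z)|+1}(T_N({\bm a}_z))>0$ by \cite[Theorem~1.17 and Proposition~4.7]{BS99}, i.e.\ at most $|d(z)|=O(1)$ singular values of the \emph{unperturbed} matrix lie below a fixed $\eta=\eta(z)$. Hoffman--Wielandt together with Assumption~\ref{ass:hs} then shows that at most $|d(z)|+O_\eta(\updelta_N^2\|E_N\|_{\rm HS}^2)=|d(z)|+O(\updelta_N^2 N^2)$ singular values of $M_N(z)$ lie below $\eta/2$, whence $\widehat N_z(\eta/2)\log N\lesssim(|d(z)|/N+\updelta_N^2 N)\log N\to 0$ under the stated hypothesis on $\updelta_N$; compare \eqref{eq:log-conv-4}. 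Assumption~\ref{ass:s-min} is invoked only once, for $s_{\min}$---no compressions or anything of the sort appear. The paper's own route to Theorem~\ref{thm:sv-general} goes through \cite{SV1} combined with the replacement principle \cite[Theorem~1.8]{BPZ2}, but the same Fredholm-index count is what feeds the Grushin construction there as well.
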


Recall that ${\bm \mu}_{{\bm a}({{\bm u}})}$ is the Brown measure of the operator ${\bm a}({\bm u})$, where ${\bm u}$ is the Haar unitary operator. In other words, ${\bm \mu}_{{\bm a}({\bm u})}$ is push forward of the uniform measure on $\mathbb{S}^1$ by the symbol ${\bm a}$.

In \cite{BPZ} Theorem \ref{thm:sv-general} was proved for triangular finitely banded Toeplitz matrices under the additional assumption that the entries of $E_N$ are complex Gaussians. Using a different approach the work \cite{BPZ2} extended it for the non-triangular finitely banded case. On the other hand, using yet another different approach, \cite{SV} treated the finitely banded case, which was later extended in \cite{SV1} for ${\bm a}$ as in Theorem \ref{thm:sv-general}. 
It should be mentioned that \cite{SV1} works with the assumption that the entries of $E_N$ are complex Gaussian, requires a more stringent upper bound $\updelta_N$, while allowing \corAB{for $\updelta_N \ge \exp(-N^\delta)$} for any $\delta \in (0,1)$. A relaxation on the distributional assumption on the entries of $E_N$ as well as the upper bound on $\updelta_N$ is achieved by using a refinement of the replacement principle \cite[Theorem 1.8]{BPZ2}. While \cite[Theorem 1.8]{BPZ2} requires $\updelta_N \le N^{-\gamma}$ for some $\gamma >1/2$, a close inspection of its proof reveals that the same proof works as long as $\log(N^{-1/2} \updelta_N^{-1}) \gg \log \log N$.  

One expects Theorem \ref{thm:sv-general} to hold as soon as $\updelta_N \ll N^{-1/2}$. The sub optimality arises from using the trivial bound $|{\rm Im} \, G(z)| \le 1/({\rm Im}\, z)$, for $z \in \C_+$, where $G(\cdot)$ is the resolvent of the Hermitized version of $T_N({\bm a})+\updelta_N E_N$ in the proof of \cite[Theorem 1.8]{BPZ2}. If one is able to prove that the resolvent remains bounded even if ${\rm Im} \,z \asymp N^{-\delta}$, for some $\delta \in (0,1)$, one may be able remove this sub optimal upper bound on $\updelta_N$.

Let us further add that the requirement that $\updelta_N \ge N^{-L}$ stems from the use of Assumption \ref{ass:s-min}. By imposing the additional assumption that the entries of $E_N$ are real valued and have $\log$-concave densities, and using \cite{T20} one can relax the lower bound on $\updelta_N$ allowing for \corAB{$\updelta_N \ge \exp(-N^{\delta})$} for any $\delta \in (0,1)$.

We complement Theorem \ref{thm:sv-general} with the following extension that allows a large class of continuous symbol.  

\begin{theorem}[Sub exponential perturbation and continuous symbol]\label{thm:lsd-cont}
Let ${\bm a} \in C(\mathbb{S}^1)$ be such that ${\bm a}(\mathbb{S}^1)$  has a zero two dimensional Lebesgue measure. Let $\updelta_N \ll N^{-1/2}$.
\begin{enumerate}
\item[(i)] Assume that $\{E_N\}_{N \in \N}$ satisfy Assumptions \ref{ass:hs}, \ref{ass:spectral-norm}, and \ref{ass:s-min}. Then, for any $\updelta_N$ such that $\updelta_N \ge N^{-L}$ for some $L >1/2$, $L_{T_N({\bm a})+\updelta_N E_N}\Lra {\bm \mu}_{{\bm a}({\bm u})}$, in probability, as $N \to \infty$. 
\item[(ii)] Let the entries of $E_N$ be real valued, have unit variance, and have $\log$-concave densities with respect to the Lebesgue measure. Then, for any $\updelta_N$ such that $\log (1/\updelta_N) \ll N$, $L_{T_N({\bm a})+\updelta_N E_N}\Lra {\bm \mu}_{{\bm a}({\bm u})}$, in probability, as $N \to \infty$.
\end{enumerate}
\end{theorem}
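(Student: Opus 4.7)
The plan is to apply Lemma \ref{lem:cL-converges}(ii) to $\mu_N = L_{M_N}$ with $M_N := T_N(\bm{a}) + \updelta_N E_N$; the tightness condition \eqref{eq:bdd-in-prob} follows from Assumption \ref{ass:hs} and the bound $\|T_N(\bm{a})\|_{\rm HS}^2 \le N \|\bm{a}\|_{\infty,\mathbb{S}^1}^2$ coming from \eqref{eq:T-op-norm}. It therefore suffices to prove that for Lebesgue a.e.\ $z \in \mathbb{C}$,
\[
\mathcal{L}_{L_{M_N}}(z) = \frac{1}{N} \sum_{i=1}^N \log s_i(M_N - z\,{\rm Id}_N) \longrightarrow \frac{1}{2\pi}\int_0^{2\pi} \log|z - \bm{a}(e^{{\rm i}\theta})|\, d\theta = \mathcal{L}_{\bm{\mu}_{\bm{a}(\bm{u})}}(z)
\]
in probability; I restrict to $z \in \mathbb{C}\setminus \bm{a}(\mathbb{S}^1)$, which has full Lebesgue measure by hypothesis.

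The main device is Laurent-polynomial approximation. I would pick Fej\'er means $\bm{a}^{(m)}$ of $\bm{a}$, so $\epsilon_m := \|\bm{a} - \bm{a}^{(m)}\|_{\infty,\mathbb{S}^1} \to 0$, each $\bm{a}^{(m)}$ is a Laurent polynomial satisfying the summability hypothesis of Theorem \ref{thm:sv-general}, and eventually $z \notin \bm{a}^{(m)}(\mathbb{S}^1)$. Set $Q_N^{(m)} := T_N(\bm{a}^{(m)}) + \updelta_N E_N$; then $M_N - Q_N^{(m)} = T_N(\bm{a} - \bm{a}^{(m)})$ has operator norm at most $\epsilon_m$. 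For each fixed $m$, Theorem \ref{thm:sv-general} gives $\mathcal{L}_{L_{Q_N^{(m)}}}(z) \to \mathcal{L}_{\bm{\mu}_{\bm{a}^{(m)}(\bm{u})}}(z)$ in probability as $N \to \infty$. Uniform convergence $\bm{a}^{(m)} \to \bm{a}$ combined with $z$ being bounded away from $\bm{a}(\mathbb{S}^1)$ yields $\mathcal{L}_{\bm{\mu}_{\bm{a}^{(m)}(\bm{u})}}(z) = \frac{1}{2\pi}\int_0^{2\pi} \log|z - \bm{a}^{(m)}(e^{{\rm i}\theta})|\,d\theta \to \mathcal{L}_{\bm{\mu}_{\bm{a}(\bm{u})}}(z)$ as $m \to \infty$ by dominated convergence.

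The core step is to bound $\Delta_N^{(m)}(z) := \mathcal{L}_{L_{M_N}}(z) - \mathcal{L}_{L_{Q_N^{(m)}}}(z) = \frac{1}{N}\sum_i\bigl(\log s_i(M_N - z\,{\rm Id}_N) - \log s_i(Q_N^{(m)} - z\,{\rm Id}_N)\bigr)$. Weyl's inequality gives $|s_i(M_N - z\,{\rm Id}_N) - s_i(Q_N^{(m)} - z\,{\rm Id}_N)| \leq \epsilon_m$. I would split by the size of $s_i(Q_N^{(m)} - z\,{\rm Id}_N)$ relative to $\sqrt{\epsilon_m}$: the indices with $s_i(Q_N^{(m)} - z\,{\rm Id}_N) \geq \sqrt{\epsilon_m}$ contribute at most $O(\sqrt{\epsilon_m})$ per term via the Lipschitz estimate on $\log$. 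For the remaining indices, writing $Q_N^{(m)} - z\,{\rm Id}_N = \updelta_N\bigl(E_N + \updelta_N^{-1}(T_N(\bm{a}^{(m)}) - z\,{\rm Id}_N)\bigr)$ and invoking Assumption \ref{ass:s-min} with $\kappa$ a constant depending on $L$ (valid since $\updelta_N^{-1} \leq N^L$ and $\|T_N(\bm{a}^{(m)}) - z\,{\rm Id}_N\| = O(1)$), I obtain $s_{\min}(Q_N^{(m)} - z\,{\rm Id}_N) \geq \updelta_N N^{-\beta}$ with probability $1-o(1)$; the analogous bound holds for $M_N$. Each summand in the small-regime is therefore $O((L + \beta)\log N)$ in absolute value.

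The main obstacle is a uniform bound on the number of small singular values, $\#\{i : s_i(Q_N^{(m)} - z\,{\rm Id}_N) \leq \sqrt{\epsilon_m}\} \leq C N \sqrt{\epsilon_m}$ with high probability, for an absolute constant $C$ independent of $m$. This is equivalent to a control on the density of states near zero of the Hermitization $H_N^{(m)}(z) = \begin{pmatrix} 0 & Q_N^{(m)} - z\,{\rm Id}_N \\ (Q_N^{(m)} - z\,{\rm Id}_N)^* & 0 \end{pmatrix}$, since $\#\{i : s_i \leq \eta\} \leq 2\eta \cdot {\rm Im}\,{\rm tr}\,(H_N^{(m)}(z) - {\rm i}\eta\,{\rm Id}_{2N})^{-1}$. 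The proofs of Theorem \ref{thm:sv-general} in \cite{BPZ2, SV1} develop precisely such resolvent control, and I would extract from them a bound $N^{-1}\mathbb{E}\,{\rm Im}\,{\rm tr}\,(H_N^{(m)}(z) - {\rm i}\eta\,{\rm Id}_{2N})^{-1} \lesssim 1$ valid down to $\eta \gtrsim \updelta_N N^{-\beta}$ and uniform in $\bm{a}^{(m)}$. Combining with the small-regime contribution gives $|\Delta_N^{(m)}(z)| \lesssim \sqrt{\epsilon_m}\,(1 + \log N)$, and choosing $m = m_N \to \infty$ slowly enough that $\epsilon_{m_N}\log^2 N \to 0$ closes part (i) via a diagonal argument. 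For part (ii), the same scheme applies with Assumption \ref{ass:s-min} replaced by the log-concave smallest-singular-value bound of \cite{T20}, which tolerates deterministic perturbations of norm up to $\exp(N^{o(1)})$ and hence allows $\updelta_N$ as small as $\exp(-N^{o(1)})$ without any essential change to the rest of the argument.
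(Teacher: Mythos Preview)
Your approach via Fej\'er approximation and Theorem~\ref{thm:sv-general} is genuinely different from the paper's, but it has a real gap and, separately, it cannot recover the full range of~$\updelta_N$.

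The gap is the Wegner-type bound $\#\{i : s_i(Q_N^{(m)} - z\,{\rm Id}_N) \le \sqrt{\epsilon_m}\} \le C N \sqrt{\epsilon_m}$, which you propose to obtain from a resolvent estimate $N^{-1}\,{\rm Im}\,{\rm tr}\,(H_N^{(m)}(z) - {\rm i}\eta)^{-1} \lesssim 1$ extracted from \cite{BPZ2, SV1}. That bound is not established in those references; the paper states explicitly (just after Theorem~\ref{thm:sv-general}) that the proof of \cite[Theorem~1.8]{BPZ2} uses only the trivial bound $|{\rm Im}\,G| \le 1/{\rm Im}\,z$, and that removing this is exactly the obstruction to the optimal range of~$\updelta_N$. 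So the core step of your argument rests on an input that is not available. Even if it were, reducing to Theorem~\ref{thm:sv-general} still loses range: Theorem~\ref{thm:sv-general} needs $\log(\updelta_N^{-1}N^{-1/2}) \gg \log\log N$, whereas Theorem~\ref{thm:lsd-cont}(i) only asks $\updelta_N \ll N^{-1/2}$; and for part~(ii) the route through \cite{SV1}/\cite{T20}-enhanced Theorem~\ref{thm:sv-general} reaches only $\updelta_N \ge \exp(-N^\delta)$ for $\delta<1$, not the full $\log(1/\updelta_N) \ll N$.

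The paper's proof bypasses both the approximation step and any local law. It works directly with the continuous symbol: the Avram--Parter theorem gives weak convergence of the empirical singular-value measure of $T_N(\bm{a}_z)+\updelta_N E_N$ to the law of $|\bm{a}(\bm{u})-z|$, which handles $\int_\varepsilon^R \log x\,d\nu_N$. The key structural input replacing your Wegner estimate is the Fredholm-index fact (from \cite[Theorem~1.17 and Proposition~4.7]{BS99}) that for $z \notin \bm{a}(\mathbb{S}^1)$ with $k={\rm wind}(\bm{a},z)$ one has $\liminf_N s_{|k|+1}(T_N(\bm{a}_z)) > 0$: at most $|k|=O(1)$ singular values are small, not $O(N\sqrt{\epsilon_m})$. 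Combined with the $s_{\min}$ lower bound from Assumption~\ref{ass:s-min} (or \cite{T20} for part~(ii)), the small-singular-value contribution is $\le (|k|/N)\log(N/\updelta_N)$, which vanishes precisely under $\log(1/\updelta_N) \ll N$. This is both simpler and sharper than your route; if you want to salvage your scheme, replace the Wegner estimate by this $O(1)$ count of small singular values --- but then the Fej\'er approximation and the appeal to Theorem~\ref{thm:sv-general} become unnecessary.
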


We expect Theorem \ref{thm:lsd-cont} to hold only under Assumption \ref{ass:hs} and some anti-concentration bound on the entries of $E_N$. In Section \ref{sec:proof-lsd} we present a simple proof of Theorem \ref{thm:lsd-cont} that bypasses the machineries employed in \cite{BPZ2, SV, SV1}. However, it should be emphasized that those machineries have their own advantages. For example, \cite{SV, SV1} provides quantitative bound on the difference between the proportion of eigenvalues in any given arc of the symbol curve ${\bm a}(\mathbb{S}^1)$ and their limiting value. On the other hand, the ideas emerged from \cite{BPZ2} have been useful to study outlier distribution and to identify zones of `forbidden' regions around the symbol curve. Further, it will be used \corAB{in this article} to study the \abbr{LSD} under an exponentially vanishing random perturbation. The strategy employed in the proof of Theorem \ref{thm:lsd-cont} does not seem to be suitable in this latter setting. 

Next we move to state the result on the \abbr{LSD} under an exponentially vanishing random perturbation. This (and later for a description of the limit laws \corAB{of} outliers) requires to split the complex plane into several regions. Let ${\bm a}$ be as in \eqref{eq:laurent-poly}. For any $d \in \Z$ we let 
\beq\label{eq:cS}
\cS_d:=\left\{z \in \C \setminus {\bm a}(\mathbb{S}^1): {\rm wind}({\bm a}, z) =d\right\}.
\eeq
For any $z \notin {\bm a}(\mathbb{S}^1)$, letting $m_+=m_+(z)$ and $m_-=m_-(z)$ to be the number of roots of the polynomial $\zeta \mapsto \zeta^{N_-}{\bm a}_z(\zeta)$ that are greater then and less than one in modulus, respectively, we observe that ${\rm wind}({\bm a}, z)= N_+ - m_+(z)= m_-(z) - N_-$. 
Now for $d > 0$, $r \in (0,1)$, and $\ell =0,1,2,\ldots, m_-$, we further let 
\beq\label{eq:cS-exp-1}
\cS_{d, \ell}^r:= \left\{ z \in \cS_d\setminus {\bm a}(r \mathbb{S}^1): |\zeta_{m_++\ell}(z)| > r > |\zeta_{m_++\ell+1}(z)| \right\},
\eeq
where for ease in writing we set $\zeta_0(z)=\infty$. For $d < 0$, $r \in (0,1)$, and $\ell=0,1,2,\ldots, m_+$, define 
\beq\label{eq:cS-exp-2}
\cS_{d, \ell}^r:= \left\{ z \in \cS_d\setminus {\bm a}(r^{-1} \mathbb{S}^1): |\zeta_{\ell}(z)| > r^{-1} > |\zeta_{\ell+1}(z)| \right\}.
\eeq
Set
\beq\label{eq:cL-a-r}
\cL_{{\bm a}, r}(z):= \left\{
\begin{array}{ll}
\log |a_{N_+}| + \sum_{j=1}^{m_++ \ell \wedge d} \log |\zeta_j(z)|  + (d-\ell)_+ \log r, & \mbox{if } d > 0 \mbox{ and } z \in \cS_{d,\ell}^r,\\
\log |a_{N_+}| + \sum_{j=1}^{\ell \vee N_+} \log |\zeta_j(z)|  + (\ell-N_+)_+ \log r, & \mbox{if } d < 0 \mbox{ and } z \in \cS_{d,\ell}^r,\\
\log |a_{N_+}| + \sum_{j=1}^{m_+} \log |\zeta_j(z)|, & \mbox{if } z \in \cS_{0}.
\end{array}
\right.
\eeq
As 
\beq\label{eq:bar-cS}
\bar \cS:=\C\setminus ({\bm a}(\mathbb{S}^1) \cup {\bm a}(r\mathbb{S}^1) \cup {\bm a}(r^{-1}\mathbb{S}^1))= \cS_0 \cup_{d >0} \left(\cup_{\ell=0}^{m_-} \cS_{d,\ell}^r \right) \cup_{d <0} \left(\cup_{\ell=0}^{m_+} \cS_{d,\ell}^r \right),
\eeq
\eqref{eq:cL-a-r} defines $\cL_{{\bm a}, r}$ outside a set of zero Lebesgue measure. The next result shows that this is the $\log$-potential of the \abbr{LSD} of a finitely banded Toeplitz matrix under an exponentially vanishing random perturbation. 

Let us note in passing that setting $r =1$ in \eqref{eq:cS-exp-1}-\eqref{eq:cL-a-r} we get that $\cS_{d,0}^1=\cS_d$ for $d >0$, $\cS_{d,m_+}^1=\cS_d$ for $d <0$, $\cS_{d,\ell}^1 =\emptyset$ for other choices of $\ell$, and
\[
\cL_{{\bm a}, 1}(z)= \log |a_{N_+}| + \sum_{j=1}^{m_+} \log |\zeta_j(z)|, \qquad \text{ for } z \notin {\bm a}(\mathbb{S}^1).
\]
The reader may note that $\cL_{{\bm a}, 1}$ equals the $\log$-potential \corAB{of ${\bm \mu}_{{\bm a}({\bm u})}$, the limit measure for} the sub-exponential random perturbation of a finitely banded  Toeplitz matrix. It should be further pointed out that in the triangular setting, say $N_-=0$, $\cL_{{\bm a}, r}$ is the $\log$-potential of ${\bm \mu}_{{\bm a}(r {\bm u})}$. For the non-triangular setting there does not seem to be a direct relation between these two. 

\begin{theorem}[Exponentially vanishing random perturbation]\label{thm:lsd-exp}
Let ${\bm a}$ be as in \eqref{eq:laurent-poly} and $E_N$ satisfies Assumption \ref{ass:anti-conc}. Further assume that the entries of $E_N$ are centered and have uniformly bounded second moment. Fix $r \in (0,1)$. Then, $L_{T_N({\bm a})+r^N E_N} \Lra \mu_{{\bm a}, r}$, in probability, as $N \to \infty$, where $\mu_{{\bm a}, r} = \f{1}{2\pi} \Delta \cL_{{\bm a}, r}$. 
\end{theorem}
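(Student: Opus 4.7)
My plan is to verify the hypotheses of Lemma~\ref{lem:cL-converges}(ii) for $\mu_N = L_{T_N({\bm a}) + r^N E_N}$. The tightness condition \eqref{eq:bdd-in-prob} is immediate from $\|T_N({\bm a})\| \le \|{\bm a}\|_{\infty,\mathbb{S}^1}$ (see \eqref{eq:T-op-norm}) together with the uniform second-moment bound on the entries of $E_N$ (which yields Assumption~\ref{ass:hs}), so the substance of the proof is to show that for Lebesgue a.e.\ $z \in \bar\cS$,
\[
\frac{1}{N}\log\bigl|\det(T_N({\bm a}_z) + r^N E_N)\bigr| \; \Pto \; \cL_{{\bm a}, r}(z).
\]
The first step is to describe the singular spectrum of $T_N({\bm a}_z)$ on the stratum $\cS_{d,\ell}^r$. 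Using the factorization $\zeta^{N_-}{\bm a}_z(\zeta) = a_{N_+}\prod_{j=1}^{N_++N_-}(\zeta-\zeta_j(z))$ and a Wiener--Hopf-type splitting of the roots by modulus, $T_N({\bm a}_z)$ carries $N - |d|$ macroscopic singular values, whose log-average is governed by the Szeg\H{o}--Widom asymptotics underlying Proposition~\ref{prop:lsd-toep-non-perturbed} and converges to $\log|a_{N_+}| + \sum_{j=1}^{m_+}\log|\zeta_j(z)|$, together with $|d|$ exponentially small singular values whose decay rates are the moduli of the $|d|$ ``critical'' roots $\zeta_{m_++1}(z),\ldots,\zeta_{m_++d}(z)$ (with the symmetric statement via $T_N({\bm a})^t$ when $d<0$). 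Each small singular value comes equipped with approximate left/right singular vectors that are localized, up to geometric tails, on $O(1)$-sized windows at the top or bottom of the index set; producing these vectors in the finitely banded non-triangular setting is the analog of the Widom-type diagonalization used in \cite{BPZ2} and constitutes the first main technical step.

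With this structure in place, the upper bound follows from Weyl's inequality $\sigma_i(A+B) \le \sigma_i(A) + \|B\|$ and the exponentially loose bound $\|r^N E_N\| \ll 1$ (Assumption~\ref{ass:hs} alone suffices, since the polynomial-in-$N$ control on $\|E_N\|$ is killed by $r^N$). Summing $\tfrac{1}{N}\sum_i \log \max(\sigma_i(T_N({\bm a}_z)), r^N)$ over the three natural strata of singular values---macroscopic, exponentially small with rate above $r^N$, and exponentially small with rate below $r^N$---produces exactly $\cL_{{\bm a},r}(z)$, with the definition of $\cS_{d,\ell}^r$ in \eqref{eq:cS-exp-1}--\eqref{eq:cS-exp-2} tracking how many of the $|d|$ critical rates $|\zeta_j(z)|^N$ lie above the noise floor $r^N$ (contributing $\log|\zeta_j(z)|$ each) versus below (each promoted to $\log r$, yielding the $(d-\ell)_+\log r$ correction in \eqref{eq:cL-a-r}).

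The matching lower bound is the main obstacle, and I would follow the approach of \cite{BPZ2}. Projecting onto the $|d|$-dimensional approximate singular subspaces from the first step and performing a Schur-complement decomposition reduces the problem to controlling $s_{\min}(\mathcal{E}_N(z))$ for an $O(1)\times O(1)$ effective matrix whose entries are bilinear forms $\langle u_i, E_N v_j\rangle$ in the approximate singular vectors. Because the $u_i$'s and $v_j$'s are localized and essentially pairwise orthogonal, these entries inherit anti-concentration from Assumption~\ref{ass:anti-conc}, so a standard small-ball/Carbery--Wright argument yields $s_{\min}(\mathcal{E}_N(z)) \ge N^{-C}$ with probability $1-o(1)$. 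Combined with deterministic Szeg\H{o}--Widom control on the macroscopic Schur complement, this produces the lower bound $\cL_{{\bm a},r}(z) - o(1)$, and Lemma~\ref{lem:cL-converges}(ii) then delivers the conclusion. The delicate point---the technical heart of the argument---is that the ``promotion'' of sub-$r^N$ singular values to the scale $r^N$ must be executed uniformly across the strata $\cS_{d,\ell}^r$, which is exactly why \eqref{eq:cL-a-r} takes its piecewise form; getting this uniform control right is where I expect most of the work to go.
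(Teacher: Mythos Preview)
Your approach is different from the paper's. The paper does not analyze the singular-value structure of $T_N({\bm a}_z)$ or set up a Grushin/Schur-complement reduction for this theorem (that machinery appears in Section~\ref{sec:grushin} but is deployed for Theorem~\ref{thm:VZ} and the eigenvector results, not here). Instead it uses the determinant expansion $\det(T_N({\bm a}_z)+r^N E_N)=\sum_{k=0}^N {\det}_k(z)$ of \eqref{eq:det_decomposition}--\eqref{eq:det-k} and identifies, on each stratum $\cS_{d,\ell}^r$, a dominant index $k^\star=(d-\ell)_+$ for $d>0$ (and symmetrically for $d<0$). Second-moment bounds on the non-dominant ${\det}_k$ are obtained from the combinatorial Lemmas~\ref{lem:combinatorial-1}--\ref{lem:combin-2} (packaged as Lemma~\ref{lem:ubound-detk}), while the lower bound on $|{\det}_{k^\star}|$ comes from the anti-concentration Lemma~\ref{prop:anti-conc} after isolating one large coefficient via Widom's formula (Lemma~\ref{lem:lower-bd-anti-conc}). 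The crucial structural point is that each ${\det}_k(z)$ is automatically \emph{multilinear} in the entries of $E_N$---it is a linear combination of $k\times k$ sub-determinants of $E_N$---so Lemma~\ref{prop:anti-conc} applies under Assumption~\ref{ass:anti-conc} alone.

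Your lower-bound step has a real gap exactly here. The entries of your $|d|\times|d|$ effective matrix are bilinear forms $\langle f_i, E_N e_j\rangle$, and the approximate singular vectors are indeed localized near the ends of $[N]$. But for $d>0$ all the right vectors $e_1,\ldots,e_d$ live near the \emph{same} end (they are essentially in the span of the pure states $(1,\zeta_{m_++j},\zeta_{m_++j}^2,\ldots)$ with $|\zeta_{m_++j}|<1$), and likewise the $f_i$'s cluster at the opposite end. Hence every entry of $\mathcal{E}_N(z)$ depends on the same $O(1)$-sized corner of $E_N$, and $\det\mathcal{E}_N(z)$ is a degree-$|d|$ polynomial that is \emph{not} multilinear in the entries of $E_N$ (squares $E_N(k,l)^2$ occur). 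Lemma~\ref{prop:anti-conc} does not cover this, and Carbery--Wright would need log-concavity, which is strictly stronger than Assumption~\ref{ass:anti-conc}. Separately, both your upper and lower bounds hinge on the assertion that the $|d|$ small singular values of $T_N({\bm a}_z)$ have \emph{individual} exponential rates $|\zeta_{m_++j}(z)|^N$; this is plausible off $\wh\Gamma({\bm a})$ but is itself a nontrivial result you would have to establish, whereas the paper's route needs only the product formula of Lemma~\ref{lem:widom} and never unpacks individual rates.
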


Theorems \ref{thm:lsd-cont} and \ref{thm:lsd-exp} leave the case of super exponentially vanishing random perturbation. The following result shows that a super exponentially vanishing random perturbation of a finitely banded Toeplitz matrix has no effect on its \abbr{LSD}. Its proof is straightforward. 

\begin{theorem}[Super exponentially vanishing random perturbation]\label{thm:toep-super-exp}
Let $\delta_N$ be such that $N^{-1}\log(1/\delta_N) \gg 1$. Assume that $E_N$ satisfies Assumption \ref{ass:hs} and ${\bm a}$ is a Laurent polynomial. Then $L_{T_n({\bm a})+\updelta_N E_N} \Lra \wt \mu_{{\bm a}}$, in probability, as $N \to \infty$, where $\wt\mu_{{\bm a}}$ is as in Proposition \ref{prop:lsd-toep-non-perturbed}. 
\end{theorem}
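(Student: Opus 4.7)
My plan is to verify the conclusion at the level of $\log$-potentials via Lemma \ref{lem:cL-converges}(ii); the moment bound \eqref{eq:bdd-in-prob} is immediate from Assumption \ref{ass:hs} together with \eqref{eq:T-op-norm}. As a by-product of its proof (via the explicit Widom-type formula for $|\det(z\Id_N - T_N({\bm a}))|$), Proposition \ref{prop:lsd-toep-non-perturbed} in fact yields the pointwise convergence $\cL_{L_{T_N({\bm a})}}(z) \to \cL_{\wt\mu_{{\bm a}}}(z)$ for Lebesgue a.e.~$z$. Writing $B_N(z) := z\Id_N - T_N({\bm a})$, invertible for all $z$ outside the countable set $\bigcup_N \sigma(T_N({\bm a}))$, the theorem therefore reduces to showing that
\[
\f{1}{N}\Bigl|\log|\det(\Id_N - \updelta_N B_N(z)^{-1} E_N)|\Bigr| \Pto 0,
\qquad \text{for Lebesgue a.e.~}z.
\]

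The first auxiliary step is an elementary eigenvalue bound: for any matrix $X$ with $\|X\|<1$, each eigenvalue satisfies $|1+\lambda_i(X)| \in [1-\|X\|,1+\|X\|]$, whence $\frac{1}{N}|\log|\det(\Id_N+X)|| \le -\log(1-\|X\|)$, vanishing as $\|X\|\to 0$. It therefore suffices to prove $\|\updelta_N B_N(z)^{-1}E_N\| \Pto 0$. Assumption \ref{ass:hs} and Markov's inequality supply $\|E_N\|\le\|E_N\|_{{\rm HS}} = O(N)$ with probability tending to $1$, so the remaining ingredient is merely a crude exponential upper bound on $\|B_N(z)^{-1}\|$.

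For this I use the simple estimate $|\det(B_N(z))|\le s_{\min}(B_N(z))\cdot \|B_N(z)\|^{N-1}$ together with \eqref{eq:T-op-norm} to obtain
\[
\|B_N(z)^{-1}\| \;\le\; \f{(|z|+\|{\bm a}\|_{\infty,\mathbb{S}^1})^{N-1}}{|\det(B_N(z))|}.
\]
The log-potential convergence recalled above then yields $|\det(B_N(z))| \ge \exp(N(\cL_{\wt\mu_{{\bm a}}}(z) - 1))$ for all sufficiently large $N$, for Lebesgue a.e.~$z$ (since $\wt\mu_{{\bm a}}$ is compactly supported, $\cL_{\wt\mu_{{\bm a}}}(z)>-\infty$ for a.e.~$z$). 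Consequently $\|B_N(z)^{-1}\| \le e^{C(z) N}$ for some finite $C(z)=C(z,{\bm a})$, and
\[
\|\updelta_N B_N(z)^{-1} E_N\| \;\le\; O(N)\exp\bigl(C(z) N - \log(1/\updelta_N)\bigr) \;\Pto\; 0,
\]
because $N^{-1}\log(1/\updelta_N)\to\infty$. No substantive obstacle arises: the super-exponential smallness of $\updelta_N$ swallows even this pessimistic resolvent estimate, which is precisely why the author deems the proof straightforward.
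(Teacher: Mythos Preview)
Your proof is correct and follows essentially the same route as the paper: factor the characteristic determinant as $\det(T_N({\bm a}_z))\cdot\det({\rm Id}_N+\updelta_N T_N({\bm a}_z)^{-1}E_N)$, control the second factor via an exponential bound $\|T_N({\bm a}_z)^{-1}\|\le e^{C(z)N}$ (obtained from the operator-norm bound \eqref{eq:T-op-norm} together with the Widom-type lower bound on $|\det T_N({\bm a}_z)|$), use Markov to bound $\|E_N\|$, and conclude via Lemma~\ref{lem:cL-converges}(ii). The only cosmetic difference is that the paper states the Widom lower bound as a separate lemma (Lemma~\ref{lem:widom}) and works on the explicit null set $\wh\Gamma({\bm a})$, whereas you invoke it implicitly as the pointwise log-potential convergence underlying Proposition~\ref{prop:lsd-toep-non-perturbed}.
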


Theorem \ref{thm:toep-super-exp} is stated assuming that $E_N$ satisfies Assumption \ref{ass:hs}. The reader can check from its proof that the same proof goes through as long as $\log(\E\|E_N\|_{{\rm HS}}^2) \lesssim N$. To keep consistency with the previous results we have chosen to state Theorem \ref{thm:toep-super-exp} in its current form. 

\begin{remark}
Let us give some heuristics behind the appearance of different limits in Theorems \ref{thm:lsd-cont}, \ref{thm:lsd-exp}, and \ref{thm:toep-super-exp} through a toy problem. \corAB{The case of the Jordan block is easy to analyze. See Example \ref{ex:J1}. So we consider the case of a tridiagonal Toeplitz matrix. Hence, after rescaling the matrix its symbol can be written as ${\bm a}(\zeta)=\zeta + a_{-1} \zeta^{-1}$}. For any $z \in \C$ let $\zeta_1(z)$ and $\zeta_2(z)$ be the roots of the polynomial $\zeta \mapsto \zeta {\bm a}_z(\zeta)$, with $|\zeta_1(z)| \ge |\zeta_2(z)|$.  Let $\wt E_N \in {\rm Mat}_N(\C)$ matrix with its $(1,N)$-th and $(N,1)$-th entries set to $\omega_N$, for some $\omega _N \ll 1$, and all other entries zero. It follows straight from  the definition of determinant and Widom's formula for the determinant of a finitely banded Toeplitz matrix (see also Lemma \ref{lem:widom}) that
\beq\label{eq:tridiagonal}
|\det(T_N({\bm a}_z)+\wt E_N) | \asymp |\zeta_1(z)|^N + |\zeta_1(z)|^N |\zeta_2(z)|^N \omega_N + \omega_N, \qquad \text{ for all } z \notin \Gamma, 
\eeq
where $\Gamma \subset \C$ is some set of zero Lebesgue measure. Recall \eqref{eq:bar-cS}. \corAB{Let} $\wt \cS:= \bar \cS \setminus (\cS_0 \cup \cS_{1,0}^r \cup \cS_{-1,2}^r)$. It immediately follows from \eqref{eq:tridiagonal},
\beq\label{eq:toy}
\lim_{N \to \infty} \f1N \log |\det(T_N({\bm a}_z)+\wt E_N) | = 
\left\{\begin{array}{ll}
\log |\zeta_1(z)|, & \mbox{ if } z \in \cS_0,\\
\log |\zeta_1(z)| + \log |\zeta_2(z)|, & \mbox{ if } \log(1/\omega_N) \ll N \mbox{ and } z \in \cS_{-1},\\
0, & \mbox{ if } \log(1/\omega_N) \ll N \mbox{ and } z \in \cS_{1},\\
\log |\zeta_1(z)| + \log |\zeta_2(z)| +\log r, & \mbox{ if } \omega_N =r^N \mbox{ and } z \in \cS_{-1,2}^r,\\
\log r,& \mbox{ if } \omega_N=r^N \mbox{ and } z \in \cS_{1,0}^r,\\
\log |\zeta_1(z)|, & \mbox{ if } \omega_N=r^N \mbox{ and } z \in \wt \cS,\\
\log |\zeta_1(z)|, & \mbox{ if } \log(1/\omega_N) \gg N \mbox{ and } z \in \notin \Gamma.
\end{array}
\right.
\eeq
The reader may note that the expressions in the \abbr{RHS} of \eqref{eq:toy} are indeed the $\log$-potentials of the limits appearing in Theorems \ref{thm:lsd-cont}, \ref{thm:lsd-exp}, and \ref{thm:toep-super-exp} under sub-exponential, exponential, and super-exponential random perturbations, when the symbol ${\bm a}(\zeta)= \zeta+ a_{-1} \zeta^{-1}$. \corAB{These three theorems show that analogs of \eqref{eq:toy} continue to hold for generic random perturbations and a wide class of symbols}.
\end{remark}

The next result is regarding \corAB{the} \abbr{LSD} of finitely banded Toeplitz matrices under macroscopic perturbations. Its proof essentially follows from \cite{S02} and  \cite{TVK10}. 
To state the result we need a couple of terminologies:
We let ${\bm c}$ to be the circular operator whose Brown measure is the uniform measure on the unit disk. Recall the definition of freeness from \cite[Definition 5.3.1]{AGZ}.

\begin{theorem}[macroscopic perturbation {\cite[Section 3.1]{BCC24}}]\label{thm:macro-lsd}
Fix $\upsigma > 0$. Let ${\bm a}: \mathbb{S}^1 \mapsto \C$ be a Laurent polynomial and let the entries of $E_N$ are i.i.d.~with zero mean and unit variance. Then the empirical measure $L_{T_N({\bm a}) + \upsigma N^{-1/2} E_N}$ converges \corAB{weakly, almost surely,} to $\bm{\mu}_{{\bm a}({\bm u})+ \upsigma {\bm c}}$, as $N \to \infty$, {where ${\bm u}$ and ${\bm c}$ are free}.
\end{theorem}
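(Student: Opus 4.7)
The plan is to proceed by Girko Hermitization and the log-potential criterion of Lemma \ref{lem:cL-converges}. Since Assumption \ref{ass:hs} together with \eqref{eq:T-op-norm} gives $N^{-1}\E\|T_N({\bm a})+\upsigma N^{-1/2}E_N\|_{{\rm HS}}^2 = O(1)$, the tightness hypothesis \eqref{eq:bdd-in-prob} is satisfied. It therefore suffices to show that, for Lebesgue a.e.\ $z\in \C$,
\[
\frac{1}{N}\sum_{i=1}^N \log s_i\bigl(T_N({\bm a})+\upsigma N^{-1/2}E_N - z\bigr) \longrightarrow \mathcal{L}_{\bm{\mu}_{{\bm a}({\bm u})+\upsigma {\bm c}}}(z),
\]
almost surely, and to identify the limit with the stated Brown measure via Lemma \ref{lem:cL-converges}(ii).

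First I would treat the Ginibre case $E_N=G_N$. A direct combinatorial trace calculation, in which only boundary terms contribute $O(1/N)$ error, yields $\tfrac{1}{N}\tr W(T_N(\zeta),T_N(\zeta)^*) \to \tau(W({\bm u},{\bm u}^*))$ for every word $W$ in two non-commuting variables; since $T_N({\bm a})$ is a fixed polynomial in $T_N(\zeta)$ and $T_N(\zeta)^*$, this gives $T_N({\bm a})\to {\bm a}({\bm u})$ in $*$-moments. Voiculescu's asymptotic freeness theorem then promotes this to joint $*$-moment convergence of $(T_N({\bm a}),\upsigma N^{-1/2}G_N)$ to $({\bm a}({\bm u}),\upsigma {\bm c})$ with the two limit operators free. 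Combining this with a Sankar--Spielman--Teng style lower tail $\P(s_{\min}(T_N({\bm a})+\upsigma N^{-1/2}G_N-z)\le N^{-B})\le N^{-c}$, which keeps the logarithm uniformly integrable against the empirical singular value distribution of the Hermitization, places the problem exactly within the framework of \'Sniady's random regularization theorem \cite{S02}, producing a.s.\ convergence of $L_{T_N({\bm a})+\upsigma N^{-1/2}G_N}$ to $\bm{\mu}_{{\bm a}({\bm u})+\upsigma{\bm c}}$.

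To remove the Gaussian assumption, I would invoke the replacement principle of Tao--Vu--Krishnapur \cite[Theorem 2.1]{TVK10}. The Hilbert--Schmidt bound it requires is immediate from the unit-variance hypothesis and \eqref{eq:T-op-norm}; the pointwise comparison of log-determinants for a.e.\ $z$ reduces to equality, in the limit, of the bulk Hermitized spectral distributions of the two models, which by standard deformed Wigner-type analysis is governed by a matrix Dyson equation depending only on the first two moments of the entries, together with a polynomial lower bound on $s_{\min}$ supplied by \cite[Theorem 2.1]{TV08} applied to the i.i.d.\ matrix $\upsigma N^{-1/2}E_N-(z-T_N({\bm a}))$, whose deterministic shift has operator norm $O(1)$. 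The main obstacle is precisely this last smallest singular value control: the non-normality of $T_N({\bm a})$ can make $s_{\min}(T_N({\bm a})-z)$ superexponentially small on the pseudospectrum, but the macroscopic noise scale $\upsigma N^{-1/2}$ swamps any such deterministic degeneracy, so the standard Littlewood--Offord machinery for i.i.d.\ matrices suffices to yield $s_{\min}\gtrsim N^{-B}$ with overwhelming probability. The almost sure upgrade from convergence in probability is then routine via Efron--Stein concentration of the Hermitized Stieltjes transform, followed by a countable dense-set Borel--Cantelli argument in $z$.
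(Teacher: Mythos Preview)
Your proposal is correct and follows essentially the same approach the paper indicates: the paper does not give a detailed proof but simply remarks that the result ``essentially follows from \cite{S02} and \cite{TVK10}'', which is precisely the two-step scheme you outline (Gaussian case via $*$-moment convergence and Brown measure regularization \`a la \'Sniady, then universality via the Tao--Vu--Krishnapur replacement principle with $s_{\min}$ control from \cite{TV08}). One small caveat: \'Sniady's theorem as stated in \cite{S02} concerns \emph{vanishing} Gaussian perturbations, so to invoke it literally one typically splits $\upsigma G_N = \sqrt{\upsigma^2-t^2}\,G_N' + t\,G_N''$ with independent Ginibre $G_N',G_N''$, applies the regularization to $T_N({\bm a})+\sqrt{\upsigma^2-t^2}\,N^{-1/2}G_N'$ with the small remaining piece $t N^{-1/2}G_N''$, and then sends $t\to 0$ using continuity of the Brown measure; your ingredients (asymptotic freeness plus $s_{\min}$ bound plus Hermitization) already contain everything needed for this, so the point is purely about how the citation is packaged.
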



\subsubsection{Finer description of the eigenvalues}
Theorems \ref{thm:lsd-cont}, \ref{thm:lsd-exp}, and \ref{thm:macro-lsd} do not shed any light whether the limits are well approximated by the eigenvalues of the finite dimensional matrices under consideration at the mesoscopic scale \corAB{(known as local laws in the literature)}. Such local laws have been considered \cite{OW23, SV} for the finitely banded case. One interesting find of \cite{OW23} is a one-to-one pairing between the eigenvalues of $T_N({\bm a})+N^{-\gamma}E_N$ and the push forward of the $N$-th roots of unity by the symbol. See the first two parts of the result below taken from \cite{OW23} for the Jordan block setting. The reader is referred to \cite{OW23} for such results for a wider class of Toeplitz matrices.

\begin{theorem}[{\cite[Theorem 1.1]{OW23}}]\label{thm:ow23}
Let $E_N$ be the random matrix with i.i.d.~$\pm 1$ valued entries of equal probability. Let $\{\xi_i\}_{i=1}^N$ be the $N$-th roots of unity.
\begin{enumerate}

\item[(i)] For any $\gamma >1/2$ and $p \ge 1$, there exists some $\vep >0$, so that, with probability $1-o(1)$, 
\[
\min_{\uppi} \left( \f1N \sum_{i=1}^N \left|\lambda_i(J_N+N^{-\gamma}E_N) - \xi_{\uppi(i)}\right|^p\right)^{1/p} \lesssim N^{-\vep},
\]
where the minimum is taken over all permutations $\uppi$ over $[N]$. 

\item[(ii)] For $\gamma \ge 3$ and any $\vep >0$, with probability $1-o(1)$,
\[
\max_{k \in [N]} \min_{i \in [N]} \left| \xi_k-\lambda_i(J_N+N^{-\gamma}E_N)\right| \lesssim \f{(\log N)^{1+\vep}}{N}. 
\]

\item[(iii)] Let $\varphi: \C \mapsto \C$ be a smooth function with compact support. For any $w \in \C$ and $a \ge 0$ let $\varphi_{w,a}(z):= \varphi(N^a(z-w))$. If $\gamma \ge 2+a$, then with probability $1-o(1)$,
\[
\left| \sum_{i=1}^N \varphi_{w,a}(\lambda_i(J_N+N^{-\gamma}E_N)) -\sum_{i=1}^N \varphi_{w,a}(\xi_i) \right| \lesssim \log N.
\] 
\end{enumerate} 
\end{theorem}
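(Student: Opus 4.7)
The plan is to study the characteristic polynomial $p_N(z):= \det(zI - J_N - N^{-\gamma} E_N)$ and to show that, with probability $1-o(1)$, it admits the decomposition $p_N(z) = z^N - X_N + r_N(z)$ for an explicit scalar random variable $X_N$ of modulus close to one, and a remainder $r_N$ whose coefficients of $z^{N-k}$ are small uniformly in $1 \le k \le N-1$. The zeros of $z^N - X_N$ are precisely $|X_N|^{1/N} e^{{\rm i}\arg(X_N)/N} \xi_k$, $k \in [N]$, and under our hypotheses one expects $|X_N|^{1/N} = 1 + O(\log N/N)$, while the global rotation $e^{{\rm i}\arg(X_N)/N}$ lies within $O(1/N)$ of $1$ or of $e^{{\rm i}\pi/N}$ depending on the sign of the dominant contributor $E_N(N,1)=\pm 1$. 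A Rouch\'e-type comparison then localizes the zeros of $p_N$ near those of $z^N - X_N$, and all three parts follow.

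First I would expand $p_N(z)$ via the Leibniz formula, indexing each $\sigma \in \mathfrak{S}_N$ by its set of ``Jordan-type'' positions $\{i: \sigma(i)=i+1\}$. A permutation using $N-k$ such positions forces at least $k$ factors of $N^{-\gamma}$. The identity permutation produces $\prod_i (z - N^{-\gamma} E_N(i,i))$, which concentrates around $z^N$ for $\gamma > 1/2$, while the unique permutation with $N-1$ Jordan positions, the single $N$-cycle, produces
\[
(-1)^{N-1} N^{-\gamma} E_N(N,1) \prod_{i=1}^{N-1}\bigl(1+N^{-\gamma} E_N(i,i+1)\bigr),
\]
which concentrates around $(-1)^{N-1} N^{-\gamma} E_N(N,1)$; one then sets $-X_N$ to be the constant coefficient of $p_N$. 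The coefficient of $z^{N-k}$ for $1 \le k \le N-1$ is $(-1)^k$ times the sum of principal $k$-minors of $M := J_N + N^{-\gamma} E_N$. For a subset $S \subset [N]$, decomposing $S$ as a disjoint union of its maximal runs of consecutive integers shows that $M[S;S]$ has a block-Jordan-plus-noise shape, and the dichotomy above applies within each block to bound its determinant. Crude per-block bounds combined with a martingale or Hanson--Wright style concentration applied along the rows of $E_N$ should give that the $z^{N-k}$ coefficient is $O(N^{-\alpha(\gamma,k)})$ with $\alpha(\gamma, k)$ increasing in $\gamma$. For $\gamma \ge 3$ this forces every intermediate coefficient to be $o((\log N)/N)$ uniformly in $k$.

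With the approximation $p_N(z) = z^N - X_N + r_N(z)$ in hand, Rouch\'e's theorem applied on disks of appropriate radius $\rho_N$ centered at the explicit zeros of $z^N - X_N$ assigns exactly one eigenvalue of $p_N$ to each such disk. Part (ii) follows by taking $\rho_N \asymp (\log N)^{1+\vep}/N$ at $\gamma \ge 3$. Part (i) uses the same Rouch\'e argument at $\gamma$ only slightly above $1/2$, where the remainder is larger; pairing $\lambda_i(J_N+N^{-\gamma} E_N)$ with the $i$-th explicit zero yields an $\ell^p$-average bound $\lesssim N^{-\vep}$ after standard concentration of the few eigenvalues that are allowed to lie outside the Rouch\'e disks. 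For part (iii), pair $\lambda_i$ with $\xi_{\uppi(i)}$ where $\uppi$ is read off the Rouch\'e localization; using $|\varphi_{w,a}(\lambda_i)-\varphi_{w,a}(\xi_{\uppi(i)})| \lesssim N^a |\lambda_i - \xi_{\uppi(i)}|$, summing over the $O(N^{1-a})$ indices for which $\xi_{\uppi(i)}$ lies in the $N^{-a}$-support of $\varphi_{w,a}$, and feeding in the sharper pointwise bound available at $\gamma \ge 2+a$, should produce the desired $O(\log N)$ bound.

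The main obstacle will be the uniform bound on the intermediate coefficients of $p_N$, particularly at the values of $\gamma$ just above the thresholds in parts (i) and (iii). The naive bound on each $k \times k$ principal minor loses powers of $N$ both from the $\binom{N}{k}$ choices of $S$ and from the internal combinatorial complexity of $M[S;S]$. Making it sharp requires exploiting both the deterministic sparsity of $J_N$ (which forces most minors to vanish or to have very restricted Leibniz supports) and the probabilistic cancellations among the nonzero minors coming from the independence of $\{E_N(i,j)\}$; a row-wise martingale decomposition of each coefficient, together with the block-factorization above, appears to be the natural vehicle for this step.
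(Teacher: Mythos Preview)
The paper does not prove this theorem: it is quoted verbatim from \cite{OW23} as a survey item, with no proof or proof sketch provided here. So there is nothing in the present paper to compare your proposal against.

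On the substance of your outline: the characteristic-polynomial-plus-Rouch\'e strategy is indeed the one used in \cite{OW23}, and your decomposition $p_N(z)=z^N-X_N+r_N(z)$ is the right organizing principle. One small slip: $|X_N|$ is not ``close to one'' but rather of order $N^{-\gamma}$, coming from the single factor $N^{-\gamma}E_N(N,1)$ in the $N$-cycle term; it is $|X_N|^{1/N}=1-O(\gamma\log N/N)$ that is close to one, as you correctly write a few lines later. This matters when you set up the Rouch\'e comparison, because on a circle of radius $\rho_N$ around a root $\zeta_k$ of $z^N-X_N$ one has $|z^N-X_N|\asymp \rho_N\cdot N\cdot |X_N|^{(N-1)/N}\asymp \rho_N N^{1-\gamma}$, not $\rho_N N$; the competing bound on $|r_N|$ must beat this, which is what drives the thresholds $\gamma\ge 3$ and $\gamma\ge 2+a$.

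The genuine work, as you anticipate, is the uniform control of the intermediate coefficients. Your block-factorization of principal minors according to maximal runs is correct and is how \cite{OW23} proceeds; each run of length $m$ contributes a factor that is either of size $O(1)$ (if the run's ``internal $(m{-}1)$-cycle'' term dominates, picking up no extra $N^{-\gamma}$) or of size $N^{-\gamma}$ per diagonal entry used. The delicate point is that for part (i) at $\gamma$ only slightly above $1/2$, the Rouch\'e localization alone does not give the $\ell^p$ bound: the remainder $r_N$ is too large on individual circles, and instead one compares the \emph{log-potentials} of the empirical measure of eigenvalues and of the $N$-th roots of unity, converting pointwise control of $\tfrac1N\log|p_N(z)|$ into a Wasserstein-type bound. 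Your sketch conflates this with the Rouch\'e step; the two mechanisms are different and operate at different values of $\gamma$.
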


Theorem \ref{thm:ow23}(ii) shows that the minimum distance of the eigenvalues of $J_N+N^{-\gamma}E_N$ from $\mathbb{S}^1$ is $O((\log N)^{1+\vep}/N)$. It is natural to seek whether such a bound is optimal. The following complementary result taken from \cite{BVZ} is indeed the case (see \cite[Sections 5--7]{BVZ} for the general case). 

\begin{theorem}[{\cite[Theorems 5.4 and 7.1]{BVZ}}]\label{thm:bvz}
Let $E_N$ satisfy Assumption \ref{ass:mom}. Fix $\gamma >1$. Then, for any $\vep >0$, 
\[
\P\left(\exists i \in [N]: \lambda_i(J_N+ N^{-\gamma}E_N) \notin \D\left(0, 1-\f{(\corAB{\gamma -1 -\vep}) \log N}{N}\right)\right) \ll 1. 
\]
Additionally, assume that $E_N$ satisfies Assumption \ref{ass:anti-conc} for some $\upeta >1$. Then, there exists some $C_\gamma < \infty$, so that for any $\upalpha >0$,
\[
\P\left( \left|\left\{ i \in [N]: \lambda_i(J_N+ N^{-\gamma}E_N) \in \D\left(0, 1- \f{\upalpha \log N}{N}\right)\right\}\right| \le \f{C_\gamma N}{\upalpha}\right) \ll 1. 
\]
\end{theorem}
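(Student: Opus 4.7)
For Part 1 of Theorem \ref{thm:bvz}, my plan starts from the identity valid for $\lambda \neq 0$,
\[
\det(\lambda\Id - J_N - N^{-\gamma}E_N) = \lambda^N \det(\Id - N^{-\gamma}M_\lambda E_N), \qquad M_\lambda := (\lambda\Id - J_N)^{-1},
\]
so that any nonzero eigenvalue forces the second factor to vanish. The matrix $M_\lambda$ is upper triangular with $(i,j)$-entry $\lambda^{-(j-i+1)}$ for $j \ge i$, and a direct computation gives $\|M_\lambda\|_{\rm HS}^2 = \sum_{k=1}^N (N-k+1)|\lambda|^{-2k}$. For $|\lambda| = 1 - c\log N/N$ with $c = \gamma - 1 - \vep$, Laplace-type asymptotics in this sum yield $\|M_\lambda\|_{\rm HS}^2 \asymp N^{2c+2}/(\log N)^2$. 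I would then cover the annulus $\{\lambda:|\lambda|>1-(\gamma-1-\vep)\log N/N\}$ by an $N^{-C}$-net of cardinality $N^{O(1)}$, prove that at each net point $\lambda_0$ the determinant $\det(\Id - N^{-\gamma}M_{\lambda_0}E_N)$ is bounded away from zero with probability at least $1-N^{-K}$, and conclude by $1$-Lipschitz continuity of $s_{\min}$ in the entries of $\lambda\Id - J_N - N^{-\gamma}E_N$. The concentration at a fixed $\lambda_0$ runs through the trace expansion $\log\det(\Id - tA) = -\sum_{m\ge 1} t^m\,\tr(A^m)/m$ with $t = N^{-\gamma}$, $A = M_{\lambda_0}E_N$: the linear term $N^{-\gamma}\tr(M_{\lambda_0}E_N)$ is a sum of independent mean-zero entries of $E_N$ with total variance $N^{-2\gamma}\|M_{\lambda_0}\|_{\rm HS}^2 \asymp N^{-2\vep}$, hence tiny with high probability by Chebyshev's inequality.

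The main obstacle in Part 1 is controlling the higher-order traces $N^{-\gamma m}\tr((M_{\lambda_0}E_N)^m)$ for $m \ge 2$. A naive bound $\|N^{-\gamma}M_\lambda E_N\| \lesssim N^{-\gamma}\|M_\lambda\|\cdot\|E_N\|$ only rules out eigenvalues up to $c < \gamma - 3/2$, losing a factor of $N^{1/2}$ relative to the target $c < \gamma - 1 - \vep$; recovering the sharp constant requires a separate moment estimate on each trace that exploits the cancellations inside $\tr((M_\lambda E_N)^m)$ rather than discarding them through operator-norm bounds.

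For Part 2, I would use Jensen's formula applied to the characteristic polynomial $p(\lambda) := \det(\lambda\Id - J_N - N^{-\gamma}E_N) = \prod_i(\lambda-\lambda_i)$ at radius $r = 1 - \alpha\log N/N$,
\[
\sum_{|\lambda_i|<r}\log\frac{r}{|\lambda_i|} = \frac{1}{2\pi}\int_0^{2\pi}\log|p(re^{i\theta})|\,d\theta - \log|p(0)|,
\]
to relate the eigenvalue count $N_r$ to $\log|p(0)| = \log|\det(J_N + N^{-\gamma}E_N)|$. The key input is the multilinear expansion of $\det(J_N + N^{-\gamma}E_N)$ in the entries of $E_N$: among all permutations $\sigma$ of $[N]$, only the cyclic one $\sigma(i) = i+1 \bmod N$ contributes to the order $N^{-\gamma}$ term, since $J_N$ has exactly one nonzero entry per row and its last row vanishes, yielding the leading contribution $(-1)^{N-1}N^{-\gamma}(E_N)_{N,1}$. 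Moment estimates show the higher-order terms $N^{-\gamma k}p_k(E_N)$, $k \geq 2$, each have typical size at most $N^{-\gamma + (k-1)(1/2-\gamma)}$, much smaller than the $k=1$ term for $\gamma > 1/2$. Combining this with Assumption \ref{ass:anti-conc} applied to $(E_N)_{N,1}$ and the spectral norm bound $\|J_N + N^{-\gamma}E_N\| \le 1 + o(1)$ from Part 1 (which controls the $\theta$-integral on the right-hand side at $O(1)$), one obtains $\log|p(0)| = -\gamma\log N + o(\log N)$ with high probability, which when plugged into Jensen's identity delivers control of $N_r$ at the claimed scale $N/\alpha$.

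The main obstacle in Part 2 is propagating the anti-concentration of the single entry $(E_N)_{N,1}$ through the full multilinear expansion $\det(J_N + N^{-\gamma}E_N) = \sum_{k \geq 1}N^{-\gamma k}p_k(E_N)$ without the higher-order polynomials $p_k$ producing a macroscopic cancellation of the leading linear term. This is precisely where the strengthened hypothesis $\upeta>1$ enters: it allows one to condition on all entries other than $(E_N)_{N,1}$, absorb the sum of higher-order contributions into a deterministic shift, and still invoke Assumption \ref{ass:anti-conc} for $(E_N)_{N,1}$ with enough margin to union-bound over the exceptional configurations.
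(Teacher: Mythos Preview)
Your plan for Part~2 is essentially the route taken in \cite{BVZ} (as sketched in Section~\ref{sec:proof-lsd} and the discussion of Step~(a) in Section~4 of the present paper): Jensen's formula together with the determinant expansion of $\det(J_N+N^{-\gamma}E_N)$ and anti-concentration for the leading coefficient. One correction: the $\theta$-integral in Jensen is not controlled by the spectral-norm bound on $J_N+N^{-\gamma}E_N$ but rather by the same determinant-expansion upper bound you would use for Part~1, applied on the circle $|\lambda|=1$; the point is that $|\det_0|=1$ there and the higher $\det_k$ are small by second-moment bounds, uniformly over a net.

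For Part~1, however, the trace expansion $\log\det(\Id-tA)=-\sum_{m\ge1}t^m\tr(A^m)/m$ is the wrong tool, and the obstacle you flag is not merely technical. That series converges only when the spectral radius of $tA=N^{-\gamma}M_\lambda E_N$ is below~$1$, and the whole question is whether it ever reaches~$1$; on the event that $\lambda$ \emph{is} an eigenvalue the series diverges, so you cannot use term-by-term moment bounds on $\tr(A^m)$ to exclude that event. ``Exploiting cancellations'' inside each trace does not help: even if every $N^{-\gamma m}\tr(A^m)$ were individually small with high probability, the sum of a divergent series is not recovered from its partial sums. What \cite{BVZ} does instead is the \emph{finite} determinant expansion of Section~\ref{sec:det-expansion}: write $\det(\lambda\Id-J_N-N^{-\gamma}E_N)=\sum_{k=0}^N\det_k(\lambda)$ with $\det_0(\lambda)=\lambda^N$ and each $\det_k$ a homogeneous polynomial of degree~$k$ in the entries of $E_N$ with coefficients given by minors of $\lambda\Id-J_N$ (cf.~\eqref{eq:det-k} and Lemma~\ref{lem:bidiagonal-det-1}). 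Second-moment bounds on each $\det_k$, summed over $k\in[N]$ and union-bounded over a net in $\{|\lambda|\ge 1-(\gamma-1-\vep)\log N/N\}$, show $\sum_{k\ge1}|\det_k(\lambda)|<|\det_0(\lambda)|$ uniformly on that net with high probability, which is exactly the input for Rouch\'e. This is the same ``moment estimate that exploits cancellations'' you are reaching for, but carried out on the finite polynomial $\det(\Id-tA)-1=\sum_{k\ge1}(-t)^k e_k(A)$ rather than on the divergent logarithmic series.
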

Let us add that the constraint $\gamma >1$ in Theorem \ref{thm:bvz} is sharp. This is supported by heuristic computations in the Jordan block case and simulations.

\subsubsection{Limiting spectral distribution for other matrix models}
Beyond the Toeplitz setting, results on spectral properties of random perturbation of non-normal matrices are very limited. Such results are available only when the matrix is a twisted Toeplitz matrix is triangular \cite{BPZ} or non-triangular with a periodic boundary \cite{V20}, or a random bidiagonal matrix \cite{BPZ} (sometimes referred to as one way Hatano-Nelson model). We collect the following result from \cite{BPZ} to illustrate the phenomena that even if $L_{\cM_N^{(1)}}$ and $L_{\cM_N^{(2)}}$ have the same limit for two sequences of non-normal matrices $\{\cM_N^{(1)}\}_{N \in \N}$  and $\{\cM_N^{(2)}\}_{N \in \N}$, their pseudospectral behaviors and \abbr{LSD}s under random perturbations can be vastly different. See also Figures \ref{fig8} and \ref{fig9}.

\begin{theorem}[{\cite[Theorem 1.3]{BPZ}}]\label{thm:HN}
Let $f: [0,1] \mapsto \C$ be a H\"older continuous function and $D_N^{(1)}$ be a $N \times N$ diagonal matrix such that its $i$-th diagonal entry is $f(i/N)$. Let $D_N^{(2)}$ be another $N \times N$ diagonal matrix with entries that are i.i.d.~copies of some random variable $X$. Assume that the law of $X$ is supported on a simply connected compact set in $\C$ with zero two dimensional Lebesgue measure. Let $\updelta_N$ be as in Theorem \ref{thm:sv-general} and $E_N$ satisfy Assumptions \ref{ass:hs} and \ref{ass:s-min}. Define $\cM_N^{(i)}:= J_N+D_N^{(i)}$, for $i=1,2$.
\begin{enumerate}

\item[(i)] $L_{\cM_N^{(1)}+\updelta_N E_N} \Lra \wt \nu$, in probability, as $N \to \infty$, where $\wt \nu$ is the law of \corAB{$f(U_1)+U_2$, $U_1$} and $U_2$ are uniforms on $[0,1]$ and $\mathbb{S}^1$, respectively, and independent of each other.    

\item[(ii)] $L_{\cM_N^{(2)}+\updelta_N E_N} \Lra \nu$, in probability, as $N \to \infty$, for some probability measure $\nu$ on $\C$ such that such that $\cL_\nu(z) = (\E \log |X-z|)_+$ for Lebesgue a.e.~$z \in \C$.

\end{enumerate}
\end{theorem}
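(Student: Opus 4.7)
Proposal. I would prove Theorem \ref{thm:HN} by Girko's log-potential method combined with Lemma \ref{lem:cL-converges}(ii). Assumption \ref{ass:hs} ensures condition \eqref{eq:bdd-in-prob}, so it suffices to show, for Lebesgue a.e.\ $z\in\mathbb{C}$ and in probability,
\begin{equation*}
\frac{1}{N}\log\bigl|\det(\mathcal{M}_N^{(i)}+\updelta_N E_N-zI)\bigr|\longrightarrow\mathcal{L}_{\nu_i}(z),\qquad i=1,2,
\end{equation*}
where $\nu_1=\widetilde\nu$ and $\nu_2=\nu$. Unwinding the measures, $\mathcal{L}_{\widetilde\nu}(z)=\int_0^1(\log|z-f(t)|)_+\,dt$ (by conditioning on $U_1$ and using that the uniform measure on $\mathbb{S}^1$ shifted by $w$ has log-potential $(\log|z-w|)_+$ at $z$), while $\mathcal{L}_\nu(z)=(\mathbb{E}\log|X-z|)_+$ by hypothesis.

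The key exploit is the upper-triangular structure of $\mathcal{M}_N^{(i)}-zI=D_N^{(i)}+J_N-zI$: one has $\det(\mathcal{M}_N^{(i)}-zI)=\prod_j(D_N^{(i)}(j)-z)$ and the closed form
\begin{equation*}
\bigl[(\mathcal{M}_N^{(i)}-zI)^{-1}\bigr]_{ij}=\frac{(-1)^{j-i}}{\prod_{k=i}^{j}(D_N^{(i)}(k)-z)},\qquad 1\le i\le j\le N.
\end{equation*}
Factoring $\mathcal{M}_N^{(i)}+\updelta_N E_N-zI=(\mathcal{M}_N^{(i)}-zI)(I+R_N^{(i)}(z))$ with $R_N^{(i)}(z):=\updelta_N(\mathcal{M}_N^{(i)}-zI)^{-1}E_N$ splits the log-determinant into a diagonal part $\frac{1}{N}\sum_j\log|D_N^{(i)}(j)-z|$ and a correction $C_N^{(i)}(z):=\frac{1}{N}\log\bigl|\det(I+R_N^{(i)}(z))\bigr|$. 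The diagonal part converges to $\eta_i(z):=\mathbb{E}\log|D^{(i)}-z|$ by a Riemann-sum argument in Part (i) (using H\"older continuity of $f$, together with the fact that $f([0,1])$ has zero two-dimensional Lebesgue measure to exclude bad $z$) and by the strong law in Part (ii). The magnitudes of the entries of $(\mathcal{M}_N^{(i)}-zI)^{-1}$ are governed by running Lyapunov-type exponents $(j-i+1)^{-1}\sum_{k=i}^j\log|D_N^{(i)}(k)-z|$. For Part (ii), the i.i.d.\ structure of $\log|X_k-z|$ yields a \emph{global} dichotomy: in the outer regime $\eta_2(z)>0$ one has $\|R_N^{(2)}(z)\|=e^{-\Theta(N)}$, so $C_N^{(2)}(z)=o(1)$ and the log-potential equals $\eta_2(z)=(\eta_2(z))_+$; in the inner regime $\eta_2(z)<0$ the perturbation lifts the exponentially small singular values of $\mathcal{M}_N^{(2)}-zI$ to polynomial scale and $C_N^{(2)}(z)$ precisely cancels $\eta_2(z)$, yielding $0=(\eta_2(z))_+$. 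For Part (i) the Lyapunov exponent is deterministic but depends \emph{locally} on $t$; I would decompose $[N]$ into $K$ windows of size $N/K$ on which $f$ is nearly constant, apply the above inner/outer analysis per window with threshold $|f(t_k)-z|\gtrless 1$, and let $N\to\infty$ followed by $K\to\infty$ to recover $\int_0^1(\log|z-f(t)|)_+\,dt$.

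The main obstacle is the inner-regime cancellation: establishing $C_N^{(i)}(z)=-\eta_i(z)+o(1)$ (or its local block-wise analogue) as a precise equality, not just a one-sided bound. The device I would use is a dual factorization through $\updelta_N E_N$ on events where that matrix is well-conditioned, which recasts the correction as a negative copy of the diagonal contribution plus lower-order errors; this relies on the polynomial lower bound on $s_{\min}$ from Assumption \ref{ass:s-min} (applied to the relevant sub-matrices of $E_N$ via \cite[Theorem 2.1]{TV08}-type estimates) together with Weyl-type stability of the intermediate singular values. In Part (i) a further technicality is controlling the cross-window couplings introduced by $J_N$ and $\updelta_N E_N$ so that the block-diagonal approximation incurs only $o_K(1)$ error, which should follow from the Hilbert-Schmidt bound in Assumption \ref{ass:hs}.
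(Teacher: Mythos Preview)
Your reduction via Lemma~\ref{lem:cL-converges}(ii) and the identification of the target log-potentials are correct, and this is exactly where the paper (following \cite{BPZ}) starts as well. The divergence is in how the log-determinant of the perturbed matrix is controlled.

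The paper does not split into an ``outer'' and an ``inner'' regime at all. Instead it invokes the deterministic equivalence of Theorem~\ref{thm:VZ} (the Grushin problem of Section~\ref{sec:grushin}): for any $M_N$ with $\|M_N\|$ polynomially bounded, one has
\[
\frac1N\log\bigl|\det(M_N+\updelta_N E_N)\bigr|=\frac1N\sum_{j:\,s_j(M_N)>\alpha}\log s_j(M_N)+o(1)
\]
in probability, as soon as the number of singular values of $M_N$ below $\alpha$ is $o(N/\log N)$. Applied to $M_N=\mathcal{M}_N^{(i)}-z{\rm Id}_N$, this replaces your correction $C_N^{(i)}(z)$ \emph{for free} by the difference between the full and the $\alpha$-truncated log-singular-value sums of the unperturbed bidiagonal matrix. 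The inner/outer dichotomy then becomes a purely deterministic (or, in Part~(ii), LLN) computation on the singular values of a bidiagonal matrix, which are governed by the very same running products $\prod_{k=i}^j|D_N^{(i)}(k)-z|$ you wrote down; the cancellation you are after is exactly the statement that the small singular values one discards carry the negative part of $\eta_i(z)$.

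Your proposal has two genuine gaps. First, the claim $\|R_N^{(2)}(z)\|=e^{-\Theta(N)}$ in the outer regime is false: the diagonal entries of $(\mathcal{M}_N^{(2)}-z{\rm Id}_N)^{-1}$ are $(X_k-z)^{-1}$, which are $\Theta(1)$, and short products $\prod_{k=i}^j(X_k-z)$ can be small even when $\eta_2(z)>0$. What actually holds is a polynomial bound on $\|(\mathcal{M}_N^{(2)}-z{\rm Id}_N)^{-1}\|$ (with high probability), which combined with Assumption~\ref{ass:hs} still gives $N^{-1}\log|\det(I+R_N)|=o(1)$, but this requires an argument and is not what you wrote. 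Second, and more seriously, the ``dual factorization through $\updelta_N E_N$'' does not deliver the inner-regime cancellation. Writing $\det(\mathcal{M}_N^{(i)}+\updelta_N E_N-z{\rm Id}_N)=\det(\updelta_N E_N)\det(I+\widetilde R_N)$ with $\widetilde R_N=(\updelta_N E_N)^{-1}(\mathcal{M}_N^{(i)}-z{\rm Id}_N)$ gives
\[
\frac1N\log|\det(\text{perturbed})|=\log\updelta_N+\frac1N\log|\det E_N|+\frac1N\log|\det(I+\widetilde R_N)|,
\]
and to reach $0$ you would need the last term to cancel $\log\updelta_N$ to $o(1)$ precision. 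Since $\|\widetilde R_N\|$ is only polynomially bounded and has no smallness, you have no handle on $\det(I+\widetilde R_N)$ beyond the trivial $(1+\|\widetilde R_N\|)^N$, and you are back to the original difficulty. The Grushin construction \eqref{gp6}--\eqref{eq:det-cP} is precisely the tool that turns this into a two-sided estimate: it projects onto the $M$-dimensional space of near-null vectors, shows $|\det\mathcal{P}|=\prod_{j>M}t_j$ exactly, and then controls the perturbed $E_{-+}^\delta$ via Assumption~\ref{ass:s-min}. That is the missing idea in your scheme, and once you use it the block decomposition for Part~(i) is also unnecessary.
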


\begin{figure}[htbp]
  \centering
   \begin{minipage}[b]{0.4\linewidth}
   \includegraphics[width=\textwidth]{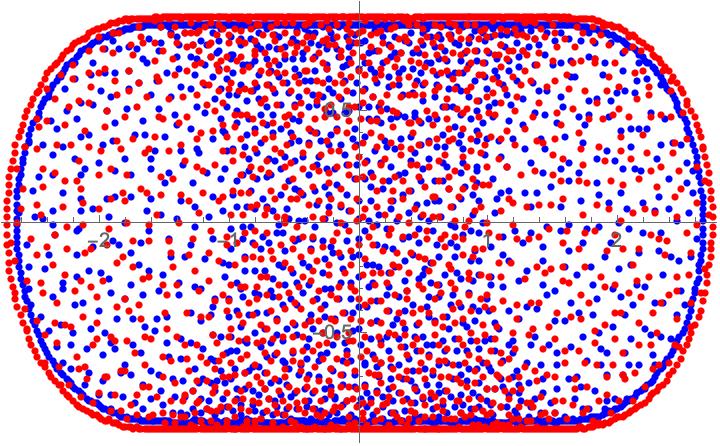}
 
 \end{minipage}
 \hspace{2cm}
 \begin{minipage}[b]{0.4\linewidth}
  \includegraphics[width=\textwidth]{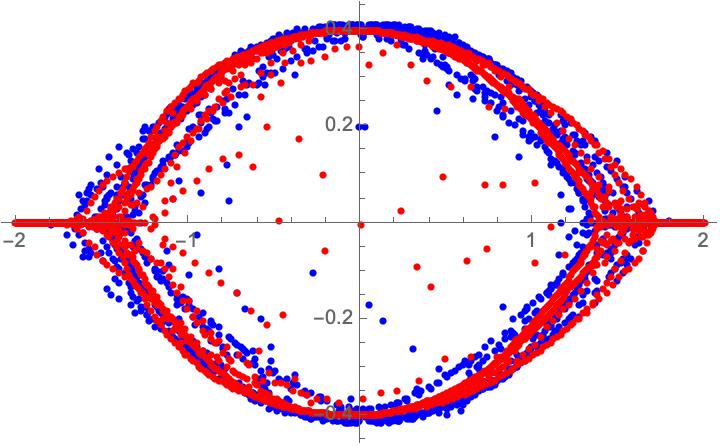}
 \end{minipage}
 \caption{Eigenvalues of $U_N A_N U_N^*$ are in blue, where $U_N$ is a simulated Haar unitary matrix via Mathematica, while eigenvalues of $A_N+N^{-3} G_N$ are in red, where $G_N$ is a matrix with i.i.d.~standard complex Gaussians.  $N=2000$. $A_N= \diag(\{-2+2i/N\}_{i \in [N]})+J_N$ (\texttt{left panel}) and $A_N= \diag(\{X_i\}_{i \in [N]})+J_N$\corAB{(\texttt{right panel})}, where $\{X_i\}_{i \in [N]}$ are i.i.d.~${\rm Unif}[-2,2]$.}
 \label{fig8}
\end{figure}

\begin{figure}[htbp]
  \centering
   \begin{minipage}[b]{0.4\linewidth}
   \includegraphics[width=\textwidth]{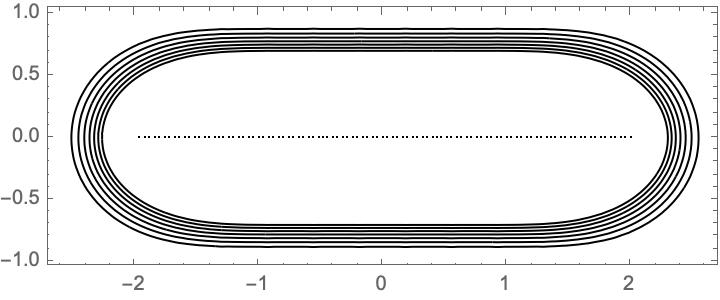}
 
 \end{minipage}
 \hspace{1cm}
 \begin{minipage}[b]{0.47\linewidth}
  \includegraphics[width=\textwidth]{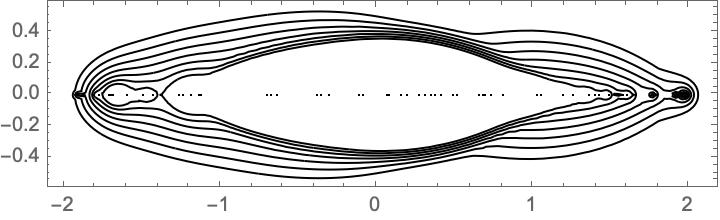}
 \end{minipage}
 \caption{$\vep$-pseudospectral level lines for $\vep=10^{-2}, 10^{-2.4}, \ldots, 10^{-4.4}$ for $A_N= \diag(\{-2+2i/N\}_{i \in [N]})+J_N$ (\texttt{left panel}) and $A_N= \diag(\{X_i\}_{i \in [N]})+J_N$ (\texttt{right panel}), where $\{X_i\}_{i \in [N]}$ are i.i.d.~${\rm Unif}[-2,2]$. $N=100$.}
 \label{fig9}
\end{figure}
\corAB{Let us remark in passing that motivated by the non-Hermitian Anderson model there have been quite a few works on understanding the properties of eigenvalues of non-Hermitian random tridiagonal matrices with periodic boundary conditions. See \cite{GK98, GK00, GK03, GS18}. For a certain range (determined by the Lyapunov exponent) of the `non-Hermitianity' parameter, these models, similar to the one in Theorem \ref{thm:HN}(ii), also exhibit the `bubble' plus `wing' like structure in the large $N$ limit.}

\subsection{Limiting distribution of outliers}
The reader may note from Figure \ref{fig7} that although most of the eigenvalues are near the symbol curves, there seem to be some which are of distance order one from those curves. One may wonder whether they are a finite $N$-effect or indeed there are eigenvalues in the limit that are of distance $O(1)$ from ${\bm a}(\mathbb{S}^1)$. We discuss two results, the first of which describes the region in the complex plane where no eigenvalues reside in the large $N$ limit. 

It is instructive to note that in the right panel of Figure \ref{fig7} there is a region enclosed by the symbol curve where no eigenvalues reside -- the reader may check that any $z$ in that region has winding number zero with respect to the symbol curve. This is the content of the first result. \corAB{Recall Proposition \ref{prop:BS99}.}

\begin{theorem}[{\cite[Theorem 1.1]{BZ} and \cite[Theorem 1.1]{BCC24}}]\label{thm:no-outlier}
Let ${\bm a}$ be as in \eqref{eq:laurent-poly}.
\begin{enumerate}
\item[(i)] Let the entries of $E_N$ be centered and have unit variance, and $\updelta_N \ll N^{-1/2}$ be such that $\log(1/\updelta_N)\ll N$. Then, for any $\vep >0$, on a set of probability $1-o(1)$, all eigenvalues of $T_N({\bm a})+\updelta_N E_N$ are contained in $\sigma(T({\bm a}))+ \D(0,\vep)$. 
\item[(ii)] Additionally assume that the entries of $E_N$ have finite fourth moments. Then, for any $\upsigma >0$ and any compact set $\cK \subset \C\setminus ({\rm supp}({\bm \mu}_{{\bm a}({\bm u})+\upsigma {\bm c}}) \cup \sigma (T({\bm a})))$, almost surely, for all large $N$, there are no eigenvalues of $T_N({\bm a})+ \upsigma N^{-1/2}E_N$ in $\cK$.
\end{enumerate}
\end{theorem}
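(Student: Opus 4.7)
The plan is to preclude eigenvalues in the claimed exclusion region by showing that $T_N({\bm a}_z)+\updelta_N E_N$ (respectively, $T_N({\bm a}_z) + \upsigma N^{-1/2} E_N$) is invertible with high probability, uniformly in $z$ in a compact subset of that region. A net argument, using that $z \mapsto T_N({\bm a}_z)$ is globally Lipschitz, reduces this to a polynomial-size family of grid points; since $\|T_N({\bm a})\| \leq \|{\bm a}\|_{\infty,\mathbb{S}^1}$ by \eqref{eq:T-op-norm} and the perturbation has at most $O(1)$ operator norm with high probability, we may restrict to $z$ in a fixed bounded disk.

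For Part (i), fix $\vep > 0$ and let $\mathcal{K} \subset \C \setminus (\sigma(T({\bm a})) + \D(0,\vep))$ be compact. By Proposition \ref{prop:BS99}, for $z \in \mathcal{K}$ the symbol ${\bm a}_z$ does not vanish on $\mathbb{S}^1$ and ${\rm wind}({\bm a},z)=0$, so $T({\bm a}_z)$ is invertible on $\ell^2(\N)$. The classical stability theory of the finite section method (cf.~\cite[Ch.~1--3]{BS99}) then yields $\sup_{z \in \mathcal{K}} \|T_N({\bm a}_z)^{-1}\| \le C_\vep$ for all $N$ sufficiently large. Factoring
\[
T_N({\bm a}_z) + \updelta_N E_N = T_N({\bm a}_z)\bigl(I + \updelta_N T_N({\bm a}_z)^{-1} E_N\bigr),
\]
the problem reduces to the Neumann condition $\|\updelta_N T_N({\bm a}_z)^{-1} E_N\| < 1$, which follows once $\|E_N\| \lesssim N^{1/2}$ with high probability. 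Under only the unit variance hypothesis, the latter is obtained by a standard truncation: split $E_N = E_N' + E_N''$ with $E_N'(i,j) := E_N(i,j)\one[|E_N(i,j)| \le N^{1/4}]$, apply a matrix Bernstein inequality to get $\|E_N'\| \lesssim N^{1/2}$, and bound $\|E_N''\|_{{\rm HS}}$ by Markov's inequality.

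For Part (ii), the perturbation $\upsigma N^{-1/2} E_N$ has $\Theta(1)$ operator norm, so the Neumann series argument fails. Instead one estimates $s_{\min}(T_N({\bm a}_z) + \upsigma N^{-1/2} E_N)$ via Hermitization: the self-adjoint matrix
\[
H_N(z) := \begin{pmatrix} 0 & T_N({\bm a}_z) + \upsigma N^{-1/2} E_N \\ (T_N({\bm a}_z) + \upsigma N^{-1/2} E_N)^* & 0 \end{pmatrix}
\]
converges in $*$-moments to the Hermitization of ${\bm a}({\bm u}) - z + \upsigma {\bm c}$ (with ${\bm u}$ and ${\bm c}$ free). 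For $z$ outside ${\rm supp}({\bm \mu}_{{\bm a}({\bm u})+\upsigma{\bm c}}) \cup \sigma(T({\bm a}))$, the Brown measure definition implies that the symmetrized singular-value distribution of ${\bm a}({\bm u}) - z + \upsigma {\bm c}$ has a gap at $0$ of width $c(z) > 0$ (equivalently, the limiting $\log$-potential is harmonic near $z$). Transferring this gap to the finite model via concentration estimates for $H_N(z)$--for which the four finite moments of $E_N$ suffice--yields $s_{\min}(T_N({\bm a}_z) + \upsigma N^{-1/2} E_N) \ge c(z)/2$ almost surely for all large $N$.

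The hardest step is the gap-transfer in Part (ii): free probability gives only qualitative positivity of the limiting gap, and upgrading it to a quantitative, $z$-uniform estimate requires the simultaneous exclusion of $\sigma(T({\bm a}))$ (keeping $\|T_N({\bm a}_z)^{-1}\|$ bounded so the deterministic part of $H_N(z)$ is stable) and of ${\rm supp}({\bm \mu}_{{\bm a}({\bm u})+\upsigma{\bm c}})$ (preventing the random part from creating small singular values). Both exclusions are genuinely needed, reflecting that the limiting spectral distribution has mass exactly on ${\rm supp}({\bm \mu}_{{\bm a}({\bm u})+\upsigma{\bm c}})$ while any outliers land on $\sigma(T({\bm a}))$.
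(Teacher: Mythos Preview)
Your route to Part~(i)---finite-section stability plus a Neumann series---is different from the paper's and, under a fourth-moment hypothesis, would indeed be the simpler argument. But it does not reach the hypotheses as stated: with only centered unit-variance entries, $\|E_N\|=O_P(N^{1/2})$ is \emph{false} in general. If the entry law has tails $\P(|X|>t)\asymp t^{-2}$ (finite variance), then $\max_{i,j}|E_N(i,j)|$ is of order $N$ with high probability, forcing $\|E_N\|\geq N$. Your truncation does not rescue this: the tail piece satisfies only $\E\|E_N''\|_{\mathrm{HS}}^2=N^2\varepsilon_N$ with $\varepsilon_N\to 0$ arbitrarily slowly, so $\updelta_N\|E_N''\|=o_P(N^{1/2}\sqrt{\varepsilon_N})$ need not vanish. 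The paper avoids operator-norm control altogether: it uses the determinant expansion of Section~\ref{sec:det-expansion} and shows, via the second-moment computation \eqref{eq:det-k-second-moment} (which needs only $\E|E_N(i,j)|^2<\infty$, through orthogonality of the minors $\det(E_N^t[X;Y])$), that $|{\det}_0(z)|\gg\sum_{k\geq 1}|{\det}_k(z)|$ uniformly on compacta of $\cS_0$, then concludes by Rouch\'e's theorem.

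For Part~(ii) your sketch leaves the essential step open. Weak convergence of the Hermitized empirical measure to a law with a gap at $0$ does \emph{not} preclude stray singular values near $0$ for finite $N$; that is exactly what a ``no-outlier'' theorem asserts, and it is not a consequence of concentration for linear spectral statistics (compare the BBP picture: the bulk law has a gap, yet outliers persist). The paper's argument is structurally different and exploits the finite band width: one writes $T_N({\bm a})=C_N-P_NQ_N$ with $C_N$ circulant (hence normal) and $P_N,Q_N$ of fixed rank $\wt N=N_++N_-$, and then the Sylvester identity reduces invertibility of $T_N({\bm a}_z)+\upsigma N^{-1/2}E_N$ on $\cK$ to non-vanishing of a fixed-size $\wt N\times\wt N$ determinant built from $Q_N(z-C_N-\upsigma N^{-1/2}E_N)^{-1}P_N$. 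It is the resolvent of the \emph{normal} matrix $C_N$ plus i.i.d.\ noise that is actually controlled (for $z\notin{\rm supp}({\bm\mu}_{{\bm a}({\bm u})+\upsigma{\bm c}})$), while the exclusion of $\sigma(T({\bm a}))$ is what keeps the reduced determinant bounded away from zero---not boundedness of $\|T_N({\bm a}_z)^{-1}\|$ as you suggest.
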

The next result is on the distribution of the outliers in ${\rm supp}({\bm \mu}_{{\bm a}({\bm u})+\upsigma {\bm c}}) \cup \sigma (T({\bm a}))$ (for $\upsigma =0$, i.e.~in case of microscopic perturbation ${\rm supp}({\bm \mu}({\bm a}({\bm u}))) \subset \sigma (T({\bm a}))$). To state the result we need a few definitions. 
For $\D\subset \C$ we let $\cB(\D)$ denote the Borel $\sigma$-algebra on it. Recall that a Radon measure on $(\D, \cB(\D))$ is a measure that is finite for all Borel compact subsets of $\D$.
\begin{dfn}
A random point process $\Xi$ on an open set $\D \subset \C$ is a probability measure on the space of
all \corAB{nonnegative} integer valued Radon measures on $(\D,\cB(\D))$. Given a sequence of random point processes
$\{\Xi_n\}_{n \in \N}$ on $(\D,\cB(\D))$, we say that $\Xi_n$ converges weakly to a (possibly random) point process $\Xi$ on the same space, and write $\Xi_n \rightsquigarrow \Xi$, if for all compactly supported bounded real-valued continuous functions $f$ on $\D$ we have
\[
\int f(z) d\Xi_n(z) \Lra \int f(z) d\Xi(z), \quad \text{ as } n \to \infty. 
\]
\end{dfn}
Recall \eqref{eq:cS}. For $\upsigma >0$, set 
\[
\cS_{\upsigma, d}:=\left\{z \in \C \setminus {\rm supp}({\bm \mu}({\bm a}({\bm u})+\upsigma {\bm c}): {\rm wind}({\bm a}, z) =d\right\}.
\]
\begin{theorem}[{\cite[Theorem 1.11]{BZ} and \cite[Theorems 1.2 and Theorems 1.3]{BCC24}}]\label{thm:outlier}
Fix an integer $d \ne 0$. Let ${\bm a}$ be as in \eqref{eq:laurent-poly}. 
\begin{enumerate}
\item[(i)]Let $\updelta_N$ and $E_N$ be as in Theorem \ref{thm:no-outlier}(i). Additionally assume that the entries of $E_N$ are i.i.d.~satisfying Assumption \ref{ass:anti-conc}. Set $\Xi_N^d$ be the random point process induced by the eigenvalues of $T_N({\bm a})+\updelta_N E_N$ that are in $\cS_d$. Then there exists a random analytic function ${\bm F}^d$ on $\cS_d$ so that $\Xi_N^d \rightsquigarrow \Xi^d$, the zero set of ${\bm F}^d$. 
\item[(ii)] Let the entries if $E_N$ be i.i.d.~satisfying Assumption \ref{ass:mom}, and have a common symmetric distribution that is absolutely continuous with respect \corAB{to} the Lebesgue measure either on $\C$ or on $\varpi \R$ for some $\varpi \in \mathbb{S}^1$. Set $\Xi_N^{\upsigma, d}$ be the random point process induced by the eigenvalues of $T_N({\bm a})+\upsigma N^{-1/2} E_N$ that are in $\cS_{\upsigma, d}$. Then there exists a random analytic function ${\bm F}^{\upsigma, d}$ on $\cS_{\upsigma, d}$ so that $\Xi_N^{\upsigma, d} \rightsquigarrow \Xi^{\upsigma, d}$, the zero set of ${\bm F}^{\upsigma, d}$. 
\end{enumerate}
\end{theorem}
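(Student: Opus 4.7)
\medskip
\noindent\textbf{Proof proposal for Theorem \ref{thm:outlier}.}

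The plan is to realize the point processes $\Xi_N^d$ (respectively $\Xi_N^{\upsigma, d}$) as the zero sets of a suitably normalized characteristic polynomial $p_N(z) := \det(T_N({\bm a}_z) + \updelta_N E_N)/c_N(z)$, show that $p_N$ converges in distribution, as a random analytic function on $\cS_d$, to some non-degenerate random analytic function ${\bm F}^d$, and then invoke the standard consequence of Hurwitz's theorem that zero sets of random analytic functions converge weakly when the functions do (together with a lower bound ruling out the zero limit). Thus the task reduces to (a) choosing the right normalization $c_N(z)$ so that $p_N$ is neither exploding nor vanishing on compacts $K \Subset \cS_d$, and (b) establishing convergence as random analytic functions.

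For the choice of $c_N$, I would use Widom's formula for $\det T_N({\bm a}_z)$: for $z \in \cS_d$ with $d = N_+ - m_+(z) \ne 0$, the matrix $T_N({\bm a}_z)$ has $|d|$ singular values of size $O(\varrho(z)^N)$ for some $\varrho(z) \in (0,1)$, while the remaining singular values are bounded below uniformly on compact subsets of $\cS_d$. Writing an SVD-type decomposition $T_N({\bm a}_z) = U_N(z) \Sigma_N(z) V_N(z)^*$, the ``large'' block can be factored out, and a Schur complement reduction expresses $\det(T_N({\bm a}_z) + \updelta_N E_N)$, up to a deterministic factor $c_N(z)$ of the form $\updelta_N^{|d|} \cdot \prod_{\text{large } s_j}$, as the determinant of a $|d| \times |d|$ random matrix $M_N(z)$ whose entries are (approximately) linear combinations of the entries of $E_N$ tested against the approximate kernel and cokernel vectors of $T_N({\bm a}_z)$. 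The kernel/cokernel vectors are given explicitly by the roots $\zeta_j(z)$ of $\zeta^{N_-} {\bm a}_z(\zeta)$, and depend analytically on $z$ on each $\cS_d$.

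Having reduced to $p_N(z) \approx \det M_N(z)$, the convergence step proceeds by a central limit theorem for linear combinations of entries of $E_N$ under Assumption \ref{ass:mom}: the entries of $M_N(z)$ are sums of many independent terms and thus converge, in finite-dimensional distribution, to a Gaussian analytic field indexed by $z \in \cS_d$. Tightness of $p_N$ as a random analytic function follows from uniform $L^2$ bounds on $p_N(z)$ combined with a Cauchy-integral based equicontinuity argument. The limit ${\bm F}^d$ is then identified as $\det M_\infty(z)$, where $M_\infty$ is an explicit $|d| \times |d|$ Gaussian analytic matrix-valued function; non-triviality of ${\bm F}^d$ (needed to exclude mass escape in Hurwitz) follows from an anti-concentration argument using Assumption \ref{ass:anti-conc}.

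The hardest step, I expect, is controlling the SVD/Schur reduction uniformly in $z$ over compacts $K \Subset \cS_d$ \emph{and} showing that no spurious small singular values are created by the perturbation $\updelta_N E_N$. Concretely, one needs to establish, uniformly in $z \in K$, that the $(N-|d|) \times (N-|d|)$ principal block of $T_N({\bm a}_z)+\updelta_N E_N$ obtained after projecting away the approximate kernel directions has smallest singular value of order $\gtrsim N^{-\beta}$ with overwhelming probability. This is precisely where Assumption \ref{ass:s-min} enters, but upgrading that pointwise bound to a uniform-in-$z$ bound requires an $\vep$-net argument together with Lipschitz estimates in $z$ for the approximate cokernel vectors. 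For part (ii) the same scheme applies with $T_N({\bm a})$ replaced by $T_N({\bm a})+\upsigma N^{-1/2} E_N$ playing the role of the background operator: the deterministic Widom asymptotics are replaced by the $*$-moment convergence to ${\bm a}({\bm u}) + \upsigma {\bm c}$, and the small singular values of $T_N({\bm a}_z)+\upsigma N^{-1/2}E_N$ for $z \in \cS_{\upsigma, d}$ are analyzed via the Hermitization/Brown-measure framework of \cite{S02, TVK10}, after which the Schur reduction and CLT arguments proceed analogously.
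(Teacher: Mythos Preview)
Your overall architecture---normalize the characteristic polynomial, reduce to a $|d|\times|d|$ determinant coming from the approximate kernel/cokernel directions, prove convergence as a random analytic function, then invoke Hurwitz---is sound and is close in spirit to what the paper does. The paper, however, carries out the reduction not via an SVD/Schur complement but via the combinatorial determinant expansion of Section~\ref{sec:det-expansion}: one writes $\det(T_N({\bm a}_z)+\updelta_N E_N)=\sum_{k=0}^N {\det}_k(z)$ and shows that ${\det}_{|d|}(z)$ dominates on $\cS_d$, with the anti-concentration Lemma~\ref{prop:anti-conc} supplying the lower bound. For part (ii) the paper does \emph{not} use Hermitization/Brown-measure asymptotics; it uses a Sylvester identity after writing $T_N({\bm a})=C_N({\bm a})-P_NQ_N$ with $C_N$ circulant, which reduces the problem to a $\widetilde N\times\widetilde N$ determinant and then again applies \eqref{eq:det_decomposition}.

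There is a genuine gap in your convergence step. You claim that the entries of $M_N(z)$ are ``sums of many independent terms'' and hence converge by CLT to a Gaussian analytic field. This is false: the kernel and cokernel vectors of $T_N({\bm a}_z)$ for $z\in\cS_d$ are (linear combinations of) geometric sequences built from the roots $\zeta_j(z)$ with $|\zeta_j(z)|<1$ (or their reciprocals), so when you test $E_N$ against them the resulting bilinear form is dominated by the \emph{corner} entries of $E_N$---a bounded number of entries, not a growing sum. No averaging occurs, and the limit retains the law of those individual entries. This is exactly why the paper remarks, just after the statement, that ``the limit of the random point process induced by the outliers depends heavily on the distribution of the entries'' and that ${\bm F}^d$ is non-Gaussian for $|d|>1$ even when the entries of $E_N$ are Gaussian. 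Your identification of ${\bm F}^d$ as $\det M_\infty$ with $M_\infty$ Gaussian is therefore wrong in general. Relatedly, you invoke Assumption~\ref{ass:s-min} and Assumption~\ref{ass:mom} in part (i), but neither is among the hypotheses there (only centering, unit variance, i.i.d., and Assumption~\ref{ass:anti-conc} are assumed); the paper's determinant-expansion route avoids any smallest-singular-value input by working directly with the minors and using only second-moment bounds for the upper bound and Lemma~\ref{prop:anti-conc} for the lower bound.
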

A few remarks are in order. The article \cite{BCC24} allows $E_N$ to be replaced by $U_N E_N U_N^*$ for $U_N$ a unitary matrix and some conditions on $U_N$ for the limit result -- to keep the exposition simple we have avoided including that generalization. Explicit expressions for the limiting random field can be found in \cite{BZ, BCC24}. It was noted in \cite{BZ, BC16} that for the Jordan block $\lim_{\upsigma \to 0} {\bm F}^{\upsigma, d} = {\bm F}^d$ uniformly on compact sets. From the proofs in \cite{BCC24} it follows that the same holds for any finitely banded Toeplitz matrix. 
It is worth pointing out, unlike results on \corAB{the} \abbr{LSD} where the limit is agnostic of the distribution of the entries of $E_N$, the limit of the random point process induced by the outliers depends heavily on the distribution of the entries. Further, unless $d = 1$, the limiting random field is not Gaussian even if the entries of $E_N$ are so.

\subsection{Localization of eigenvectors} Studying properties of eigenvectors of random matrices are of much interest. There is a huge literature studying different notions of delocalizations, such as $\sup$-norm delocalization \cite{ADK22, EKYY13, ESY09b, ESY09a}, no-gaps delocalization \cite{RV16}, quantum ergodicity \cite{AM15, BY17}, and eigenstate thermalization hypothesis \cite{CEH25, ER24} for the eigenvectors of various Hermitian random matrix ensembles (as well as localization properties in some cases \cite{ADK24, D25}). On the non-Hermitian side the literature is quite limited \cite{LO20, LT, RV16}. 

In this section we discuss localization properties of eigenvectors corresponding to `bulk' eigenvalues of randomly perturbed finitely banded Toeplitz matrices. Unlike the Wigner setting or the circular law limit setting, here we term an eigenvalue to be a bulk eigenvalue if it avoids neighborhoods of some `bad' points, defined below, determined by the symbol. 

\begin{dfn}\label{dfn:bad-pts}
Let ${\bm a}$ be as in \eqref{eq:laurent-poly}. Set $\cB({\bm a})$ to be the set of $z$'s such that the polynomial $\zeta \mapsto \zeta^{N_-} {\bm a}_z(\zeta)$ has double roots. Next, let $\ga_{{\bm a}}$ be the contracted symbol so that ${\bm a}(\zeta)=\ga_{{\bm a}}(\zeta^{\rm g})$, where ${\rm g}:={\rm gcd}\{|j|: j \ne 0 \text{ and } a_j \ne 0\}$. Define $\cB'({\bm a})$ to be the set of self-intersection points of $\ga_{{\bm a}}$. Set $\cB_\star({\bm a}):= \cB({\bm a}) \cup \cB'({\bm a})$ and $\cG_\vep:=\cG_{{\bm a}, \vep}:=  {\bm a}(\mathbb{S}^1)\setminus (\cB_\star({\bm a}) + \D(0,\vep))$, for any $\vep >0$. 
\end{dfn}

To ensure that there cannot be too many eigenvalues of $T_N({\bm a})+ N^{-\gamma}E_N$ in a neighborhood of $\cB_\star({\bm a})$ we work with the following assumption. 

\begin{assumption}\label{ass:symbol}
\corAB{$\cB_\star({\bm a})$ is a finite set.} 
\end{assumption}
By \cite[Lemma 11.4]{BG05} \corAB{the set} $\cB({\bm a})$ is always finite. On the other hand, by \cite{KK23} it follows that $\cB'({\bm a})$ is also finite unless $N_-=N_+$ and $|a_{-N_-}|=|a_{N_+}|$. 

A key to the localization property of most of the eigenvectors of polynomially vanishing random perturbation of finitely banded Toeplitz matrices is that most of its eigenvalues are at a distance of order $\log N/N$ from the symbol curves. Such a domain is definited below:
\beq\label{eq:Omega-ep-C}
\Omega(\vep, C, N):= \{z \in \C: C^{-1}\log N/N \le {\rm dist}(\cG_\vep, z) \le C \log N/M, d(z) \ne 0 \}, \quad 0 < \vep, C < \infty. 
\eeq
Since the symbol ${\bm a}$ and the strength of the random perturbation will be kept fixed, to lighten the notation we will write $\{\lambda_i^N\}_{i \in [N]}$ to denote the eigenvalues of $T_N({\bm a})+N^{-\gamma} E_N$. Further, for any $\lambda_i^N$ we will let $v_i^N$ to be a right eigenvector of unit $\ell^2$ norm corresponding to it. 
\begin{theorem}[{\cite[Theorems 1.5 and 1.6, and Corollary 1.7]{BVZ}}]\label{thm:eigenvec}
Let Assumptions \ref{ass:mom} and \ref{ass:symbol} hold. Further assume that Assumption \ref{ass:anti-conc} holds with $\upeta >1$. Fix $\mu >0$ and $\gamma >1$. 

\begin{enumerate}
\item[(i)] There exist $0 < \vep_\star, C_\star < \infty$ (depending on $\mu, \gamma$, and ${\bm a}$) such that for any $\vep \in (0,\vep_\star)$
\[
\P(\left(\left|\left\{ i \in [N]: \lambda_i^N \in \Omega(\vep,C_\star, N)\right\}\right| \le (1-\mu)N  \right) \ll 1. 
\]

\item[(ii)] Fix $\vep \in (0,\vep_\star/2)$. There exists some constant $c(\gamma) >0$ such that the following event holds with probability $1-o(1)$: For any $i \in [N]$ such that $\lambda_i^N \in \Omega(\vep, C_\star, N)$ and any $\ell \in [N]$ 
\[ 
      \|(v_i^N)_{[\ell, N]}\| {\bf 1}(d(\lambda_i^N) >0) + \|(v_i^N)_{[1, N-\ell]}\| {\bf 1}(d(\lambda_i^N) <0) \le c(\gamma)^{-1} \exp(-c(\gamma) \ell \log N/N). 
     \]
     \item[(iii)] Fix $\wh z = \wh z(N) \in \Omega(2\vep, 2 C_\star,N)$, large constants $C_0$ and $\wt C_0$, and $\eta \in (0,1)$ small. Then there exist constants 
$c_1=c_1(\eta,C_0, \wt C_0)$ and $c_0(\gamma) \in (0,1)$, with $\lim_{\gamma \to 1}c_0(\gamma)= 1$ and $\lim_{\gamma \to \infty} c_0(\gamma) = 0$, so that the following event holds with probability at least $1-\eta$ for all large $N$: For any $i \in [N]$ such that $\lambda_i^N \in \D(\wh z, C_0 \log N/N)$ and any $1 \le \ell \le \ell' \le \wt C_0 N/\log N$ satisfying $\ell' -\ell > N^{c_0(\gamma)}$  
\[ 
      \|(v_i^N)_{[\ell, \ell']}\|^2 {\bf 1}(d(\lambda_i^N) >0) + \|(v_i^N)_{[N-\ell', N-\ell]}\|^2 {\bf 1}(d(\lambda_i^N) <0) \ge c_1 (\ell'-\ell)\log N/N. 
     \]

\item[(iv)] There exist $\mu_1, \mu_2 >0$ so that with $|{\rm supp}_{\mu_1}(v)|:= \min\{|I|: \|v_I\| > 1-\mu_1\}$, for any $v \in \C^N$,
\[
\limsup_{N \to \infty} \f1N \E\left|\left\{i \in [N]: |{\rm supp}_{\mu_1}(v_i^N))| < \mu_2 N/\log N \right\} \right| \le \mu. 
\] 
\end{enumerate}
\end{theorem}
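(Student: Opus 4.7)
\emph{Proof proposal for Theorem \ref{thm:eigenvec}.} The plan is to build on the eigenvalue location results of Theorem \ref{thm:bvz} (extended from $J_N$ to finitely banded $T_N({\bm a})$) and on the transfer-matrix structure inherent to the banded Toeplitz eigenvalue equation. Throughout, write $M := T_N({\bm a}) + N^{-\gamma} E_N - \lambda_i^N$, so that $M v_i^N = 0$, and focus on the case $d(\lambda_i^N) > 0$; the other case follows by symmetry after transposition.

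\emph{Part (i)} is essentially Theorem \ref{thm:bvz} in disguise. The outer bound $\mathrm{dist}(\lambda_i^N, \cG_\vep) \ge C_\star^{-1} \log N / N$ follows from Widom's formula: for $z \notin \cB_\star({\bm a})$, $|\det(T_N({\bm a}_z))| \ge \exp(N \cL_{{\bm a},1}(z) - o(N))$, where $\cL_{{\bm a},1}$ is as defined before Theorem \ref{thm:lsd-exp}; analyticity of $\cL_{{\bm a},1}$ across $\cG_\vep$, together with Assumption \ref{ass:anti-conc} at $\upeta > 1$, allows an anti-concentration argument on the perturbed determinant that excludes eigenvalues in a $(C_\star^{-1} \log N / N)$-neighborhood (the exponent $\upeta > 1$ creates the necessary slack). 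The complementary bound, that at most $\mu N$ eigenvalues lie outside a $(C_\star \log N / N)$-tube around ${\bm a}(\mathbb{S}^1)$, follows from Theorem \ref{thm:lsd-cont} together with the second estimate of Theorem \ref{thm:bvz} adapted to finitely banded symbols.

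\emph{Part (ii)} proceeds by a Schur-complement iteration that isolates the transfer-matrix dynamics. Partition indices as $[1, \ell-1] \cup [\ell, N]$ and use $M v_i^N = 0$ to write
\[
(v_i^N)_{[\ell, N]} = -A_\ell^{-1} B_\ell\, (v_i^N)_{[1, \ell-1]}, \qquad A_\ell := M[\ell, N; \ell, N], \quad B_\ell := M[\ell, N; 1, \ell-1].
\]
Now $A_\ell$ is again a randomly perturbed Toeplitz matrix with the same symbol, so applying Part (i) to $A_\ell$ gives $s_{\min}(A_\ell) \gtrsim \log N / N$ with probability $1-o(1)$. The block $B_\ell$ has a deterministic part of rank at most $N_+$ supported in the corner (from the banded Toeplitz) plus a random part of spectral norm $\lesssim N^{-\gamma + 1/2}$. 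The banded structure of $T_N({\bm a}) - \lambda_i^N$ allows the composite map to be expressed as iterated action of the transfer matrix $\mathcal{T}(\lambda_i^N)$, whose eigenvalues are $\zeta_1(\lambda_i^N), \ldots, \zeta_{N_++N_-}(\lambda_i^N)$. For $\lambda_i^N$ at distance $\asymp \log N / N$ from ${\bm a}(\mathbb{S}^1)$ one has $|\zeta_{m_+}(\lambda_i^N)| = 1 + \Theta(\log N / N)$ and $|\zeta_{m_++1}(\lambda_i^N)| = 1 - \Theta(\log N / N)$; the right boundary condition at $i=N$ forces the coefficient of every expanding mode to be exponentially suppressed, and iterating the Schur identity coordinate by coordinate yields $\|(v_i^N)_{[\ell, N]}\| \lesssim \exp(-c(\gamma) \ell \log N / N)$.

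\emph{Parts (iii) and (iv)} rest on a companion approximate-eigenvector construction. Let $\tilde{v}$ be the superposition $\sum_{j > m_+(\lambda_i^N)} c_j\, \zeta_j(\lambda_i^N)^i$ of decaying modes, with coefficients fixed by matching the right-end boundary data, and show $\|v_i^N - \tilde{v}\| \ll \|v_i^N\|$ by combining Part (ii) with its left-to-right analogue (the Schur identity inverted to write $(v_i^N)_{[1,\ell-1]}$ in terms of $(v_i^N)_{[\ell,N]}$). A direct geometric-sum computation gives $\|\tilde{v}_{[\ell, \ell']}\|^2 \asymp (\ell'-\ell)\,|\zeta_{m_++1}(\lambda_i^N)|^{2\ell}$ for the dominant mode, and this translates into the lower bound $c_1 (\ell'-\ell) \log N / N$ once the interval is long enough, $\ell' - \ell \ge N^{c_0(\gamma)}$, to absorb the noise-induced correction; the size of this correction sets the $\gamma$-dependence of $c_0(\gamma)$. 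Part (iv) then follows by combining (ii) and (iii): the $\ell^2$-mass of $v_i^N$ outside a window of length $O(N/\log N)$ is at most $\mu_1/2$, while within the window the mass is essentially uniformly spread on scale $N/\log N$, so any $\mu_1$-supporting set must have cardinality $\ge \mu_2 N / \log N$; averaging and invoking the count from Part (i) yields the final statement.

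\emph{The main obstacle} is Part (ii): running the Schur-complement/transfer-matrix iteration uniformly in $\ell$ without losing the correct exponential rate and coherently controlling the random corrections. The exclusion of $\cB_\star({\bm a})$ via Assumption \ref{ass:symbol} (where the transfer matrix loses its spectral gap because two roots $\zeta_j(\lambda)$ collide in modulus) and the strengthened anti-concentration $\upeta > 1$ are both essential to push the argument through at the optimal scale $\log N / N$.
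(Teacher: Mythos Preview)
Your outline diverges substantially from the paper's route, and the divergence exposes a genuine gap. The paper does \emph{not} run a Schur-complement/transfer-matrix iteration on the eigenvalue equation. Instead it sets up a Grushin problem for $A_z = T_N({\bm a}_z)$ with $M=|d(z)|$, works on a \emph{net} $\cN_\theta$ of the region $\Omega(\vep,C,N)$, and proves three blocks of estimates uniformly over net points: (a) most eigenvalues lie in $\Omega(\vep,C,N)$ (your Part (i), via determinant expansion and Jensen's formula rather than Widom alone); (b) operator-norm bounds $\|E^\delta(z)\|\lesssim N^{\alpha_1}$, $\|({\rm Id}_N+\delta Q E(z))^{-1}\|\lesssim N^{\alpha_2}$, $\delta^k\|(E(z)Q)^kE_+(z)\|\lesssim N^{-\beta k}$; and (c) the bottom $|d|$ right singular vectors $e_i(z)$ are well approximated by linear combinations of the pure states $(\zeta_j(z)^k)_k$ and hence inherit their localization. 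Plugging these into the identity \eqref{eq:ef-to-pm-1} at the nearest net point $z_\lambda$ shows $\|v_\lambda - w_{z_\lambda}(v_\lambda)\|=o(1)$, so the eigenvector localization is read off from the \emph{singular vectors of the unperturbed Toeplitz matrix at a deterministic $z$}, not from a transfer-matrix recursion at the random $\lambda$.

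The gap in your Part (ii) is precisely this decoupling issue. You write $(v_i^N)_{[\ell,N]} = -A_\ell^{-1} B_\ell (v_i^N)_{[1,\ell-1]}$ and claim that ``applying Part (i) to $A_\ell$ gives $s_{\min}(A_\ell)\gtrsim \log N/N$''. But Part (i) is a statement about the \emph{location} of eigenvalues of the full perturbed matrix, not a lower bound on the smallest singular value of a submatrix shifted by $\lambda_i^N$; and $\lambda_i^N$ is a random variable measurable with respect to $E_N$, so you cannot invoke any singular-value estimate that was proved for $z$ fixed and independent of the noise. Even granting a bound $\|A_\ell^{-1}\|\lesssim N/\log N$, the naive product $\|A_\ell^{-1}\|\cdot\|B_\ell\|$ diverges; you appeal to iterating ``coordinate by coordinate'' via the transfer matrix, but the perturbed matrix $T_N({\bm a})+N^{-\gamma}E_N$ is \emph{not} banded, so there is no finite-dimensional transfer matrix to iterate. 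Treating the dense random part perturbatively at each step, uniformly in $\ell$ and at the random $\lambda_i^N$, is exactly the hard part, and nothing in your sketch indicates how to do it without the net/Grushin machinery (or something equivalent). The same circularity infects Part (iii): your approximate eigenvector $\tilde v$ is built from the roots $\zeta_j(\lambda_i^N)$ at the random eigenvalue. The paper's resolution---prove localization of the deterministic singular vectors $e_i(z)$ uniformly on a net, then approximate $v_\lambda$ by a combination of $e_i(z_\lambda)$ via the Grushin identities---is the missing idea.
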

Let us explain the consequences of Theorem \ref{thm:eigenvec}. Its first part shows that indeed most of the eigenvalues are at a distance of order $\log N/N$ from ${\bm a}(\mathbb{S}^1)$. Theorem \ref{thm:eigenvec}(ii)-(iii) show that for all eigenvalues in the good region, the corresponding eigenvectors localize at scale $N/ \log N$, and for most eigenvalues, this is the scale at which the eigenvectors \corAB{spread} out. The last part consolidates on the first three parts to claim that for most eigenvectors one needs at least an order of $N/\log N$ many sites to accumulate a non-negligible $\ell^2$ mass. Let us also add that the restriction $\gamma >1$ is sharp both \corAB{for} the location of the eigenvalues with respect to the symbol curve, and \corAB{for} the localization of the eigenvectors. See Figure \ref{fig10} for the contrasting behavior of eigenvectors in the two regimes: $\gamma > 1$ and $\gamma \in (1/2,1)$. \corAB{Establishing the delocalization properties of} eigenvectors for $\gamma \in (1/2,1)$ is a work in progress \cite{BVZ2}.

\begin{figure}[htbp]
  \centering
   \begin{minipage}[b]{0.45\linewidth}
   \includegraphics[width=\textwidth]{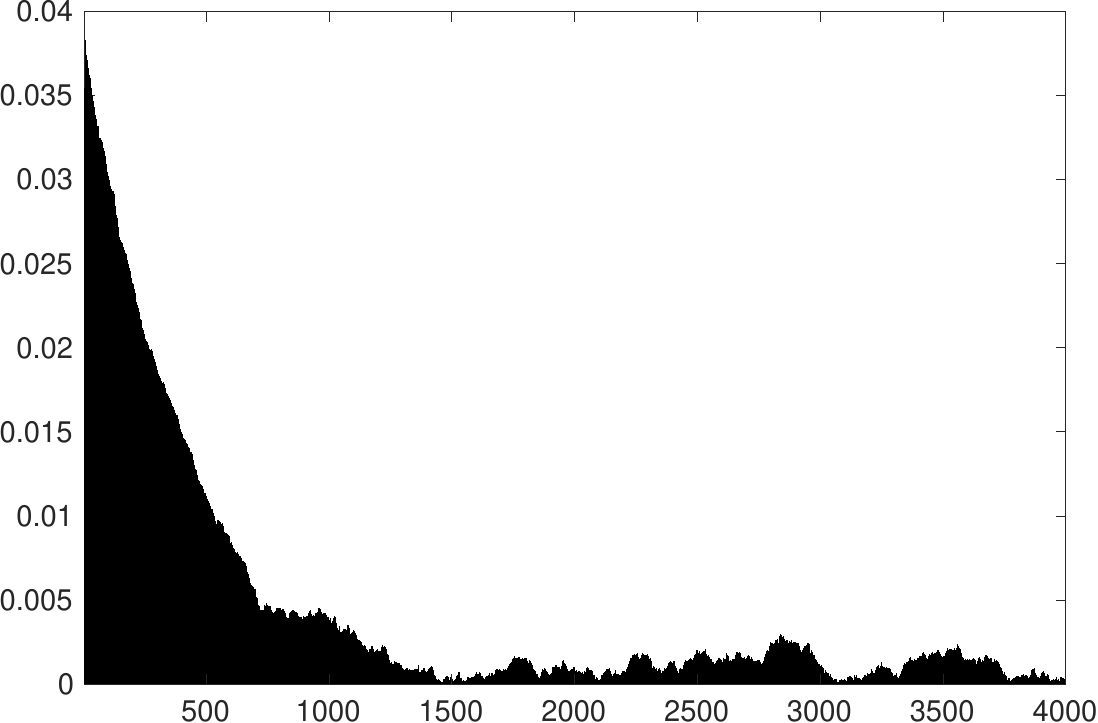}
 \end{minipage}
 \hspace{1cm}
 \begin{minipage}[b]{0.313\linewidth}
  \includegraphics[width=\textwidth]{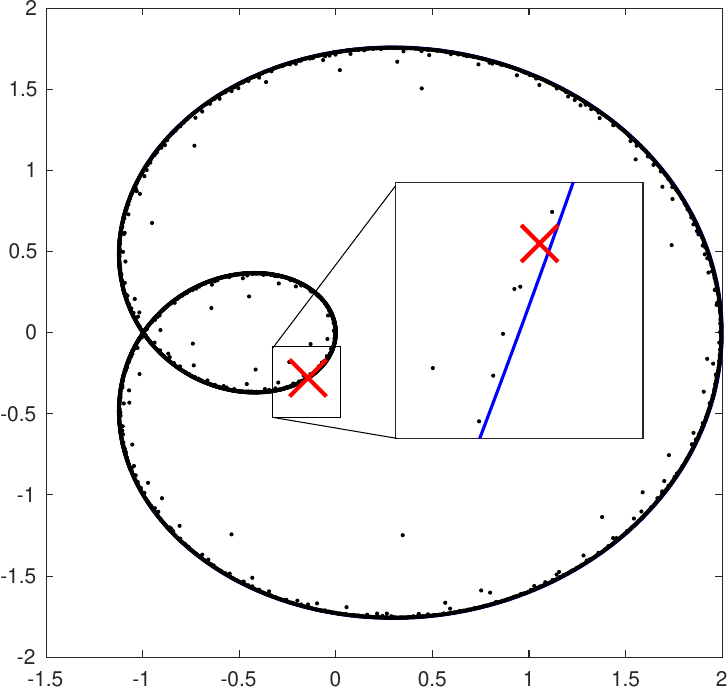}
 \end{minipage}
 \\[1.5ex]
  \begin{minipage}[b]{0.45\linewidth}
   \includegraphics[width=\textwidth]{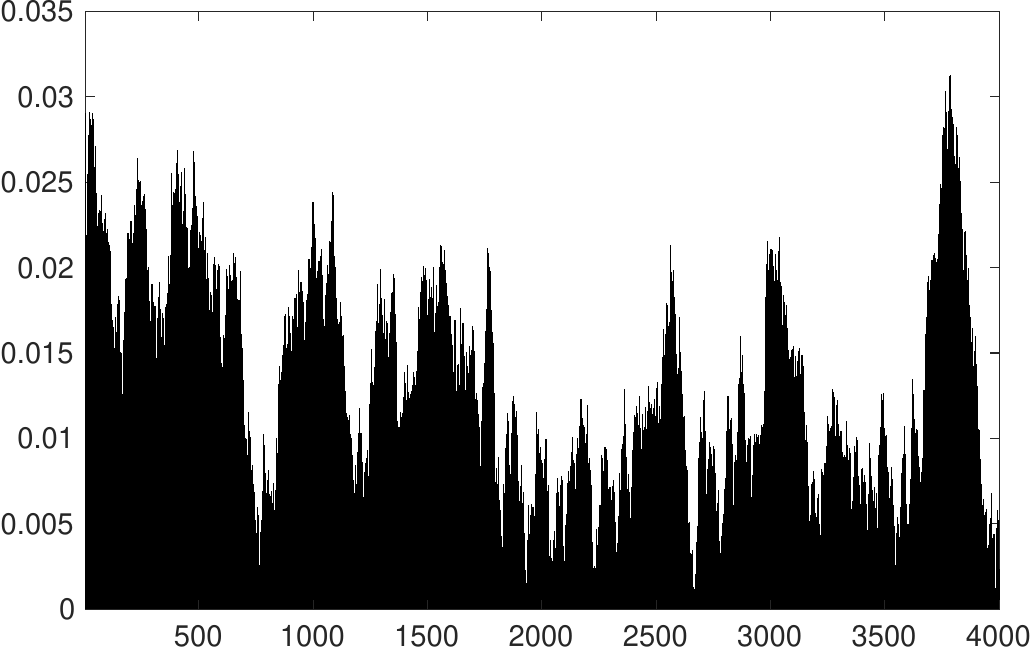}
 \end{minipage}
 \hspace{1cm}
  \begin{minipage}[b]{0.29\linewidth}
  \includegraphics[width=\textwidth]{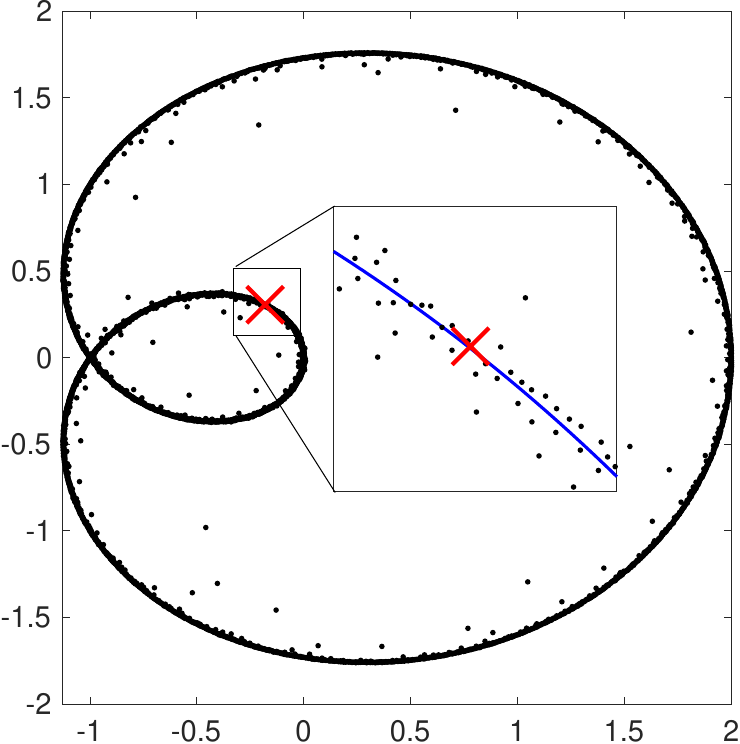}
 \end{minipage}
 \caption{Eigenvectors (\texttt{left panel}) and eigenvalues (\texttt{right panel}) for 
 $N=4000$, $\gamma=1.2$ (\texttt{top panel}) and $\gamma=0.8$ (\texttt{bottom panel}), and symbol ${\bm a}(\zeta)=\zeta+\zeta^{2}$.
Plotted are the moduli of the entries of the eigenvector that
 corresponds to the eigenvalue marked with a red $\times$. (reproduced with permission from \cite{BVZ})
}
  \label{fig10}
\end{figure}

It was predicted in \cite{BVZ} that eigenvectors corresponding to eigenvalues that are \corAB{at a} distance of order greater than $\log N/N$ from the symbol curve should have different localization lengths. In particular, those that are of distance $O(1)$ from ${\bm a}(\mathbb{S}^1)$ should localize on $O(1)$ many sites. We \corAB{confirm the prediction.} 
To keep the exposition short we only consider the Jordan block setting below. The same strategy can be used to treat the finitely banded setting. 

\begin{theorem}[Complete localization of eigenvectors for outlier eigenvalues]\label{thm:eigenvec-outlier}
Let \corAB{${\bm a}(\zeta)=\zeta^{-1}$}. Fix $\vep >0$ and $\gamma >1/2$. Let $E_N$ satisfy Assumption \ref{ass:spectral-norm}. There exists some constant $\wh c_0(\gamma) >0$ with $\lim_{\gamma \to 1/2} \wh c_0(\gamma)=0$ and $\lim_{\gamma \to \infty} \gamma^{-1} \wh c_0(\gamma)=1$ so that the following event holds with probability $1-o(1)$: For any $i \in [N]$ such that $\lambda_i^N \in \D(0,1-\vep)$, $\|v_i^N\|_\infty \ge \vep^{1/2}/2$ and $\|(v_i^N)_{[\ell, N]}\| \le (1-\vep)^{\ell-1}$ for all $1 \le \ell \le \wh c(\gamma) \log N$. 
\end{theorem}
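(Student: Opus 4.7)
The key simplification for the pure Jordan block is that the eigen-equation can be unrolled explicitly. Let $A_N := J_N + N^{-\gamma} E_N$ and let $(\lambda, v)$ be any eigenpair with $\lambda \in \D(0, 1-\vep)$ and $\|v\|=1$. The first $N-1$ rows of $A_N v = \lambda v$ give the scalar recursion $v_{i+1} = \lambda v_i - N^{-\gamma}(E_N v)_i$, which iterates to
\[
v_{i+1} \;=\; \lambda^{i} v_{1} \;-\; N^{-\gamma}\sum_{k=1}^{i}\lambda^{\,i-k}(E_N v)_k, \qquad 0\le i\le N-1.
\]
Hence $v = v_1 \tilde v + r$ with $\tilde v := (1,\lambda,\ldots,\lambda^{N-1})^{T}$ the candidate geometric eigenvector and $r$ a noise-driven residual whose entries are discrete convolutions of $(E_N v)_k$ against the geometric kernel $(\lambda^{j})_{j\ge 0}$.

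The first step is to bound $r$. Young's inequality in the form $\ell^{1}\ast\ell^{2}\to\ell^{2}$ gives
\[
\|r\| \;\le\; N^{-\gamma}\bigl\|(\lambda^{j})_{j\ge 0}\bigr\|_{\ell^{1}}\|E_N v\| \;\le\; \frac{N^{-\gamma}}{1-|\lambda|}\|E_N\| \;\le\; \frac{K N^{1/2-\gamma}}{\vep}
\]
on the event $\Omega_K := \{\|E_N\|\le K\sqrt N\}$, which by Assumption \ref{ass:spectral-norm} and Markov's inequality has probability at least $1-1/K$. Since $\gamma>1/2$, $\|r\|=o(1)$. Crucially this is an operator-norm bound, so the same deterministic estimate applies \emph{simultaneously} to every eigenpair of $A_N$ on $\Omega_K$, giving the uniformity over $i\in[N]$ required by the theorem.

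The two conclusions now follow from $v = v_1 \tilde v + r$ together with $\|\tilde v\|^{2} = (1-|\lambda|^{2N})/(1-|\lambda|^{2}) \le 1/\vep$. The reverse triangle inequality $|v_1|\|\tilde v\|\ge \|v\|-\|r\|\ge 1-o(1)$ implies $|v_1|\ge(1-o(1))\sqrt{1-|\lambda|^{2}} \ge \vep^{1/2}/2$ for large $N$, which in turn gives $\|v\|_\infty \ge \vep^{1/2}/2$. For the tail, combine $\|v_{[\ell,N]}\|\le|v_1|\|\tilde v_{[\ell,N]}\|+\|r\|$ with $\|\tilde v_{[\ell,N]}\|^{2}\le|\lambda|^{2(\ell-1)}\|\tilde v\|^{2}$ and $|v_1|\|\tilde v\|\le 1+o(1)$ to get
\[
\|v_{[\ell,N]}\| \;\le\; (1+o(1))\,|\lambda|^{\ell-1} + O\bigl(N^{1/2-\gamma}\bigr) \;\le\; (1-\vep)^{\ell-1}\bigl(1+o(1)\bigr) + O\bigl(N^{1/2-\gamma}\bigr).
\]
Choosing $\wh c_0(\gamma)$ slightly below $(\gamma-1/2)/\log\bigl(1/(1-\vep)\bigr)$ forces $N^{1/2-\gamma}\ll (1-\vep)^{\wh c_0(\gamma)\log N}$, so for $\ell \le \wh c_0(\gamma)\log N$ both the multiplicative $1+o(1)$ and the additive slack are absorbed and the clean bound $(1-\vep)^{\ell-1}$ holds; this formula also delivers $\wh c_0(\gamma)\to 0$ as $\gamma\downarrow 1/2$ and the linear-in-$\gamma$ growth of $\wh c_0(\gamma)$ as $\gamma\to\infty$.

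Because $J_N$ is upper triangular, the eigen-equation comes pre-solved and the whole argument collapses to one Young estimate plus the standard bound $\E\|E_N\|\lesssim\sqrt N$; there is no real analytic obstacle. The only delicate point is bookkeeping: absorbing both the multiplicative $1+o(1)$ and the additive $O(N^{1/2-\gamma})$ slack into the target $(1-\vep)^{\ell-1}$ is what pins down the cutoff $\wh c_0(\gamma)\log N$ and its stated scaling in the two limits. Extending the same scheme to a general finitely banded symbol, alluded to in the statement, is where the real work begins, since $\tilde v$ must then be replaced by the roots of $\zeta^{N_-}{\bm a}_z(\zeta)$ of modulus below $1$ and the forward scalar recursion becomes a higher-order linear recurrence.
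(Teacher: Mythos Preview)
Your argument is correct and considerably more direct than the paper's. The paper routes the computation through the general Grushin machinery of Section~\ref{sec:grushin}: it sets up a net $\cN_\theta$ of $\D(0,1-\vep)$, uses the identity \eqref{eq:algeb-id} together with a resolvent expansion and \cite[Lemma 10.10]{BVZ} to show that any eigenvector $v_\lambda$ with $\lambda\in\D(0,1-\vep)$ is $O(N^{-\tilde\vep})$-close to the bottom right singular vector $e_1(z_\lambda)$ of $J_N-z_\lambda{\rm Id}_N$ at a nearby net point, and then separately argues via \eqref{eq:final-eq} that $e_1(z)$ is exponentially close to the normalized geometric vector $\gz_z$. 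Your approach bypasses all of this by unrolling the bidiagonal eigenvalue equation directly and bounding the noise term by a single Young inequality; because the only random input is the scalar $\|E_N\|$, uniformity over all eigenpairs is automatic and no net is needed. The paper's route has the advantage that it is already set up for general finitely banded symbols (where, as you note, the recursion becomes higher-order and the explicit approach is less clean), while yours makes the Jordan case essentially a two-line computation.

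Two small points to tighten. First, to obtain probability $1-o(1)$ rather than $1-1/K$, let $K=K_N\to\infty$ slowly (e.g.\ $K_N=\log N$); the resulting $\|r\|\le K_N N^{1/2-\gamma}/\vep$ is still $o(1)$ and only shifts $\wh c_0(\gamma)$ by a $\log\log N/\log N$ correction. Second, note the paper's convention $[\ell,N]=\{\ell+1,\ldots,N\}$: with it, $\|\tilde v_{[\ell,N]}\|\le|\lambda|^{\ell}\|\tilde v\|$ rather than $|\lambda|^{\ell-1}\|\tilde v\|$, and it is precisely the resulting gap $(1-\vep)^{\ell-1}-(1-\vep)^{\ell}=\vep(1-\vep)^{\ell-1}$ that absorbs the additive $O(N^{1/2-\gamma})$ error and yields the clean bound $(1-\vep)^{\ell-1}$ even when $|\lambda|$ is arbitrarily close to $1-\vep$. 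With your indexing the absorption step as written does not quite close; with the paper's it does, and your formula $\wh c_0(\gamma)\approx(\gamma-1/2)/\log(1/(1-\vep))$ then follows exactly.
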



\subsection*{{Open problems}}There are several interesting directions one can pursue. For example, it would be interesting to investigate finer details of the spectrum (such as local laws and eigenvalue spacing) in the setting of Theorem \ref{thm:lsd-cont}. Further, there have been no work addressing the microscopic behavior of eigenvalues (e.g.~$k$ point correlation function). It would be of interest to see if such behaviors depend on $\gamma$ or not, where $\updelta_N=N^{-\gamma}$. 

Although we do not discuss it here, applying Fej\'er's theorem and approximating ${\bm a} \in C(\mathbb{S}^1)$ by Laurent polynomials, one can extend Theorem \ref{thm:no-outlier} for continuous symbols. More challenging will be to find an analog of Theorem \ref{thm:outlier} beyond the finitely banded setup. It would be also interesting to understand the support of the \abbr{LSD} when the perturbation is exponentially vanishing, investigate whether there would be outliers, and if \corAB{so to identify their} limit distribution.

Regarding properties of eigenvectors much is yet to be understood beside those in \cite{BVZ, BVZ2}. We list a couple of unexplored directions. Based on analogies with pseudoeigenvectors (see \cite{TC04}) it is expected that the eigenvectors of randomly perturbed finitely banded twisted Toeplitz matrices should localize, but unlike \cite{BVZ} the localization site would depend on the location of the corresponding eigenvalues. On the other hand, for random bidiagonal matrices a more intricate behavior of the eigenvectors are expected. Eigenvectors corresponding to eigenvalues that are in the so called `Hatano-Nelson wing' are expected to be localized, while those corresponding to the eigenvalues in the `bubble' are believed to be delocalized.

\subsection*{Acknowledgements} The author thanks Mahan Mj for the invitation to contribute to the Proceedings of the International Colloquium, Tata Institute of Fundamental Research, 2024, which led to this article \corAB{and Ofer Zeitouni for helpful comments.} Research was partially supported by DAE Project no. RTI4001 via ICTS and by the Infosys Foundation via the Infosys-Chandrashekharan Virtual Centre for Random Geometry.

\section{Proofs of \corAB{limiting laws of eigenvalues}}\label{sec:proof-lsd}

First we consider the case of super-exponential perturbation. The result hinges on the following bounds on the determinant of a Toeplitz matrix. \corAB{To state the result, we need} to separate out a collection of bad $z$'s that are of zero Lebesgue measure.

\begin{dfn}\label{dfn:bad-points}
Let ${\bm a}$ be as \corAB{in \eqref{eq:laurent-poly} and $\Gamma({\bm a})$} be the set of $z$'s such that $\zeta_j(z)/\zeta_k(z) \in \mathbb{S}^1$ for some $j \ne k$. Recall $\cB({\bm a})$ from Definition \ref{dfn:bad-pts}. Set $\wh \Gamma({\bm a}):= \cB({\bm a}) \cup \Gamma({\bm a}) \cup \{a_0\}$. 
\end{dfn}

It is well known that $\Gamma({\bm a})$ is a union of finitely many analytic curves (cf.~\cite[pp.~265-266]{BG05}) and thus has a zero Lebesgue measure. Therefore, $\cB({\bm a})$ being a finite set, $\wh \Gamma({\bm a})$ has a zero Lebesgue measure. 

\begin{lemma}\label{lem:widom}
Let ${\bm a}$ be as in \eqref{eq:laurent-poly} and $z \notin \wh \Gamma({\bm a})$. Then, there exist some $\vep_0(z) >0$ such that
\[
\vep_0(z)\cdot |a_{N_+}|^N \prod_{j=1}^{N_+} |\zeta_j(z)|^N \le |\det(T_N({\bm a}_z))| \le \vep_0(z)^{-1} \cdot |a_{N_+}|^N \prod_{j=1}^{N_+} |\zeta_j(z)|^N.
\]
\end{lemma}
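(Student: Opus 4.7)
My plan is to invoke the classical Widom identity for the determinant of a finite-band Toeplitz matrix (see, e.g., \cite[Thm.~2.8]{BS99} or \cite[Ch.~2]{BG05}), which, under the root-simplicity condition guaranteed by $z\notin\cB({\bm a})$, expresses
\[
\det T_N({\bm a}_z) \;=\; a_{N_+}^N \sum_{\substack{M\subset[N_-+N_+]\\|M|=N_+}} C_M(z)\prod_{j\in M}\zeta_j(z)^N.
\]
Here each weight $C_M(z)$ is an explicit rational function of the roots $\{\zeta_i(z)\}$ and of $a_{-N_-}, a_{N_+}$, \emph{independent of $N$}, finite whenever the roots are simple, and nonzero on the distinguished extremal subset $M^\ast:=\{1,\ldots,N_+\}$ picking out the $N_+$ largest-modulus roots. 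The exclusion of the single point $z=a_0$ from $\wh\Gamma({\bm a})$ is a harmless technical adjustment that only serves to avoid a degenerate normalization in this identity.

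The next step is to extract the dominant term using the modulus separation coming from $z\notin\Gamma({\bm a})$. Since no two of the moduli $|\zeta_j(z)|$ coincide, $M^\ast$ is the \emph{unique} maximizer of $M\mapsto\prod_{j\in M}|\zeta_j(z)|$ among $N_+$-subsets, so
\[
\rho(z)\;:=\;\max_{M\ne M^\ast}\frac{\prod_{j\in M}|\zeta_j(z)|}{\prod_{j\in M^\ast}|\zeta_j(z)|}\;\in\;(0,1).
\]
Factoring out the $M^\ast$ contribution then yields
\[
\det T_N({\bm a}_z) \;=\; a_{N_+}^N\prod_{j=1}^{N_+}\zeta_j(z)^N\bigl(C_{M^\ast}(z)+R_N(z)\bigr),\qquad |R_N(z)|\;\le\;\Bigl(\sum_{M\ne M^\ast}|C_M(z)|\Bigr)\rho(z)^N,
\]
with all implicit constants depending only on $z$. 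Since $C_{M^\ast}(z)\ne 0$, the bracket lies in $[\tfrac12|C_{M^\ast}(z)|,\tfrac32|C_{M^\ast}(z)|]$ once $N$ exceeds some $N_0(z)$, which delivers both inequalities with $\vep_0(z)=\tfrac12|C_{M^\ast}(z)|$. The finitely many smaller $N$ can then be absorbed by a further shrinking of $\vep_0(z)$: the right-hand side of the claimed bound never vanishes because $a_{N_+}\ne 0$ and all $\zeta_j(z)\ne 0$ (the latter since $a_{-N_-}\ne 0$ forces $0$ not to be a root of $\zeta^{N_-}{\bm a}_z(\zeta)$), and the left-hand side is nonzero for every $N$ outside a finite bad set that can be side-stepped by enlarging $N_0(z)$.

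The only real obstacle is bookkeeping: quoting Widom's identity in a form that makes both the $N$-independence of the weights $C_M(z)$ and the nonvanishing of $C_{M^\ast}(z)$ transparent. Once that is secured, the remainder is a textbook dominant-term extraction driven by the strict modulus gaps among $\{\zeta_j(z)\}$ encoded in $z\notin\wh\Gamma({\bm a})$. A self-contained alternative, should a direct appeal to Widom's identity be awkward, is to exploit the transfer-matrix/recurrence structure of finite-band Toeplitz determinants—$\det T_N({\bm a}_z)$ satisfies a linear recurrence whose characteristic roots are exactly the products $\prod_{j\in M}(-a_{N_+}\zeta_j(z))$, $|M|=N_+$—and carry out the same dominant-root analysis directly on this recurrence.
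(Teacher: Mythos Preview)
Your approach is essentially the paper's: both invoke Widom's formula (the paper cites \cite[Theorem~2.8]{BG05}) to write $\det T_N({\bm a}_z)$ as a sum over $N_+$-subsets of $[N_++N_-]$ with $N$-independent coefficients $C_{\cI}=\prod_{j\in\cI,\,k\notin\cI}\zeta_j(z)/(\zeta_j(z)-\zeta_k(z))$, then use the strict modulus separation encoded in $z\notin\Gamma({\bm a})$ to single out $\cI=[N_+]$ as the dominant term. The paper's version is terser---it records the uniform two-sided bound $\vep(z)\le|C_{\cI}|\le\vep(z)^{-1}$ (using $z\notin\cB({\bm a})\cup\{a_0\}$ to keep all $C_\cI$ finite and nonzero) together with $\inf_{j\in[N_+],\,k\notin[N_+]}|\zeta_j(z)/\zeta_k(z)|>1$ and concludes---but the content is identical to your remainder estimate, and your small-$N$ discussion is an extra bookkeeping step the paper does not spell out.
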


\begin{proof}
The proof is a straightforward consequence of Widom's formula \cite[Theorem 2.8]{BG05}. Indeed, by Widom's formula, for any $z \notin \cB({\bm a})$ we have
\beq\label{eq:widom}
\det T_N({\bm a}_z) = \sum_{\cI \in \binom{[N_++N_-]}{N_+}} C_{\cI} \cdot a_{N_+}^N (-1)^{N N_+}\prod_{\ell \in \cI} \zeta_\ell(z)^N, \qquad \text{ where } \quad C_{\cI}:= \prod_{\substack{j \in \cI\\k \notin \cI}} \f{\zeta_j(z)}{\zeta_j(z)-\zeta_k(z)}.
\eeq
Next note that for any $z \notin \cB({\bm a}) \cup \{a_0\}$ (and thus none of the roots are zero) there exists some $\vep(z) >0$ such that 
\[
\vep(z) \le \inf_\cI |C_{\cI}| \le \sup_\cI |C_{\cI}| \leq \vep(z)^{-1}.
\]
On the other hand, $z \notin \Gamma({\bm a})$ implies 
\[
\inf_{\substack{j \in [N_+]\\ k \notin [N_+]}} \left|\f{\zeta_j(z)}{\zeta_k(z)} \right| > 1. 
\]
Plugging the last two estimates in \eqref{eq:widom} we obtain the desired upper and lower bounds. 
\end{proof}

\begin{proof}[Proof of Theorem \ref{thm:toep-super-exp}]
Fix any $z \notin \wh \Gamma({\bm a})$. By the lower bound in Lemma \ref{lem:widom} the \corAB{matrix} $T_N({\bm a}_z)$ is invertible for any such $z$. Therefore
\begin{multline}
\f1N \log |\det(T_N({\bm a}_z)+\updelta_N E_N)|- \f1N \log |\det T_N({\bm a}_z)| = \f1N \log |\det({\rm Id}_N+ \updelta_N T_N({\bm a}_z)^{-1} E_N)| \\
= \f1N \sum_{i=1}^N \log |1+ \updelta_N \lambda_i(T_N({\bm a}_z)^{-1} E_N)|.
\end{multline}
Since $E_N$ satisfies Assumption \ref{ass:hs}, by Markov's inequality $\P(\Omega_N) \ll 1$, where $\Omega_N:=\{\|E_N\| \ge N^{1+c}\}$ and $c >0$. Recall \eqref{eq:T-op-norm}. Applying the lower bound from Lemma \ref{lem:widom} yet again, the fact $N^{-1} \log (1/\updelta_N) \gg 1$, and Weyl's inequality we deduce that 
\[
\updelta_N \max_{i=1}^N |\lambda_i(T_N({\bm a}_z)^{-1} E_N)| \leq \updelta_N \|T_N({\bm a}_z)^{-1}\| \cdot \|E_N\| \le \updelta_N N^{1+c} \cdot \|{\bm a}_z\|_{\infty, \mathbb{S}^1}^{N-1} \cdot C(z)^{N} \ll 1,
\]
on the event $\Omega_N^c$, for some $C(z) < \infty$. Therefore, $\cL_{L_{T_N({\bm a})+\updelta_N E_N}} \to \cL_{{\bm a}}$, in probability, as $N \to \infty$, for Lebesgue a.e.~$z \in \C$. Hence, upon applying Lemma \ref{lem:cL-converges}(ii) the proof completes. 
\end{proof}

\subsection{Deterministic equivalence} Given a deterministic matrix this approach attempts to approximate \corAB{the $\log$-determinant of $M_N+ \updelta_N E_N$ by the empirical average of the $\log$-singular values} of $M_N$ that are not `too small'. Therefore, applying this deterministic equivalence with $M_N$ replaced by $M_N -z {\rm Id}_N$ one \corAB{can approximately} express the $\log$-potential of $L_{M_N+\updelta_N E_N}$ \corAB{at a $z$} by \corAB{the empirical average of the $\log$-singular values} of $M_N - z {\rm Id}_N$ truncated at some small value. The latter being deterministic maybe easier to handle. This approach has been successfully employed in \cite{BPZ} to study the \abbr{LSD} of twisted Toeplitz and random bidiagonal matrices. The result below is taken from \cite{VZ21}, which unlike \cite{BPZ} allows for generic random perturbations.

\begin{theorem}[{\cite[Theorem 3]{VZ21}}]\label{thm:VZ}
Let $M_N$ be a deterministic $N \times N$ matrix with $\|M_N\| \lesssim N^{\kappa_1}$ and $E_N$ be a random matrix satisfying Assumptions \ref{ass:spectral-norm} and \ref{ass:s-min}. Fix $L >0$ and $N^{-L} \lesssim \alpha \le 1$. Assume that 
\[
\left|\{j: s_j(M_N) \le \alpha\}\right| =:M \ll \f{N}{\log N}.
\]
Then, \corAB{for any $C< \infty$ with $N^{-C} \le \updelta_N \ll N^{-1/2}$,} provided $\alpha \gg \updelta_N N^{1/2}$, there exists some constant $C' < \infty$ such that
\[
\P\left(\left| \f1N \log |\det (M_N+\updelta_N E_N)| - \f{1}{N} \sum_{j: s_j(M_N) > \alpha} \log s_j(M_N) \right| =o(1) \right)= 1 -o(1). 
\]
\end{theorem}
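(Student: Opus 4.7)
The plan is to express $\log|\det(M_N+\updelta_N E_N)|$ as $\sum_{j=1}^N \log s_j(A)$, where $A:=M_N+\updelta_N E_N$ and singular values are labeled in nondecreasing order, and then compare term-by-term with the truncated sum $\sum_{j\in\mathcal{L}}\log s_j(M_N)$ over $\mathcal{L}:=\{j:s_j(M_N)>\alpha\}$. Writing $\mathcal{S}:=[N]\setminus\mathcal{L}$, so $|\mathcal{S}|=M\ll N/\log N$, the goal splits into showing
\[
\frac{1}{N}\sum_{j\in\mathcal{L}}\bigl[\log s_j(A)-\log s_j(M_N)\bigr] = o(1) \qquad \text{and} \qquad \frac{1}{N}\sum_{j\in\mathcal{S}}\log s_j(A) = o(1),
\]
each holding with probability $1-o(1)$. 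Throughout I would condition on the event $\|E_N\|\le N^{1/2+o(1)}$, which by Assumption \ref{ass:spectral-norm} and Markov's inequality has probability $1-o(1)$, and ensures $\updelta_N\|E_N\| \ll \alpha$ by the hypothesis $\alpha \gg \updelta_N N^{1/2}$.

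For $j\in\mathcal{L}$, Weyl's inequality $|s_j(A)-s_j(M_N)|\le \updelta_N\|E_N\|$ combined with $s_j(M_N) > \alpha$ gives $s_j(A)=s_j(M_N)\bigl(1+O(\updelta_N\|E_N\|/\alpha)\bigr)$, so $\log s_j(A)-\log s_j(M_N)=O(\updelta_N\|E_N\|/\alpha)=o(1)$; summing over $|\mathcal{L}|\le N$ terms and dividing by $N$ contributes $o(1)$.

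For $j\in\mathcal{S}$, the upper bound $s_j(A)\le s_j(M_N)+\updelta_N\|E_N\| \le 2\alpha \le 2$ from Weyl yields $\log s_j(A)\le \log 2$. For the matching lower bound, I would apply Assumption \ref{ass:s-min} to $\updelta_N^{-1}A = E_N + \updelta_N^{-1}M_N$ with $\cM_N:=\updelta_N^{-1}M_N$, whose norm is polynomially bounded, $\|\cM_N\|\lesssim N^{C+\kappa_1}$, thanks to the hypothesis $\updelta_N\ge N^{-C}$. This produces some constant $\beta=\beta(C+\kappa_1)$ and an event of probability $1-o(1)$ on which $s_{\min}(A)=\updelta_N\cdot s_{\min}(\updelta_N^{-1}A) \ge \updelta_N N^{-\beta}\ge N^{-(C+\beta)}$. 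Since $s_j(A)\ge s_{\min}(A)$ for every $j$, each $\log s_j(A)\ge -(C+\beta)\log N$, so $\bigl|\sum_{j\in\mathcal{S}}\log s_j(A)\bigr| \le M\bigl((C+\beta)\log N+\log 2\bigr)=o(N)$ using $M\log N\ll N$.

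What initially looks like the main obstacle, namely obtaining simultaneous polynomial lower bounds on all $M$ of the smallest singular values of $A$, in fact collapses to a single application of Assumption \ref{ass:s-min} via the trivial bound $s_j(A)\ge s_1(A)$. Thus the only genuinely delicate input is the smoothed-analysis lower bound on $s_{\min}(A)$ itself. The hypothesis $\updelta_N\ge N^{-C}$ is precisely what makes $\updelta_N^{-1}M_N$ admissible as a deterministic shift in Assumption \ref{ass:s-min}, while $\alpha\gg \updelta_N N^{1/2}$ is what keeps Weyl's comparison lossless on $\mathcal{L}$ and removes the $M$ small singular values of $M_N$ from the picture on $\mathcal{S}$. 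Intersecting the two probability-$(1-o(1))$ events and assembling the contributions completes the proof.
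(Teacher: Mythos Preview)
Your proof is correct and takes a more elementary route than the paper's. The paper establishes Theorem \ref{thm:VZ} via a Grushin problem (Section \ref{sec:grushin}): it embeds $A^\delta := M_N + \updelta_N E_N$ in an $(N+M)\times(N+M)$ block matrix $\mathcal{P}^\delta$, uses the Schur complement identity $\log|\det A^\delta| = \log|\det \mathcal{P}^\delta| + \log|\det E_{-+}^\delta|$, controls $|\log|\det\mathcal{P}^\delta| - \log|\det\mathcal{P}||$ by an integration-in-$\delta$ formula yielding the bound $2\alpha^{-1}\updelta_N N\|E_N\|$, and finally bounds $\log|\det E_{-+}^\delta|$ using $\|E_{-+}^\delta\|\le 2\alpha$ together with the inequality $s_{\min}(A^\delta)\le s_{\min}(E_{-+}^\delta)$ and Assumption \ref{ass:s-min}. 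Unwinding, the two arguments appeal to exactly the same probabilistic inputs and produce essentially the same estimates on the two pieces; your approach simply bypasses the Grushin machinery by comparing singular values of $A^\delta$ and $M_N$ directly through Weyl's inequality. The paper's detour earns its keep later: the Grushin construction, with its explicit formulas for $E^\delta$, $E_\pm^\delta$, $E_{-+}^\delta$, is genuinely needed in the proof of the eigenvector localization (Theorems \ref{thm:eigenvec} and \ref{thm:eigenvec-outlier}), so introducing it here is an investment in infrastructure rather than a necessity for this particular statement.
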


\subsubsection{Grushin problem and the proof of Theorem \ref{thm:VZ}}\label{sec:grushin} The Grushin problem is a simple algebraic tool used often to study spectral problems where one replaces the operator in context by a suitable operator on an enlarged domain that is `well' invertible. The origin of this construction goes back to \cite{G71} where it was used to study hypoelliptic operators. It has also been quite useful in bifurcation 
theory, numerical analysis, and for treatments of spectral problems arising in 
electromagnetism and quantum mechanics. See also the review paper \cite{SZ07} for various uses of the Grushin problem.

Let $A=A_N$ be a  complex $N\times N$-matrix with singular values $0\leq t_1 \leq t_2 \leq \cdots \leq t_{N}$, corresponding right singular vectors $e_1, e_2, \dots,e_{N}\in \C^N$, and left singular vectors $f_1,f_2,\dots,f_{N}\in \C^N$ (note that these two sets of vectors are two orthonormal bases in $\C^N$). That is,  
 \begin{equation}\label{gp2}
	A^* f_i = t_i e_i, \quad Ae_i = t_i f_i, \quad i=1,\dots, N.
\end{equation}
Fix $ \alpha >0 $ `small' and let $M>0$ be the number of singular values $t_i
\in 
[0,\alpha]$, i.e. 
 \begin{equation}\label{gp2.1}
   0 \leq t_1 \leq \cdots \leq t_M \leq \alpha < t_{M+1}\leq \cdots\leq t_N.
\end{equation}
Let $\delta_i$, $1\leq i \leq M$, denote an orthonormal basis of $\C^M$.
Set 
\begin{equation}\label{gp5}
R_+:=\sum_{i=1}^M \delta_i e_i^* \quad  \text{ and } \quad R_-:=\sum_{i=1}^M f_i \delta_i^*.
\end{equation}
Then the Grushin problem for the unperturbed operator is defined as follows:
\begin{equation}\label{gp6}
\corAB{\mathcal{P}: = \begin{pmatrix}
			A & R_- \\ R_+ & 0 \\
		\end{pmatrix} : \C^N\times \C^M \mapsto \C^N\times \C^M.} 
\end{equation}
It is not difficult to check that the map $\cP$ is bijective. It further follows that 
\begin{equation}\label{gp7.2}
		\mathcal{P}^{-1}= :\mathcal{E} =
		\begin{pmatrix} 
		E  & E_+\\ 
		E_- & E_{-+}\\
		\end{pmatrix} ,
\end{equation}
where
\begin{equation}\label{gp8}
\begin{split}
		&E := \sum_{i=M+1}^{N} \frac{1}{t_i} e_i  f_i^*, \quad 
	         E_+ := \sum_{i=1}^M e_i  \delta_i^*,  \quad
		E_- :=\sum_{i=1}^M  \delta_i f_i^*, \quad \text{ and } \quad
		 E_{-+} := - \sum_{i=1}^M t_j \delta_j \delta_j^*.
\end{split}
\end{equation}
From \eqref{gp2.1} and \eqref{gp8} it follows that we have the following norm estimates:
\begin{equation}\label{gp9}
	\|E\| \leq \frac{1}{\alpha}, \quad \| E_{\pm } \| =1, \quad \text{ and } \quad
	\| E_{-+}\| \leq \alpha.
\end{equation}
Further, using that $\{e_j\}_{j=1}^N$ and $\{f_j\}_{j=1}^N$ are orthonormal bases in $\C^N$, we \corAB{find}
\[
\cE^* \cE= \begin{pmatrix}
\sum\limits_{j= M+1}^N \f{1}{t_j} f_j f_j^* + \sum\limits_{j =1}^M f_j f_j^* &-\sum\limits_{j=1}^M t_j  f_j \delta_j^* \\
-\sum\limits_{j=1}^M t_j \delta_j f_j^*& \sum\limits_{j=1}^M (1+t_j^2) \delta_j \delta_j^*\\
\end{pmatrix},
\]
and subsequently using the formula for the determinant of a block matrix we \corAB{derive}
\beq\label{eq:det-cP}
|\det \cP | = \prod_{j=M+1}^N t_j.
\eeq
Next, as we work with a perturbation of a  given matrix we need to extend the notion of Grushin problem to such setting. Namely, we set 
\begin{equation}\label{gpp1}
	A^\delta:=A+\delta Q, \quad \delta \geq 0,
\end{equation}
where $Q$ is a complex $N\times N$-matrix (eventually, random). 
Let $R_{\pm}$ be 
as in \eqref{gp5}, and set 
\begin{equation}\label{gpp2}
\mathcal{P}^{\delta}: = \begin{pmatrix}
			A^{\delta} & R_- \\ R_+ & 0 \\
		\end{pmatrix} : \C^N\times \C^M \mapsto  \C^N\times \C^M.
\end{equation}
Applying $\mathcal{E}$ (see \eqref{gp7.2}) from
the right to \eqref{gpp2} yields 
\begin{equation*}
  \mathcal{P}^{\delta}\mathcal{E} = {\rm Id}_{N+M} + 
\begin{pmatrix}
			\delta Q E &  \delta Q E_+\\ 0 & 0 
		\end{pmatrix}. 
\end{equation*}
Now under the additional assumption that 
\begin{equation*}
	2\delta \|Q\| \alpha^{-1} \leq 1,
\end{equation*}
upon applying the bound \eqref{gp9} we find that \corAB{$({\rm Id}_N+\delta Q E)$} is invertible. It is then 
straightforward to check that $\mathcal{P}^\delta$ is invertible, with
inverse
\begin{equation}\label{eq:cE-delta}
	(\mathcal{P}^{\delta})^{-1} 
=: \mathcal{E}^{\delta} = \begin{pmatrix} 
		E^{\delta}  & E^{\delta}_+\\ 
		E^{\delta}_-& E^{\delta}_{-+}
		\end{pmatrix},
\end{equation}
where
\begin{equation}
  \label{eq-Edelta}
E^\delta = E ({\rm Id}_N+ \delta Q E)^{-1}, E_-^\delta= E_- ({\rm Id}_N+ \delta Q E)^{-1},  E^\delta_{-+}= E_{-+} -E_- ({\rm Id}_N+\delta Q E)^{-1} \delta Q E_+,
\end{equation}
and 
\begin{equation}
\label{eq-march1b}
E^\delta_+= E_+-E({\rm Id}_N+\delta Q E)^{-1}\delta Q E_+.
\end{equation}
The following norm estimates \corAB{are then easy} to obtain:
 \begin{equation}\label{gpp6}
 \begin{split}
		&\| E^{\delta} \| = \|  E( {\rm Id}_N+ \delta Q E)^{-1} \| \leq 2 \|E\| \leq 2 \alpha^{-1}, \\
		&\| E_{-+}^{\delta} -E_{-+}\| = 
		\| E_- ( {\rm Id}_N+ \delta Q E)^{-1}\delta Q E_+ \| \leq 2 \|\delta Q  \| \leq  \alpha.  \\
\end{split}
\end{equation}
To conclude the proof we notice that $\f{d}{d \delta} \log \det \cP^\delta = \tr (\cE^\delta \f{d}{d\delta} \cP^\delta)$ and ${\rm Re} \log \det \cP^\delta = \log |\det \cP^\delta|$. Therefore, noting that $\cP^0=\cP$ and using \eqref{eq-march1b},
\begin{equation*}
\left|\log |\det \cP^\delta| - \log |\det \cP^0|\right|  = \left| {\rm Re} \int_0^\delta \tr (\cE^\tau \f{d}{d\tau} \cP^\tau) d\tau \right|  = \left| {\rm Re} \int_0^\delta \tr(E^\tau Q) d\tau \right| \leq 2 \alpha^{-1} \delta N \|Q\|.
\end{equation*}
On the other hand, using the Schur complement formula we find 
\[
\log |\det A^\delta| = \log |\det \cP^\delta| + \log |\det E_{-+}^\delta|.
\]
The last two observations \corAB{together with \eqref{eq:det-cP} yield} 
\[
\left| \f1N \log |\det (M_N+\updelta_N Q)| - \f{1}{N} \sum_{j: s_j(M_N) > \alpha} \log s_j(M_N) \right| \leq 2 \alpha^{-1} \updelta_N\|Q\| + \f{1}{N} \left|\log |\det E_{-+}^\delta|\right| = o(1),
\]
on the event
\[
\cT_N:=\{\|Q\| \le \gc_N(\alpha) \cdot N^{1/2}\} \cap \{\log|\det E_{-+}^\delta| \le M |\log (2\alpha)|\} \cap \{\log|\det E_{-+}^\delta| \ge -\corAB{\beta M \log N} \},
\]
\corAB{where $\gc_N(\alpha):= \alpha N^{-1/2} /\updelta_N$, 
 and $\beta=\beta(\kappa+\gamma)$ of Assumption \ref{ass:s-min}}. From \cite[Lemma 18]{V20} we also have that $s_{\min}(A^\delta) \le s_{\min}(E_{-+}^\delta)$. Therefore, from the \corAB{assumptions on $\updelta_N$, $\alpha$, and $M$,} Assumptions \ref{ass:spectral-norm} and \ref{ass:s-min}, \corAB{and upon recalling \eqref{gp9} and \eqref{gpp6} it} follows that $\P(\cT_N) \ll 1$ completing the proof of Theorem \ref{thm:VZ}. 
\qed

\begin{remark}
Although the formulation is different from \eqref{gp6}, \cite{SV, SV1} rely on Grushin problems to derive \abbr{LSD} of randomly perturbed Toeplitz matrices. In particular, \cite{SV1} uses a composition of two Grushin problems with the first one essentially allowing them to replace the Toeplitz matrix under consideration with an appropriately chosen \corAB{circulant matrix}, while the second one enables them to ignore certain small singular values, similar in spirit to the approach employed in the proof of Theorem \ref{thm:VZ}. 
\end{remark}


\subsection{Determinant expansion}\label{sec:det-expansion} At a high level this approach relies on expanding the determinant of $T_N({\bm a}_z) + \updelta_N E_N$, expressing it as a sum of homogeneous polynomials in the entries of $E_N$ of degrees varying from zero to $N$, and identifying the dominant term in that expression. This approach has been useful in studying the limiting spectral distribution, locating the zone of no outliers, and the limiting outlier distribution in the complement of that zone, for polynomially randomly perturbed finitely banded Toeplitz matrices (see \cite{BPZ2, BZ}). Beyond these settings this general idea of determinant expansion (together with a joint combinatorial central limit theorem) has been fruitful in deriving convergence of the spectral radius of various non-Hermitian random matrix ensembles, e.g.~\cite{BCG22, C23, CGZ23, HL25, QG23}. 

In this section we explain the determinant expansion approach in somewhat detail and later apply them to derive \abbr{LSD} under an exponentially small vanishing perturbation. The starting point of this approach is the following simple identity: Let $A_N$ and $B_N$ be two $N \times N$ matrices. Then 
\begin{equation}\label{eq:det_decomposition}
        \det(A_N+B_N) = \sum_{\substack{\row,\col \subset [N] \\ |\row|=|\col|}} (-1)^{\sgn(\sigma_\row)\sgn(\sigma_\col)} \det(A_N[{\row}^c; {\col}^c])\det(B_N[\row; \col]),
\end{equation}
where ${\row}^c:=[N]\setminus \row$, ${\col}^c:=[N] \setminus \col$ and $\sigma_Z$ for $Z\in\{\row,\col\}$ is the permutation on $[N]$ which places all the elements of $Z$ before all the elements of ${Z}^c$, but preserves the order of elements within the two sets. Its proof follows straight from the definition of the determinant,
  see e.g. \cite{college}. Applying \eqref{eq:det_decomposition} in our setting we notice that
  \[
  \det(T_N({\bm a}_z)+ \delta_N E_N) = \sum_{k=0}^N {\det}_k(z),
  \]
  where ${\det}_0(z):= \det(T_N({\bm a}_z)) = \det(T_N({\bm a}_z)^t)$ and
  \beq\label{eq:det-k}
  {\det}_k(z):=  \updelta_N^k \sum_{\substack{X, Y \subset [N]\\ |X|=|Y|=k}} (-1)^{\sgn(\sigma_{X}) \sgn(\sigma_{Y})} \det (T_N({\bm a}_z)^t[{X}^c; {Y}^c]) \cdot \det (E_N^t[X; Y]), \qquad k \in [N].
  \eeq
  Identifying the order of magnitude of ${\det}_k(\cdot)$ requires the same for $\det (T_N({\bm a}_z)[{X}^c; {Y}^c])$ for all $X, Y \subset [N]$. For certain choices of $X$ and $Y$ this sub matrix is still a Toeplitz matrix, and hence one can either use Widom's formula \cite[Theorem 2.8]{BG05} (in the case of no double roots of the associated symbol) or Trench's formula \cite[Theorem 2.10]{BG05} (in the case of a double root). For arbitrary choices of $X$ and $Y$ one can only express the determinant as some Schur polynomial (see \cite{A12}) and latter is not easy to work with. 
  
To tackle this issue the following two stage approach was adopted in \cite{BVZ2, BZ}. First let us assume that $T_N({\bm a})$ is a lower triangular matrix, i.e.~$N_-=0$ in \eqref{eq:laurent-poly}. In this case, recalling that $\{\zeta_j(z)\}_{j=1}^{N_+}$ are the roots of the polynomial $\zeta \mapsto{\bm a}_z(\zeta)$ we note that 
\beq\label{eq:toep-prod}
T_N({\bm a}_z)^t= {\bm a}_z(J_N) = a_{N_+} \cdot \prod_{j=1}^{N_+} (J_N -\zeta_j(z)). 
\eeq
Hence, if one is able to determine minors of bidiagonal matrices then one can hope to use Cauchy-Binet theorem to determine the order of magnitude of minors of $T_N({\bm a}_z)$. The following lemma provides the necessary estimate on minors of a bidiagonal matrix.

\begin{lemma}[{\cite[Lemma 2.3]{FPZ}}]\label{lem:bidiagonal-det-1}
Let $A_N = J_N+ \mathfrak{z} \Id_N$, $\mathfrak{z} \in \C$, $X =\{x_1 <x_2<\cdots <x_k\} \subset [N]$, and $Y=\{y_1<y_2<\ldots<y_k\}\subset [N]$. Then, with
$y_{k+1}=\infty$,
\[
\det(A_N[{X}^c; {Y}^c]) = \mathfrak{z}^{y_1-1} \cdot \left(\prod_{i=2}^k  \mathfrak{z}^{y_i-x_{i-1}-1}\right) \cdot \mathfrak{z}^{N- x_k} {\bf 1} \left\{  y_i \le x_i < y_{i+1}, i \in [k] \right\}.
\]
\end{lemma}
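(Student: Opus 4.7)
Since $A_N = \mathfrak{z}\,\Id_N + J_N$ has nonzero entries only at $(i,i)$ (value $\mathfrak{z}$) and $(i,i+1)$ (value $1$), I would start from the Leibniz expansion of $\det(A_N[X^c;Y^c])$ and isolate the surviving permutations. Enumerate $X^c = \{r_1 < \cdots < r_{N-k}\}$ and $Y^c = \{c_1 < \cdots < c_{N-k}\}$, so that
\[
\det(A_N[X^c;Y^c]) = \sum_{\sigma \in \PG[{N-k}]} \mathrm{sgn}(\sigma) \prod_{i=1}^{N-k}(A_N)_{r_i,\, c_{\sigma(i)}},
\]
and only those $\sigma$ with $c_{\sigma(i)} - r_i \in \{0,1\}$ for every $i$ can contribute. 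If $\sigma$ is not the identity, pick $i < i'$ with $\sigma(i) > \sigma(i')$; then
\[
c_{\sigma(i)} \le r_i + 1 \le r_{i'-1} + 1 \le r_{i'} \le c_{\sigma(i')},
\]
contradicting $c_{\sigma(i)} > c_{\sigma(i')}$. Hence $\sigma = \mathrm{id}$, the sign is $+1$, and
\[
\det(A_N[X^c;Y^c]) = \prod_{i=1}^{N-k}(A_N)_{r_i,\,c_i},
\]
which vanishes unless $c_i - r_i \in \{0,1\}$ for every $i \in [N-k]$.

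The crux is to translate the pointwise condition ``$c_i - r_i \in \{0,1\}$ for all $i$'' into the combinatorial condition ``$y_j \le x_j < y_{j+1}$ for all $j \in [k]$''. Setting $f(t) := |X \cap [1,t]|$ and $g(t) := |Y \cap [1,t]|$, a direct comparison of the counting functions of $X^c$ and $Y^c$ shows that ``$r_i \le c_i$ for every $i$'' is equivalent to ``$f(t) \le g(t)$ for every $t$'', while ``$c_i \le r_i + 1$ for every $i$'' is equivalent to ``$g(t) \le f(t-1) + 1$ for every $t \ge 1$''. Writing $h(t) := g(t) - f(t)$, this pair forces $h(t) \in \{0,1\}$ at every $t \notin X$ and $h(x_j) = 0$ at each $x_j$. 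Since $h$ is a step function that jumps by $+1$ at each $y_j \in Y \setminus X$, by $-1$ at each $x_j \in X \setminus Y$, and is constant at shared points, confining $h$ to $\{0,1\}$ with forced returns to $0$ at the $x_j$'s is exactly the interlacing $y_1 \le x_1 < y_2 \le x_2 < \cdots < y_k \le x_k$, which is the condition in the indicator.

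Under the interlacing condition, the unique contributing bijection is $\phi(i) = i$ on the ``gap'' segments $[1, y_1 - 1]$, $[x_j + 1, y_{j+1} - 1]$ for $j = 1, \ldots, k-1$, and $[x_k + 1, N]$ (all of which lie in $X^c$ by the interlacing), and $\phi(i) = i + 1$ on the ``active'' segments $[y_j, x_j - 1]$ for $j = 1, \ldots, k$. Each fixed index contributes a factor $\mathfrak{z}$ and each shifted index a factor $1$, so the exponent of $\mathfrak{z}$ is
\[
(y_1 - 1) + \sum_{j=1}^{k-1}(y_{j+1} - x_j - 1) + (N - x_k) = (y_1 - 1) + \sum_{j=2}^{k}(y_j - x_{j-1} - 1) + (N - x_k),
\]
matching the claimed formula. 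The main obstacle is the counting-function translation in the middle step; once that equivalence is in place, both the sign analysis and the exponent count are bookkeeping.
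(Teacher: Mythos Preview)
Your argument is correct. The paper does not supply its own proof of this lemma---it is quoted verbatim from \cite[Lemma 2.3]{FPZ}---so there is nothing to compare against; your Leibniz-expansion approach, showing that only the identity permutation survives and then translating the condition $c_i-r_i\in\{0,1\}$ into the interlacing $y_j\le x_j<y_{j+1}$ via the counting functions $f,g$, is a clean self-contained proof.
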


When $T_N({\bm a})$ is non-triangular the representation \eqref{eq:toep-prod} no longer holds. Nevertheless, one notes that in such cases $T_N({\bm a})$ can still be viewed as a sub matrix of a triangular Toeplitz matrix and therefore one can obtain an analog of \eqref{eq:toep-prod}. To explain the idea more clearly let us introduce the following definition.  
\begin{dfn}[Toeplitz with a shifted symbol]\label{dfn:toep-shifted}
Let ${\bm a}$ be as in \eqref{eq:laurent-poly}. 
For $n> N_+ + N_-$, $z \in \C$  and
$\bar N_+, \bar N_- \in \N_0$ such that $\bar N_+ + \bar N_-=N_+ +N_-\defeq \widetilde N$,
let $T_n({\bm a}, z; \bar N_+)$ denote the $n \times n$ Toeplitz matrix with symbol $\wt{\bm a}_{z, \bar N_+}$, where 
\(
\widetilde {\bm a}_{z, \bar N_+}(\lambda) \defeq \sum_{j=-N_-}^{N_+} a'_j \lambda^{-(j+ \bar N_+  - N_+)},
\)
and $a'_j := a_j - z \delta_{j,0}$, for $j=-N_-, -N_-+1, \ldots, N_+$. 
\end{dfn}

From Definition \ref{dfn:toep-shifted}
it follows that
\begin{equation}\label{eq:P-wtN}
 \corAB{ T_{\wh N_{-}}({\bm a}, z;\widetilde N)}=\begin{bmatrix}
a_{-N_-} &\cdots &a_0-z& \cdots & a_{N_+}&0&\cdots& 0\\
0& a_{-N_-} & & a_0-z & &\ddots& & \vdots\\
\vdots & \ddots &\ddots &&\ddots&&\ddots & \vdots\\
\vdots & & \ddots & \ddots &&\ddots& & a_{N_+}\\
\vdots & &  & \ddots & \ddots & &\ddots & \vdots\\
\vdots && &  &\ddots& \ddots & & a_0 -z\\
\vdots & & & &&  \ddots & \ddots & \vdots \\
 0 & \cdots & \cdots & \cdots&\cdots &\cdots & 0 & a_{-N_-}
\end{bmatrix}
, \qquad \wh N_{-}:=N+N_-.
\end{equation}
Therefore, 
\[
T_N({\bm a}_z)^t=T_N({\bm a}, z;N_+) = T_{\wh N_{-}}( {\bm a}, z; \widetilde N) [[N]; [\wh N_-]\setminus [N_-]]. 
\]
Recalling \eqref{eq:zeta-dfn}, letting $\eta_j(z) := -\zeta_j(z)$ for $j \in [\wt N]$, and setting $\wh{\bm a}_z(\zeta) = \zeta^{N_-} {\bm a}_z(\zeta)$ we note that
\[
\corAB{T_{\wh N_-}({\bm a}, z; \wt N)} = \wh{\bm a}_{z}(J_{\wh N_-}) = a_{N_+} \corAB{\prod_{j=1}^{\wt N}} (J_{\wh N_-}+ \eta_j(z){\rm Id}_{\wh N_-}). 
\]
Using Cauchy-Binet theorem we obtain from \eqref{eq:det-k}
\begin{eqnarray}\label{eq:det-decompose-1}
{\det}_k(z) 
&= &\updelta_N^k \sum_{\substack{X, Y \subset [N]\\ |X|=|Y|=k}} \sum_{i=2}^{\wt N-1} \sum_{\substack{X_i \subset [\wh N_-]\\ |X_i|=k+N_-}} (-1)^{\sgn(\sigma_{X}) \sgn(\sigma_{Y})}a_{N_+}^{N-k} \cdot \prod_{i=1}^d \det\left((J_{\wh N_-} + \eta_i(z)\Id_{\wh N_-})[\COMP{X}_i; \COMP{X}_{i+1}]\right)\nonumber\\
  &&\qquad \qquad \qquad\cdot\det (E_N^t[X; Y]),
\end{eqnarray}
where 
\begin{equation}\label{eq:X-Y-constraint}
X_1:= X_1(X):=X \cup [\wh N_-]\setminus [N], \qquad X_{\wt N+1}:=X_{\wt N+1}(Y):= (Y+N_-) \cup [N_-],
\end{equation}
and $\COMP{Z}:=[\wh N_-]\setminus Z$ for any set $Z \subset [\wh N_-]$.  Note the notational difference between $\COMP{Z}$ and $Z^c$. The \corAB{former is used} to write the complement of $Z$ when viewed as a subset of $[\wh N_-]$, where for the \corAB{latter $Z$ is viewed as} a subset of $[N]$. 

Lemma \ref{lem:bidiagonal-det-1} allows one to evaluate each term in the summand in  the \abbr{RHS} of \eqref{eq:det-decompose-1}. However, to obtain any meaningful and usable bound we need some further preprocessing. The idea is to express the sums in \eqref{eq:det-decompose-1} over $\{X_i\}_{i=1}^{\wt N+1}$ as an iterated sums, where the inner sum will be over the choices of $\{X_i\}_{i=1}^{\wt N+1}$ such that the product of the determinants of the bidiagonal matrices is constant and the outer sum will be over all possible values of that product of the determinants. This requires a further set of notation. 

We let
\begin{equation}
\label{eq-Lnew}
X_i := \left\{ x_{i,1} < x_{i,2} < \cdots < x_{i,k+N_-} \right\}, \quad i \in [\wt N+1]; \quad \quad \cX_k:=(X_1, X_2, \ldots, X_{\wt N+1}).
\end{equation}
Next for any ${\bm \ell}:=(\ell_1,\ell_2,\ldots,\ell_{\wt N})$ with $0 \le \ell_i \le \wh N_-$ for 
$i \in [\wt N]$, and $k\in[N]$, we define
\begin{multline*}
L_{{\bm \ell},k}:= \{ \cX_k:  \, 1 \le x_{i+1,1} \le x_{i,1} < x_{i+1,2} \le x_{i,2} < \cdots < x_{i+1,k+N_-} \le x_{i,k+N_-} \le \wh N_-,\\
\text{ and }  \, x_{i+1,1}+ \sum_{j=2}^{k+N_-}(x_{i+1,j}-x_{i,j-1})+ (\wh N_--x_{i,k+d_2}) =\ell_i+k+N_-, \text{ for all } i \in [\wt N]\}.
\end{multline*}
Note that Lemma \ref{lem:bidiagonal-det-1} implies that the summand in \eqref{eq:det-decompose-1} is non-zero only when $\cX_k \in L_{{\bm \ell}, k}$ for some ${\bm \ell}$ and in that case
\begin{equation}\label{eq:prod-decomp}
a_{N_+}^N \prod_{i=1}^{\wt N} \det\left((J_{\wh N_-} + \eta_i(z)\Id_{\wh N_-})[\COMP{X}_i; \COMP{X}_{i+1}]\right) = a_{N_+}^N \prod_{i=1}^{\wt N} \lambda_i(z)^{\ell_i} =: {\bm \eta}(z, {\bm \ell}).
\end{equation}
Furthermore, the restriction \eqref{eq:X-Y-constraint} and the fact that the outer sum in \eqref{eq:det-decompose-1} is over $X, Y \subset [N]$ implies that the summand in \eqref{eq:det-decompose-1} vanishes unless $\cX_k \in \gL_{{\bm \ell},k}$, where
\begin{equation}\label{eq:gL-ell-k}
\gL_{{\bm \ell}, k} := \{\cX_k \in L_{{\bm \ell}, k}: x_{1,k+j}=N+j; \ j  \in [N_-] \quad \text{ and } \quad x_{\wt N+1,j}=j; \ j \in [N_-]\}.
\end{equation}
Thus, setting
\begin{equation}\label{eq:mathbb-X}
 \mathbb{X}:=\mathbb{X}(X_1):= X_1 \cap [N] \qquad \text{ and } \qquad \mathbb{Y}:= \mathbb{Y}(X_{\wt N+1}) := (X_{\wt N+1} - N_-) \cap [N],
\end{equation}
and
\begin{equation}\label{eq:hat-Q-ell-k}
Q_{{\bm \ell}, k}:=\sum_{ \cX_k \in \gL_{{\bm \ell},k}}  (-1)^{\sgn(\sigma_{\mathbb{X}}) \sgn(\sigma_{\mathbb{Y}})}\det(E_N^t[\mathbb{X}; \mathbb{Y}]),
\end{equation}
we deduce 
\begin{equation}\label{eq:P-k-decompose-0}
{\det}_k(z)= \updelta_N^{k} a_{N_+}^{-k}  \sum_{{\bm \ell}: \gL_{{\bm \ell}, k} \neq \emptyset}  {\bm \eta}(z, {\bm \ell}) \cdot Q_{{\bm \ell}, k}. 
\end{equation}
Having obtained the above usable decomposition of ${\det}_k(\cdot)$ the strategy would be to identify an index $k(z) \in [N]$ such that ${\det}_{k(z)}(z)$ is the dominant term in the expansion of $\det(T_N({\bm a}_z)+ \updelta_N E_N)$, i.e.~${\det}_k(z) \ll {\det}_{k(z)}(z)$ for $k \ne k(z)$. This strategy will be implemented in two steps. In the first step, using a second moment bound one identifies an upper bound on the negligible terms. This bound needs to be tight upto some small polynomial in $N$ factors in the case of sub exponential decay of $\updelta_N$, while in the case of exponential decay although one has more room, as we will see below for an $O(1)$ many choices of $k$ the correct exponential rate of decay of magnitude of ${\det}_k$ compared to that of the dominant term needs to be identified. 
In the second step one uses the anti-concentration property of the entries of $E_N$, as stated in Assumption \ref{ass:anti-conc}, to obtain a lower bound on the dominant term. 

\vskip5pt
\noindent
{\bf Step I (Upper bound):} 
For ease in writing we will assume that the entries $E_N$ have unit variances. The reader may note that the same proof works under the uniformly bounded variance assumption. As the entries of $E_N$ are independent \corAB{with zero mean it} is straightforward to see that
\[
\E \left[\det(E_N[\mathbb{X}_*; \mathbb{Y}_*]) \cdot \overline{\det(E_N[\mathbb{X}'; \mathbb{Y}'] )}\right] = \left\{\begin{array}{ll}
 k! & \mbox{if } \mathbb{X}_*= \mathbb{X}' \text{ and } \mathbb{Y}_*=\mathbb{Y}',\\
0 & \mbox{ otherwise},
\end{array}
\right.
\]
for any collection of subsets $\mathbb{X}_*, \mathbb{Y}_*, \mathbb{X}', \mathbb{Y}' \subset [N]$, each of cardinality $k$. Hence, we deduce that 
\begin{align}\label{eq:var-bound-multi}
\E[|Q_{{\bm \ell}, k}|^2]=\Var (Q_{{\bm \ell}, k})  & \, = k! \cdot \mathfrak{N}_{{\bm \ell}, k},
\end{align}
where
\begin{equation}\label{eq:N-ell-k}
\mathfrak{N}_{{\bm \ell}, k}:=\left| \left\{\cX_k =(X_1, X_2,\ldots, X_{\wt N+1}), \cX_k'=(X_1', X_2', \ldots, X_{\wt N+1}') \in \gL_{{\bm \ell}, k}: X_1=X_1', X_{\wt N+1}=X_{\wt N+1}' \right\}\right|.
\end{equation}
Using the above notation and applying Cauchy-Schwarz inequality we find
\beq\label{eq:det-k-second-moment}
\E|{\det}_k(z)|^2 \le \left|\f{\updelta_N}{a_{N_+}}\right|^{2k} \left(\sum_{{\bm \ell}: \gL_{{\bm \ell}, k} \ne \emptyset} |{\bm \eta}(z, {\bm \ell})| \cdot (\E |Q_{{\bm \ell}, k}|^2)^{1/2}\right)^2 \le \left|\f{\updelta_N}{a_{N_+}}\right|^{2k} \left(\sum_{{\bm \ell}: \gL_{{\bm \ell}, k} \ne \emptyset} |{\bm \eta}(z, {\bm \ell})| \cdot (k! \gN_{{\bm \ell}, k})^{1/2} \right)^2. 
\eeq
To bound the \abbr{RHS} of \eqref{eq:det-k-second-moment} we again need two different approaches. For $k \ge |d|$, where $d = d(z)={\rm wind}({\bm a}, z)$, we use the following bound on $\gN_{{\bm \ell}, k}$. Recall that for $z \in \C$ the notation $m_+=m_+(z)$ denotes the number of roots of the polynomial $\zeta \mapsto \zeta^{N_-} {\bm a}_z(\zeta)$ that are outside $\ol{\D(0,1)}$ .


\begin{lemma}[{\cite[Lemma 2.5]{BZ}}]\label{lem:combinatorial-1}
Let $z \in \cS_d$ for some $d \in \Z$. Fix $k \in \N$ such that $|d| \le k \le N$, and ${\bm \ell}=(\ell_1,\ell_2,\ldots,\ell_d)$ such that $0 \le \ell_i \le \wh N_-$ for all $i \in [\wt N]$.
Then 
\begin{align}\label{eq:N-l-k-bd}
\mathfrak{N}_{{\bm \ell}, k} & \, \le    \binom{\wh N_-}{k-|d|} \cdot \prod_{i=1}^{m_+} \binom{\hat \ell_i-1}{k+N_- -1}^2 \cdot \prod_{i=m_++1}^{\wt N} \binom{\hat \ell_i+k+N_-}{k+N_-}^2,
\end{align}
where
\begin{equation}\label{eq:hat-ell}
\hat \ell_i := \left\{\begin{array}{ll} \ell_i & \mbox{ if } i > m_+,\\
\wh N_-- \ell_i & \mbox{ if } i \le m_+.
\end{array}
\right.
\end{equation}
\end{lemma}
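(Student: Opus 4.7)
The plan is to decompose $\mathfrak{N}_{{\bm \ell}, k}$ over common boundaries $(X_1, X_{\wt N+1})$ and control the intermediate and boundary counts separately. Since a pair $(\cX_k, \cX_k')$ with $\cX_k, \cX_k' \in \gL_{{\bm \ell}, k}$, $X_1 = X_1'$, and $X_{\wt N+1} = X_{\wt N+1}'$ is determined by the common boundary together with the two intermediate chains $(X_2,\ldots,X_{\wt N})$ and $(X_2',\ldots,X_{\wt N}')$, which are independent once the boundary is fixed, one has
\begin{equation*}
\mathfrak{N}_{{\bm \ell}, k} = \sum_{(X_1, X_{\wt N+1})} \mathsf{n}(X_1, X_{\wt N + 1})^2,
\end{equation*}
where $\mathsf{n}(X_1, X_{\wt N+1})$ is the number of admissible fillings $(X_2, \ldots, X_{\wt N})$ completing the chain in $\gL_{{\bm \ell}, k}$. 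This reduces the task to (i) a boundary-independent upper bound on $\mathsf{n}$, which will supply the squared binomial factors in the statement, and (ii) a count of admissible boundary pairs, which will supply the factor $\binom{\wh N_-}{k-|d|}$.

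For the transition bound, I would first telescope the sum constraint defining $L_{{\bm \ell}, k}$ to obtain $\sum_j x_{i+1, j} - \sum_j x_{i, j} = \ell_i - (N-k)$ for each $i \in [\wt N]$. Introducing the gap variables $u_1 := x_{i+1,1}$ and $u_j := x_{i+1, j} - x_{i, j-1}$ for $j \ge 2$, which satisfy $u_j \ge 1$ together with upper bounds coming from the interlacing in $L_{{\bm \ell}, k}$, the admissible $X_{i+1}$ given $X_i$ correspond to compositions admitting two complementary parametrizations. Ignoring the upper bounds and counting the total downshift $\sum_j (x_{i, j} - x_{i+1, j}) = (N-k) - \ell_i$ as an ordered sum of $k+N_-$ nonnegative integers yields $\binom{\wh N_- - \ell_i - 1}{k + N_- - 1}$; alternatively, counting $\sum_j u_j \le \ell_i + k + N_-$ with each $u_j \ge 1$ yields $\binom{\ell_i + k + N_- - 1}{k + N_- - 1} \le \binom{\ell_i + k + N_-}{k + N_-}$. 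For $i \le m_+$, the weight $\zeta_i(z)^{\ell_i}$ (with $|\zeta_i(z)| > 1$) rewards large $\ell_i$, so $\hat\ell_i = \wh N_- - \ell_i$ is small and the first parametrization gives the sharp estimate $\binom{\hat\ell_i - 1}{k + N_- - 1}$; for $i > m_+$, the dominant configurations have $\ell_i = \hat\ell_i$ small and the second parametrization gives $\binom{\hat\ell_i + k + N_-}{k + N_-}$. Both estimates are boundary-independent, and their product over $i$ bounds $\mathsf{n}(X_1, X_{\wt N+1})$ uniformly.

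Squaring this uniform bound recovers the squared binomial factors in the statement, so it remains to count admissible boundary pairs. Summing the telescoped identity over $i \in [\wt N]$ links the free portions $\mathbb{X} = X_1 \cap [N]$ and $\mathbb{Y} = (X_{\wt N+1} - N_-) \cap [N]$ by the linear equation $\sum_{y \in \mathbb{Y}} y - \sum_{x \in \mathbb{X}} x = \sum_i \ell_i - N_+(N - k)$. In parallel, the fixed top $N_-$ entries of $X_1$ and bottom $N_-$ entries of $X_{\wt N+1}$, propagated via the interlacing, force most of the extremal positions along every admissible chain. Combining this propagation with the winding-number identity $d = N_+ - m_+$ isolates $|d|$ obligatory moves common to every chain, so that the remaining freedom in the boundary pair reduces to a choice of $k - |d|$ positions among $\wh N_-$ sites, giving the factor $\binom{\wh N_-}{k-|d|}$.

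The transition estimates are elementary compositions-with-repetitions bounds, and the dichotomy between the two parametrizations fits naturally with the root-location dichotomy $|\zeta_i(z)| > 1$ versus $|\zeta_i(z)| < 1$. The main obstacle will be the boundary count: the naive bound there is $\binom{N}{k}^2$, and sharpening it to $\binom{\wh N_-}{k-|d|}$ requires careful bookkeeping of how the interlacing propagates the fixed extremal entries of $X_1$ and $X_{\wt N+1}$ through every intermediate $X_i$, and how the winding number $d$ dictates the gross structure of every admissible chain. I expect this step to be the most delicate part of the argument.
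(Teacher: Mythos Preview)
The paper does not prove this lemma; it is quoted from \cite[Lemma 2.5]{BZ}, so there is no in-paper argument to compare against, and the task is really to reproduce the combinatorics of \cite{BZ}.

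Your decomposition $\mathfrak{N}_{{\bm \ell},k}=\sum_{(X_1,X_{\wt N+1})}\mathsf{n}(X_1,X_{\wt N+1})^2$ is correct, and the two composition counts you derive for a single step $X_i\to X_{i+1}$ are exactly the source of the product $\prod_i B_i^2$ of squared binomials in \eqref{eq:N-l-k-bd}. The gap is in how you obtain the remaining factor $\binom{\wh N_-}{k-|d|}$. Your plan is to bound $\mathsf{n}\le M:=\prod_i B_i$ uniformly and then argue that the number of admissible boundary pairs $(X_1,X_{\wt N+1})$ is at most $\binom{\wh N_-}{k-|d|}$, via interlacing-propagation of the forced extremal entries of $X_1$ and $X_{\wt N+1}$. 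That last claim is false. Take $N_-=0$, $N_+=1$ (so $\wt N=1$, $\wh N_-=N$), $d=1$, $k=1$, and any $\ell_1\ge 1$: the constraints in \eqref{eq:gL-ell-k} are vacuous, so $\gL_{{\bm\ell},k}=L_{{\bm\ell},k}$ and there are no forced entries to propagate at all. A pair $(X_1,X_2)=(\{x\},\{y\})$ lies in $\gL_{{\bm\ell},k}$ iff $y\le x$ and $y+(N-x)=\ell_1+1$, which forces $y=x-N+\ell_1+1$ and lets $x$ range over $\{N-\ell_1,\dots,N\}$. Hence there are $\ell_1+1$ admissible boundary pairs, while $\binom{\wh N_-}{k-|d|}=\binom{N}{0}=1$. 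The lemma's bound $\mathfrak{N}_{{\bm\ell},k}\le(\ell_1+1)^2$ certainly holds here (indeed $\mathfrak{N}_{{\bm\ell},k}=\ell_1+1$), but it does \emph{not} factor as $M^2\times(\text{number of boundary pairs})$ with your choice $M=\prod_iB_i$.

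The upshot is that the prefactor $\binom{\wh N_-}{k-|d|}$ cannot be isolated as a pure boundary count; one copy of the transition product has to be spent anchoring the chain. A workable route is to bound $\mathfrak{N}_{{\bm\ell},k}\le(\sup\mathsf n)\cdot|\gL_{{\bm\ell},k}|$ and then control $|\gL_{{\bm\ell},k}|$ by summing over a pivot $X_{m_++1}$, where the interplay of the boundary forcing with the \emph{sum} constraints in $L_{{\bm\ell},k}$ (not the interlacing alone) is what cuts the pivot freedom down to $k-|d|$ effective coordinates; compare how the proof of Lemma~\ref{lem:combin-2} in the paper combines both ingredients. Your linear identity $\sum_{\mathbb Y}-\sum_{\mathbb X}=\sum_i\ell_i-N_+(N-k)$ is one shadow of this, but on its own it is a single scalar relation and cannot deliver the required reduction.
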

 
 To treat the case $k < |d|$ we will show that the summand in the \abbr{RHS} of \eqref{eq:det-k-second-moment}  is either zero or exponentially small compared the dominant term, which in turn will yield that ${\det}_k(z)$, for such a $k$, is exponentially small compared to the dominant term.
\begin{lemma}\label{lem:combin-2}
Let $z \in \cS_d$ for some $d$ such that $|d| \ge 2$. Fix any $k \in \N$ such that $k < |d|$. Fix ${\bm \ell}=(\ell_1, \ell_2, \ldots, \ell_{\wt N})$ with $ 0 \le \ell_i \le \wh N_-$ for $i \in [\wt N]$. 
\begin{enumerate}
\item[(i)] Let $d >0$. Then $\gL_{{\bm \ell}, k} \ne \emptyset$ implies $\sum_{i=m_++1}^{\wt N} \ell_i \ge (d-k) \wh N_- - \wt N^2$. 
\item[(ii)] Let $d <0$. Then  $\gL_{{\bm \ell}, k} \ne \emptyset$ implies $\sum_{i=1}^{m_+} \ell_i \le (m_++k+d) \wh N_- +\wt N^2$.
\end{enumerate}
\end{lemma}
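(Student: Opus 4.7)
The plan rests on a simple telescoping identity implicit in the defining constraint of $L_{\bm \ell, k}$. Writing $S(X) := \sum_{x \in X} x$ and expanding
\[
x_{i+1, 1} + \sum_{j=2}^{k+N_-}(x_{i+1, j} - x_{i, j-1}) + (\wh N_- - x_{i, k+N_-}) = \ell_i + k + N_-,
\]
the left-hand side collapses to $S(X_{i+1}) - S(X_i) + \wh N_-$, yielding
\[
\ell_i = (N - k) + S(X_{i+1}) - S(X_i) \qquad \text{for every } i \in [\wt N].
\]
The interlacing $x_{i+1, j} \le x_{i, j}$ forces $S(X_{i+1}) \le S(X_i)$ and hence the pointwise bound $\ell_i \le N - k$.

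Summing this identity over $i$ produces a second telescoping, giving $\sum_{i=1}^{\wt N} \ell_i = \wt N(N-k) + S(X_{\wt N+1}) - S(X_1)$. The additional boundary conditions encoded in $\gL_{\bm \ell, k}$ fix $X_1 = \mathbb{X} \cup \{N+1, \ldots, \wh N_-\}$ and $X_{\wt N+1} = [N_-] \cup (\mathbb{Y} + N_-)$ for some $\mathbb{X}, \mathbb{Y} \subset [N]$ of cardinality $k$. Computing $S(X_1)$ and $S(X_{\wt N+1})$ explicitly and simplifying reduces the display to
\[
\sum_{i=1}^{\wt N} \ell_i = N_+(N-k) + S(\mathbb{Y}) - S(\mathbb{X}),
\]
with the elementary slack $|S(\mathbb{Y}) - S(\mathbb{X})| \le k(N - k)$ controlling the remainder.

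Part (i) then follows by splitting $\sum_{i > m_+} \ell_i = \sum_i \ell_i - \sum_{i \le m_+} \ell_i$ and applying the pointwise bound $\ell_i \le N - k$ to the second piece, which after using $d = N_+ - m_+$ yields $\sum_{i > m_+} \ell_i \ge (N-k)(d-k)$. Comparing with the target $(d-k)\wh N_- - \wt N^2$ reduces to the inequality $(d-k)(k + N_-) \le \wt N^2$, which follows from $d - k \le N_+ \le \wt N$ and $k + N_- < N_+ + N_- = \wt N$ (the latter using $k < d \le N_+$). Part (ii) is dual: one upper bounds $\sum_{i \le m_+} \ell_i \le \sum_i \ell_i \le (N-k)(N_+ + k)$ and compares with the target $(m_+ + k + d)\wh N_- + \wt N^2 = (N_+ + k)\wh N_- + \wt N^2$ (using $N_+ = m_+ + d$) via the trivial inequality $(N_+ + k)(N_- + k) \ge 0$.

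There is no genuine obstacle; the only care needed is the bookkeeping of lower-order terms in $\wt N$, ensuring that the looseness introduced by the bound $|S(\mathbb{Y}) - S(\mathbb{X})| \le k(N-k)$ and by replacing $N$ with $\wh N_- = N + N_-$ is absorbed into the claimed $\wt N^2$ slack. The hypothesis $k < |d|$ is not strictly needed in the algebra; it serves only to ensure that the asserted inequalities are non-trivial compared with the a priori range $0 \le \ell_i \le \wh N_-$.
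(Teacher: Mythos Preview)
Your proof is correct, and it takes a cleaner route than the paper's. The paper treats the two parts by different devices: for (i) it uses a \emph{diagonal} telescope across the array $\{x_{i,j}\}$ (moving both indices simultaneously), summing the identity
\[
x_{m_++s+1,1} + \sum_{j=1}^{k+N_--1} (x_{m_++s+j+1,j+1} - x_{m_++s+j,j}) + (\wh N_- - x_{m_++s+k+N_-, k+N_-}) = \wh N_-
\]
over $s\in[d-k]$; for (ii) it switches to the complementary quantities $\hat\ell_i=\wh N_--\ell_i$, expresses $\sum_{i\le m_+}\hat\ell_i$ as $\sum_{i,j}(x_{i,j}-x_{i+1,j}+1)$, and then pins down the exact values $x_{m_++1,j}=j$ for $j\le |d|$ by induction on the interlacing constraints. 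Your single identity $\ell_i=(N-k)+S(X_{i+1})-S(X_i)$ subsumes both: summing it gives the exact formula $\sum_i\ell_i=N_+(N-k)+S(\mathbb{Y})-S(\mathbb{X})$, and the interlacing gives the pointwise bound $\ell_i\le N-k$, after which both parts reduce to one-line arithmetic with the slack $|S(\mathbb{Y})-S(\mathbb{X})|\le k(N-k)$. What the paper's argument for (ii) buys is slightly more structural information (the specific values $x_{m_++1,j}=j$), but that extra information is not used elsewhere; your approach is shorter, symmetric between the two cases, and in fact shows that for (ii) the $\wt N^2$ slack is unnecessary.
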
 

The proof of Lemma \ref{lem:combin-2} is a refinement of that of \cite[Lemma 2.9]{BZ}.
\begin{proof}[Proof of Lemma \ref{lem:combin-2}]
Consider first
the case $d = N_+ - m_+>0$. For $k < d$, $s \in [d-k]$, and any $\cX_k \in \gL_{{\bm \ell}, k} \subset L_{{\bm \ell}, k}$ the indices $\{x_{m_++s+i+1,j}, i, j \in [k+N_-]\}$ are well defined.
On the other hand observe that
\begin{multline}\label{eq:d-pos-telescope}
x_{m_++s+1,1} + \sum_{j=1}^{k+N_--1} (x_{m_++s+j+1,j+1} - x_{m_++s+j,j}) + (\wh N_- - x_{m_++s+k+N_-, k+N_-} ) = \wh N_-. 
\end{multline}
Adding both sides of \eqref{eq:d-pos-telescope} for $s=1,2,\ldots, d-k$, we see that the \abbr{RHS} is $(d-k) \wh N_-$, while recalling the definition of $L_{{\bm \ell}, k}$, noting that each of the summand in \eqref{eq:d-pos-telescope} is nonnegative, and rearranging the sum we see that the sum of the \abbr{LHS} is bounded above by $\sum_{i=m_++1}^{\wt N} (\ell_i+k+N_-)$. The desired bound for $d >0$ is now immediate. 

Next we consider the case $d <0$. Recalling \eqref{eq:hat-ell} and using the observation
\[
x_{i+1,1}+  \sum_{j=2}^{k+N_-}(x_{i+1,j}-x_{i,j-1}) +  \sum_{j=1}^{k+N_-}(x_{i,j}-x_{i+1,j}) + (\wh N_-- x_{i,k+N_-}) = \wh N,
 \]
we see that
\beq\label{eq:d-pos-telescope1}
\sum_{i=1}^{m_+} \wh \ell_i = \sum_{i=1}^{m_+} \sum_{j=1}^{k+N_-}(x_{i,j}-x_{i+1,j}+1) \ge \sum_{j=k+1}^{|d|}(x_{1,j}-x_{m_++1,j}+1).
\eeq
Now, for any $\cX_k \subset \gL_{{\bm \ell}, k}$ we have $x_{1,k+j} = N+j$ for $j \in [N_-]$. Further $x_{\wt N+1, \ell} = \ell$ for $\ell \in [N_-]$. Since $x_{i+1,\ell} \le x_{i,\ell} < x_{i+1, \ell+1}$ for any $\ell \in [k+N_- -1]$, and $\{x_{i, \ell}\}$ are integers, using induction, we obtain that $x_{m_++1, \ell}= \ell$ for any $\ell \in [|d|]$. The last two observations together implies that $x_{1,j} - x_{m_++1, j} = N$ for $j=k+1, \ldots, |d|$. Plugging this estimate in the \abbr{RHS} of \eqref{eq:d-pos-telescope1} the desired bound is now immediate. 
\end{proof}

Building on Lemmas \ref{lem:combinatorial-1} and \ref{lem:combin-2} we have the following upper bound. 

\begin{lemma}\label{lem:ubound-detk}
Let ${\bm a}$ be as in \eqref{eq:laurent-poly} and $\updelta_N=r^N$ for some $r \in (0,1)$. Assume that the entries of $E_N$ are centered and have unit variance. 
\begin{enumerate}
\item[(i)] Let $z \in \cS_d$ for some $d \in \Z$. Then, there exists some constant $\bar C(z) < \infty$ so that for any $k \in \N$ such that $|d| \le k \le N$, any $\vep >0$, and all large $N$,
\[
\E|{\det}_k(z)|^2 \le \left( r^{2|d|N} |a_{N_+}|^{2N}\prod_{i=1}^{m_+}|\zeta_i(z)|^{2N}  \right) \cdot \bar C(z) \cdot (r(1+\vep))^{2(k-|d|)N}.
\]
\item[(ii)] \corAB{Recall the definition of $\wh \Gamma({\bm a})$ from Definition \ref{dfn:bad-points}. Let $z \in \cS_d\setminus \wh \Gamma({\bm a})$ for some $d \in \Z$ such that $d \ne 0$. Then,} for any $k \in \N$ such that $k < |d|$ and all large $N$,
\beq\label{eq:detk-small-k}
\E|{\det}_k(z)|^2 \leq \left( r^{2kN} |a_{N_+}|^{2N}\prod_{i=1}^{N_+-k {\bf 1}(d >0) +k {\bf 1}(d <0)}|\zeta_i(z)|^{2N}  \right) \cdot \bar C'(z) \cdot N^{2k+4\wt N}.
\eeq
\end{enumerate}
\end{lemma}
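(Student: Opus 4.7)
The plan is to build both parts on the second moment bound \eqref{eq:det-k-second-moment}, which upon substituting $\updelta_N = r^N$ and $|{\bm \eta}(z,{\bm \ell})| = |a_{N_+}|^N\prod_{i=1}^{\wt N}|\zeta_i(z)|^{\ell_i}$ reduces to estimating
\[
\sum_{{\bm \ell}: \gL_{{\bm \ell}, k} \ne \emptyset} |a_{N_+}|^N \prod_{i=1}^{\wt N}|\zeta_i(z)|^{\ell_i} \sqrt{k!\, \gN_{{\bm \ell},k}}.
\]
The two parts diverge according to whether the full structural bound of Lemma \ref{lem:combinatorial-1} is used (for $k \ge |d|$) or only the range restriction of Lemma \ref{lem:combin-2} (for $k < |d|$).

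For part (i), I would substitute the bound of Lemma \ref{lem:combinatorial-1} inside the square root and change variables via \eqref{eq:hat-ell}. Writing $|\zeta_i|^{\ell_i} = |\zeta_i|^{\wh N_-}\cdot |\zeta_i|^{-\hat \ell_i}$ for $i \le m_+$ pulls out a factor $\prod_{i \le m_+}|\zeta_i(z)|^{\wh N_-}$, and the remaining sum factorizes as a product of univariate sums of the form $\sum_{m \ge 0}\binom{m+r-1}{r-1}x^m = (1-x)^{-r}$, taken with $r = k+N_-$, $x = |\zeta_i|^{-1}$ for $i \le m_+$ and with $r = k + N_-+1$, $x = |\zeta_i|$ for $i > m_+$. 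Convergence is guaranteed by $z \in \cS_d$, which forbids $|\zeta_i(z)| = 1$. Summing gives the closed form $\prod_{i \le m_+}(|\zeta_i|-1)^{-(k+N_-)} \cdot \prod_{i > m_+}(1-|\zeta_i|)^{-(k+N_-+1)}$. Re-combining with the $r^{2kN}/|a_{N_+}|^{2k}$ prefactor and rewriting $r^{2kN} = r^{2|d|N}\cdot r^{2(k-|d|)N}$, the $k = |d|$ contribution of every $z$-dependent quantity is absorbed into $\bar C(z)$, while the sub-exponential-in-$N$ growth per unit of $k - |d|$ of $k!$, of $\binom{\wh N_-}{k-|d|}$, and of the $(k-|d|)$-th powers of the generating function outputs is absorbed into the slack $(1+\vep)^{2(k-|d|)N}$ for any fixed $\vep > 0$ once $N$ is sufficiently large.

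For part (ii), I would invoke Lemma \ref{lem:combin-2} to constrain the range of ${\bm \ell}$. Taking $d > 0$ for concreteness, $\gL_{{\bm \ell},k} \ne \emptyset$ forces $\sum_{i > m_+}\ell_i \ge (d-k)\wh N_- - \wt N^2$. Since $|\zeta_i(z)| < 1$ and nonincreasing in $i$ for $i > m_+$, the maximum of $\prod_{i > m_+}|\zeta_i|^{\ell_i}$ subject to this constraint and $\ell_i \le \wh N_-$ is attained by saturating $\ell_{m_++1}, \ldots, \ell_{m_++d-k-1}$ at $\wh N_-$ and placing the residual mass on $\ell_{m_++d-k}$, yielding
\[
\prod_{i > m_+}|\zeta_i|^{\ell_i} \le \prod_{i=m_++1}^{N_+-k}|\zeta_i(z)|^{\wh N_-} \cdot |\zeta_{m_++d-k}(z)|^{-\wt N^2},
\]
where the last factor is a finite $z$-dependent constant because $z \notin \wh \Gamma({\bm a})$ implies no $\zeta_i(z)$ vanishes. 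The trivial bound $\prod_{i \le m_+}|\zeta_i|^{\ell_i} \le \prod_{i \le m_+}|\zeta_i|^{\wh N_-}$ then telescopes with the above into $\prod_{i=1}^{N_+-k}|\zeta_i|^{\wh N_-}$, which after squaring and absorbing the $|\zeta_i|^{2N_-}$ piece into the constant prefactor recovers $\prod_{i=1}^{N_+-k}|\zeta_i(z)|^{2N}$. The remaining combinatorial factors, namely the total number of ${\bm \ell}$ (at most $(\wh N_- + 1)^{\wt N}$), $k! \le N^k$, and the crude bound $\gN_{{\bm \ell},k} \le |L_{{\bm \ell},k}|^2$ with $|L_{{\bm \ell},k}|$ polynomial in $N$, combine to give the claimed $N^{2k + 4\wt N}$ factor. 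The case $d < 0$ is handled symmetrically: Lemma \ref{lem:combin-2}(ii) restricts $\sum_{i \le m_+}\ell_i \le (N_+ + k)\wh N_- + \wt N^2$, and the dual extremization concentrates the mass on the largest magnitudes $|\zeta_1|, \ldots, |\zeta_{N_++k}|$ to yield $\prod_{i=1}^{N_++k}|\zeta_i|^{2N}$.

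The main technical obstacle I anticipate is the $k$-uniform accounting in part (i): every factor arising from Lemma \ref{lem:combinatorial-1} and from the generating function sums carries a $k$-dependent exponent that must be split into a $k = |d|$ piece (absorbed into $\bar C(z)$) and a remainder that has to be controlled uniformly in $k \in [|d|,N]$ by $(1+\vep)^{2(k-|d|)N}$. Concretely one needs each such base to be dominated by $(1+\vep)^{2N}$, which is easy for any fixed $\vep > 0$ and $N$ large enough, but the estimate must be uniform in $k$ rather than pointwise. A lesser obstacle in part (ii) is the index-alignment step in the constrained optimization: only the specific identity $d = N_+ - m_+$ ensures that the two segments $[1, m_+]$ (from the trivial bound on $i \le m_+$) and $[m_++1, N_+-k]$ (from the constrained extremization over $i > m_+$) abut to form the single range $[1, N_+ - k]$ that appears in the claimed bound.
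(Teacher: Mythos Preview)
Your proposal is correct and follows the same route as the paper. Part (i) is identical in substance: the paper also substitutes Lemma \ref{lem:combinatorial-1} into \eqref{eq:det-k-second-moment}, passes to the variables $\hat\ell_i$ of \eqref{eq:hat-ell}, and sums via the negative-binomial identity $\sum_{s\ge m}\binom{s-1}{m-1}\uplambda^{s-m}=(1-\uplambda)^{-m}$, absorbing all sub-exponential growth in $k-|d|$ into $(1+\vep)^{2(k-|d|)N}$. Part (ii) is likewise the same strategy: the paper bounds $|{\bm\eta}(z,{\bm\ell})|$ via Lemma \ref{lem:combin-2} exactly through the constrained extremization you describe (your index alignment $m_++d-k=N_+-k$ is the key identity), then controls the remaining sum crudely. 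The only difference is that the paper invokes the tighter trivial bound $\gN_{{\bm\ell},k}\le N^{2k}$ rather than your $\gN_{{\bm\ell},k}\le |L_{{\bm\ell},k}|^2$; the latter would produce a larger (though still polynomial in $N$) prefactor than the stated $N^{2k+4\wt N}$, so to land exactly on the lemma as written you should use the sharper count. This has no bearing on the application to Theorem \ref{thm:lsd-exp}, where any polynomial prefactor is dominated by the exponential ratios.
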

Notice the difference in the upper bound in \eqref{eq:detk-small-k} for $d >0$ and $d < 0$. 

\begin{proof}[Proof of Lemma \ref{lem:ubound-detk}]
As $z \notin {\bm a}(\mathbb{S}^1)$ there exists some $\vep_0(z) > 0$ such that 
\[
 |\zeta_{m_+}(z)|^{-1} \wedge |\zeta_{m_++1}(z)| \le 1 -\vep_0(z). 
\]
Recalling \eqref{eq:prod-decomp}, and applying the combinatorial identity
\[
\sum_{s \geq m} \binom{s-1}{m-1} \uplambda^{s-m} = (1-\uplambda)^{-m}, \qquad \text{ for } \uplambda \in (0,1) \text{ and } m \in \N,
\]
together with \eqref{eq:det-k-second-moment} and Lemma \ref{lem:combinatorial-1} we obtain
\begin{align}
&\left(\E|{\det}_k(z)|^2\right)^{1/2}  \cdot \prod_{i=1}^{m_+}|\zeta_i(z)|^{-\wh N_-} \cdot |a_{N_+}|^{-N} \\
 \leq & \, r^{kN}  |a_{N_+}|^{-k} \wh N^{\f{k-|d|}{2}}_- k^{\f{|d|}{2}} \sum_{{\bm \ell}: \gL_{{\bm \ell}, k} \ne \emptyset}  \prod_{j=1}^{m_+} |\zeta_i(z)|^{-\wh \ell_i} \prod_{j=m_++1}^{\wt N} |\zeta_j(z)|^{\wh \ell_i}     \cdot \prod_{i=1}^{m_+} \binom{\hat \ell_i-1}{k+N_- -1} \prod_{i=m_++1}^{\wt N} \binom{\hat \ell_i+k+N_-}{k+N_-}\notag\\
 \leq &  \, r^{kN}   N^{\f{k-|d|}{2}} k^{\f{|d|}{2}} C(z)^k, \notag
\end{align}
for some $C(z) < \infty$. This completes the proof of part (i). 

Turning to prove part (ii) we first consider the case $d >0$. Recalling \eqref{eq:prod-decomp}, as $\ell_i \le \wh N_-$ and $|\zeta_{m_++1}(z)| < 1$, applying Lemma \ref{lem:combin-2}(i) we note that for any ${\bm \ell}$ such that $\gL_{{\bm \ell}, k} \neq \emptyset$, as $z \notin \wh \Gamma({\bm a})$, we have
\beq\label{eq:bm-eta-bd}
|{\bm \eta}(z, {\bm \ell})| \le |a_{N_+}|^N \prod_{i=1}^{m_+} |\zeta_i(z)|^{\wh N_-}  \cdot \prod_{j=m_++1}^{\wt N} |\zeta_j(z)|^{\ell_j} \le  C'(z) \cdot |a_{N_+}|^N \prod_{i=1}^{N_+-k} |\zeta_i(z)|^{\wh N_-},  
\eeq
for some constant $C'(z) < \infty$. The desired bound for $d >0$ now follows upon using the trivial bound $\gN_{{\bm \ell}, k} \le N^{2k}$ and noting that the number of possible choices for $\ell_i$ is at most $\wh N_-+1$ for each $i \in [\wt N]$, and then plugging them in \eqref{eq:det-k-second-moment}. For $d < 0$ applying Lemma \ref{lem:combin-2}(ii) for any ${\bm \ell}$ such that $\gL_{{\bm \ell}, k} \neq \emptyset$, and arguing analogously as in \eqref{eq:bm-eta-bd} we have
\[
|{\bm \eta}(z, {\bm \ell})| \le |a_{N_+}|^N \prod_{i=1}^{m_+} |\zeta_i(z)|^{\ell_i} \le  C'(z) \cdot |a_{N_+}|^N \prod_{i=1}^{N_++k} |\zeta_i(z)|^{\wh N_-}.
\]
Now proceeding similarly as in the proof for the case $d >0$, the proof for $d <0$ also completes. 
\end{proof}

\vskip5pt
\noindent
{\bf Step II (Lower bound):} The lower bound relies on the following anti-concentration bound. 

\begin{lemma}[{\cite[Proposition 4.1]{BZ}}]\label{prop:anti-conc}
Fix $k, n \in \N$ and let $\{U_i\}_{i=1}^n$ be a sequence of
independent (possibly complex valued) random variables
such that
\(
\max_{i=1}^n \gL(U_i, \vep) \le C \vep^{\upeta},
\)
where $C, \vep$, and $\upeta$ are as in Assumption \ref{ass:anti-conc} (recall \eqref{eq:gL}). Let $P_k(U_1,U_2,\ldots,U_n)$ be a 
homogenous polynomial of degree $k$ such that the degree of 
each variable is at most one. That is,
\[
P_k(U_1,U_2,\ldots,U_n) := \sum_{\cI \in \binom{[n]}{k}} a(\cI) \prod_{i \in \cI} U_{i},
\]
for some collection of complex valued coefficients $\{a(\cI); \, \cI \in \binom{[n]}{k}\}$. Assume that there exists an 
$\cI_0$ such that $|a(\cI_0)| \ge c_\star$ for some absolute constant $c_\star>0$. Then for $\vep \in (0, {e^{-1}}]$ sufficiently small and some large $\bar C < \infty$ we have
\beq\label{eq:P-k-anti-conc}
\P\left( |P_k(U_1,U_2,\ldots,U_n)| \le \vep\right) \le \bar C (c_\star \wedge 1)^{-\upeta}\vep^\upeta \left(\log\left(\frac{1}{\vep}\right)\right)^{k-1}.
\eeq
\end{lemma}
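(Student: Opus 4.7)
The plan is to prove the bound by induction on $k$, using a conditioning argument that peels off one variable at each step, combined with a dyadic layered estimate on the magnitude of the resulting lower degree polynomial.

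For the base case $k = 1$, write $P_1 = \sum_i a(i) U_i$ with $|a(i_0)| \ge c_\star$. Conditioning on $\{U_j\}_{j \ne i_0}$ reduces the event $\{|P_1| \le \vep\}$ to an event of the form $\{|a(i_0) U_{i_0} - w| \le \vep\}$ for some deterministic $w$, and the L\'evy concentration hypothesis immediately yields $\P(|P_1| \le \vep) \le C(\vep/c_\star)^\upeta$, which is stronger than the claim (no logarithmic factor is needed at degree one). For the inductive step, fix any $i_0 \in \cI_0$ and write
\[
P_k = U_{i_0} \cdot Q\left(U_{[n]\setminus\{i_0\}}\right) + R\left(U_{[n]\setminus\{i_0\}}\right),
\]
where $Q$ is a homogeneous multilinear polynomial of degree $k-1$ in the remaining variables whose coefficient indexed by $\cI_0 \setminus \{i_0\}$ equals $a(\cI_0)$, so $Q$ satisfies the hypothesis of the lemma (at one lower degree) with the same constant $c_\star$. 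Conditioning on $U_{[n]\setminus\{i_0\}}$ and applying the L\'evy concentration hypothesis to $U_{i_0}$ alone gives
\[
\P\left(|P_k| \le \vep \mid U_{[n]\setminus\{i_0\}}\right) \le \min\left(1, C (\vep/|Q|)^\upeta\right).
\]

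To turn this into the desired bound, set $J := \lceil \log_2(1/\vep) \rceil$ and decompose dyadically:
\[
\P(|P_k| \le \vep) \le \P\left(|Q| \le 2^{-J}\right) + \sum_{j=1}^{J} C (\vep \cdot 2^{j})^\upeta \cdot \P\left(|Q| \le 2^{-(j-1)}\right),
\]
where the first few terms (for which $2^{-(j-1)}$ is not yet small) are handled by the trivial bound $\P(\cdot) \le 1$, contributing only $O(\vep^\upeta)$. For the remaining terms, the inductive hypothesis on $Q$ gives $\P(|Q| \le 2^{-(j-1)}) \le \bar C (c_\star \wedge 1)^{-\upeta} 2^{-(j-1)\upeta} ((j-1)\log 2)^{k-2}$, so the geometric factors $2^{j\upeta}$ and $2^{-(j-1)\upeta}$ cancel up to a harmless constant. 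Summing $(j-1)^{k-2}$ over $j = 1, \ldots, J$ produces a contribution of order $J^{k-1} \asymp (\log(1/\vep))^{k-1}$, yielding the claimed exponent. The leftover term $\P(|Q| \le 2^{-J})$ is of order $\vep^\upeta (\log(1/\vep))^{k-2}$ by induction, which is subdominant and absorbed.

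The main obstacle is preventing the constant $\bar C$ from blowing up as the induction proceeds: at each step it picks up multiplicative factors coming from the constant $C$ in the L\'evy hypothesis, from the geometric-series cancellation factor $2^\upeta$, and from summing the polynomial $(j-1)^{k-2}$. For fixed $k$, it is straightforward to choose $\bar C$ large at the outset so that it dominates the accumulated constants at every dyadic level. A secondary technicality is ensuring $2^{-(j-1)}$ is small enough for the inductive bound to be non-vacuous; this is handled by treating the first few terms separately with the trivial bound, as indicated above.
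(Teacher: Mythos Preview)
Your proof is correct and follows essentially the same approach the paper sketches: write $P_k = U_{i_0}Q + R$ (the paper's ``$AU+V$'' with $\{A,V\}$ independent of $U$), condition on the remaining variables, and propagate the L\'evy concentration bound by induction on the degree, with the dyadic layering over $|Q|$ producing one additional $\log(1/\vep)$ factor at each step. Aside from the minor omission of the $|Q|>1$ slice (which contributes another harmless $O(\vep^\upeta)$), your argument matches what the paper cites from \cite{BZ}.
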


At a high level, the proof of Lemma \ref{prop:anti-conc} relies on the observation that if $U$ has a good anti-concentration property, characterized by an upper bound on its L\'evy concentration function, then one can derive anti-concentration property of the random variable $AU+V$, where $\{A,V\}$ is jointly independent of $U$, on the event that $A$ is bounded below. Since the degree of each variable is at most one in \corAB{$P_k(\cdot)$ and at least one coefficient is bounded below the} idea can propagated to obtain the bound \eqref{eq:P-k-anti-conc}. 

\corAB{Building on Lemmas \ref{lem:widom} and \ref{prop:anti-conc} we derive the following lower bound on ${\det}_k$ for any $k \in \N$ with $k < |d|$.}  

\begin{lemma}\label{lem:lower-bd-anti-conc}
Let $d \in \Z$ and $z\in \cS_d\setminus \wh \Gamma({\bm a})$. Then for any $\delta >0$ and $k \in \N$ so that $k \le d$ we have
\[
\P\left(|{\det}_k(z)| \le N^{-\delta} \cdot r^{kN} |a_{N_+}|^{N}\prod_{i=1}^{N_+-k {\bf 1}(d >0) +k {\bf 1}(d <0)}|\zeta_i(z)|^{N}  \right) \ll 1.
\]
\end{lemma}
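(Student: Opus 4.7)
The strategy is to expand $\det_k(z)$ as a multilinear degree-$k$ polynomial in the entries of $E_N$, pinpoint a single monomial whose coefficient has magnitude matching the target
\[
R_N \;:=\; r^{kN}\,|a_{N_+}|^{N}\,\prod_{i=1}^{N_+-k\mathbf 1(d>0)+k\mathbf 1(d<0)}|\zeta_i(z)|^{N},
\]
and then invoke the anti-concentration Lemma \ref{prop:anti-conc}. Fully expanding $\det(E_N^t[X;Y])$ over permutations in \eqref{eq:det-k}, each square-free monomial $\prod_{(y,x)\in S}E_N(y,x)$, where $S$ encodes a bipartite matching between size-$k$ subsets $X,Y\subset[N]$, arises from a unique triple $(X,Y,\sigma)$ and thus appears in $\det_k(z)$ with coefficient $\pm\,\updelta_N^k\,\det(T_N({\bm a}_z)^t[X^c;Y^c])$. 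The task therefore reduces to exhibiting $(X_0,Y_0)$ for which the corresponding minor has magnitude $\asymp |a_{N_+}|^{N}\prod_{i=1}^{N_+-k\mathbf 1(d>0)+k\mathbf 1(d<0)}|\zeta_i(z)|^{N}$.

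For $d>0$ I take $X_0:=[N-k+1,N]$ and $Y_0:=[k]$. Since $(T_N({\bm a}_z)^t)_{ij}=a_{j-i}-z\,\delta_{j,i}$, a direct reindexing identifies $T_N({\bm a}_z)^t[X_0^c;Y_0^c]$ with the transpose of the $(N-k)$-dimensional Toeplitz matrix $T_{N-k}(\bar b)$, where $\bar b(\zeta):=\zeta^{-k}{\bm a}_z(\zeta)$. As a Laurent polynomial, $\bar b$ has $\bar N_+=N_+-k$, $\bar N_-=N_-+k$, leading coefficient $a_{N_+}$, and the associated polynomial $\zeta^{\bar N_-}\bar b(\zeta)=\zeta^{N_-}{\bm a}_z(\zeta)$ has the same roots $\{\zeta_i(z)\}_{i=1}^{\wt N}$ in the same modulus ordering; its winding equals $d-k\ge 0$. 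Since $z\notin\wh\Gamma({\bm a})$, the Widom formula (applied to $T_{N-k}(\bar b)$ as in the proof of Lemma \ref{lem:widom}) gives
\[
\bigl|\det T_N({\bm a}_z)^t[X_0^c;Y_0^c]\bigr|\asymp |a_{N_+}|^{N-k}\prod_{i=1}^{N_+-k}|\zeta_i(z)|^{N-k}\asymp |a_{N_+}|^{N}\prod_{i=1}^{N_+-k}|\zeta_i(z)|^{N},
\]
the last equivalence up to an $N$-independent factor depending on $z$ and $k$. The symmetric choice $X_0:=[k]$, $Y_0:=[N-k+1,N]$ handles $d<0$ with shifted symbol $\tilde b(\zeta):=\zeta^k{\bm a}_z(\zeta)$ (the constraint $k\le|d|\le N_-$ ensures $\tilde N_-=N_--k\ge 0$), and Widom yields magnitude $\asymp|a_{N_+}|^N\prod_{i=1}^{N_++k}|\zeta_i(z)|^N$.

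Picking the distinguished monomial to be the one induced by $(X_0,Y_0)$ and the identity matching, its coefficient $c_\star:=\updelta_N^k\,|\det T_N({\bm a}_z)^t[X_0^c;Y_0^c]|$ satisfies $c_\star\ge c(z,k)\cdot R_N$ for some $c(z,k)>0$ independent of $N$. Setting $\cP_k:=\det_k(z)/c_\star$ produces a homogeneous degree-$k$ polynomial in the $N^2$ jointly independent entries of $E_N$, multilinear (each variable of degree at most one), with one coefficient of modulus $\ge 1$. Under Assumption \ref{ass:anti-conc} the L\'evy concentration hypothesis of Lemma \ref{prop:anti-conc} is satisfied, and applying it with $\vep:=N^{-\delta}/c(z,k)\in(0,e^{-1}]$ (for all large $N$) yields
\[
\P\bigl(|\det_k(z)|\le N^{-\delta}R_N\bigr)=\P(|\cP_k|\le\vep)\lesssim \vep^\upeta\bigl(\log(1/\vep)\bigr)^{k-1}\lesssim N^{-\delta\upeta}(\log N)^{k-1}\ll 1,
\]
since $k\le\wt N$ is bounded independently of $N$ and $\delta\upeta>0$.

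The main technical hurdle is the second paragraph: recognising the specific rectangular minor as a genuine Toeplitz matrix under a shift of the symbol that leaves the roots invariant, and ensuring Widom's formula delivers exactly the target scaling $|\zeta_i|^N$ (rather than a weaker $|\zeta_i|^{N-k}$). The hypothesis $z\notin\wh\Gamma({\bm a})$ is essential on two counts: it precludes double roots (so that the Widom coefficient $C_{[N_+-k]}$ is well defined and nonzero) and it rules out unimodular ratios $|\zeta_j/\zeta_k|=1$ (so that the dominant subset in Widom's sum strictly dominates all others, which is what turns $\asymp$ into the desired sharp lower bound).
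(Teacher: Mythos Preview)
Your proof is correct and follows essentially the same route as the paper: pick the specific subsets $X_0=\{N-k+1,\ldots,N\}$, $Y_0=[k]$ for $d>0$ (swapped for $d<0$), recognise the resulting minor as the Toeplitz matrix $T_{N-k}(\zeta^{-k}{\bm a}_z)$, apply Widom's formula via Lemma~\ref{lem:widom} to lower-bound it, and conclude with the anti-concentration Lemma~\ref{prop:anti-conc}. Your added detail on why $\det_k(z)$ is multilinear in the entries of $E_N$ and why each monomial determines $(X,Y,\sigma)$ uniquely is a useful justification for invoking Lemma~\ref{prop:anti-conc}, but the overall argument is the same as the paper's.
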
\label{lem:lbound-detk}

\begin{proof}
\corAB{The case $d=0$ is immediate from Lemma \ref{lem:widom}.} Consider the case $d >0$. Set $\X_k:=[N]\setminus [k]$ and $\Y_k:=[k]$. Let ${\bm a}_{k,z}(\zeta):=\zeta^{-k} {\bm a}_z(\zeta)$. Then $T_N({\bm a}_z)[\Y^c_k; \X^c_k]= T_{N-k}({\bm a}_{k,z})$. As $z \notin \wh \Gamma({\bm a})$, applying Lemma \ref{lem:widom} for the symbol ${\bm a}_{k,z}(\zeta)$ we get
\[
|\det(T_N({\bm a}_z)^t[\X^c_k;\Y^c_k])| \gtrsim  |a_{N_+}|^{N-k}\prod_{i=1}^{N_+-k}|\zeta_i(z)|^{N-k}.
\]
The desired bound now follows upon recalling \eqref{eq:det-k} and applying Lemma \ref{prop:anti-conc}. 

For $d<0$ setting $\Y_k:=[N]\setminus [k]$ and $\X_k:=[k]$, and repeating the same argument as above the proof completes.
\end{proof}

Combining Lemmas \ref{lem:ubound-detk} and \ref{lem:lbound-detk} we now complete the proof of Theorem \ref{thm:lsd-exp}. 

\begin{proof}[Proof of Theorem \ref{thm:lsd-exp}]
As the entries of $E_N$ have uniformly bounded second moments, by \eqref{eq:T-op-norm} and Weyl's inequality, it follows that \eqref{eq:bdd-in-prob} holds for $\mu_N= L_{T_N({\bm a})+r^N E_N}$. Hence, by Lemma \ref{lem:cL-converges}(ii) it suffices to show that $\cL_{L_{T_N({\bm a})+r^N E_N}}(z) \to \cL_{{\bm a}, r}(z)$, in probability, as $N \to \infty$, for Lebesgue a.e.~$z \in \C$. 

To this end, fix any $z \in \cS_0\setminus \wh \Gamma({\bm a})$. Fix $\vep >0$ such that $r_\vep:=r(1+\vep) < 1$. By Lemma \ref{lem:ubound-detk}(i) it follows that 
\beq
\P\left(|{\det}_k(z)| \ge  r_\vep^{kN/3}\cdot \exp(N \cL_{{\bm a}, r}(z)) \right) \leq r_\vep^{kN}, \quad \text{ for } k \in [N] \text{ and all large } N. 
\eeq
Therefore, by a union bound $\P(\Omega_N^c) \le r_\vep^{N/2}$, for all large $N$, where
\[
\Omega_N:= \left\{{\sum_{k=1}^N|{\det}_k(z)|}\le r_\vep^{N/6} \cdot \exp(N \cL_{{\bm a}, r}(z))\right\}.
\]
On the other hand, by Lemma \ref{lem:widom}, $|{\det}_0(z)| \gtrsim \exp(N\cL_{{\bm a}, r}(z))$. Therefore, as $\P(\Omega_N^c) \ll 1$, using both the lower and upper bounds from Lemma \ref{lem:widom} we deduce that 
\beq\label{eq:conv-in-prob-1}
 \cL_{L_{T_N({\bm a})+r^N E_N}}(z)\to \cL_{{\bm a}, r}(z), \quad \text{ as } N \to \infty, \quad \text{ in probability}, \quad \text{ for all } z \in \cS_0\setminus \wh \Gamma({\bm a}). 
\eeq
Next assume that $z \in \cS_{d,\ell}^r\setminus \wh \Gamma({\bm a})$ for some $d >0$ and $\ell \in \{0,1,2,\ldots, m_-\}$. Fix any $\delta >0$. Define
\[
\ol \Omega_N:=\left\{|{\det}_{(d-\ell)_+}(z)| \geq  \exp(N \cL_{{\bm a}, r}(z)) \cdot N^{-\delta} \right\}, \quad \Omega_N':= \left\{|{\det}_{(d-\ell)_+}(z)| \leq  \exp(N \cL_{{\bm a}, r}(z)) \cdot N^{(d-\ell)_++3\wt N} \right\},
\]
and
\[
\wt \Omega_N:=\left\{ \sum_{k \in[N]\cup\{0\}\setminus \{ (d-\ell)_+\}} |{\det}_k(z)| \le (1-\wt \vep(z))^N \cdot \exp(N \cL_{{\bm a}, r}(z)) \right\},
\]
for some $\wt \vep(z) >0$ to be determined below. For $\ell \ge d$, using Lemma \ref{lem:widom} again we find $\ol{\Omega}_N$ holds for all large $N$. For $\ell <d$, by Lemma \ref{lem:lower-bd-anti-conc}, it follows that $\P(\ol\Omega_N^c) \ll 1$.  Further, by Lemma \ref{lem:ubound-detk}(ii), for $\ell < d$ and Lemma \ref{lem:widom}, for $\ell \ge d$, we have $\P(\Omega_N'^c) \ll 1$. 

We now turn to bound $\P(\wt \Omega_N^c)$. As $z \in \cS_{d,\ell}^r$, from the definition of $\cS_{d,\ell}^r$ it follows that there is some $\vep_0(z) >0$ such that
\beq\label{eq:ratio-exp}
\prod_{j=1}^{m_++d \wedge \ell} |\zeta_j(z)|^N \cdot r^{N(d-\ell)_+} \ge (1+2\vep_0(z))^N \cdot \max_{k \in \{0,1,2,\ldots, d\}\setminus \{(d-\ell)_+\}} \prod_{j=1}^{N_+-k} |\zeta_j(z)|^N \cdot r^{kN}.
\eeq
Therefore, by Lemma \ref{lem:ubound-detk}(ii), for any $k \in [d-1]\setminus\{(d-\ell)_+\}$, 
\beq\label{eq:prob-bd1}
\P\left(|{\det}_k(z)| \geq (1-\vep_0(z))^{N/3} \cdot \exp(N\cL_{{\bm a},r}(z))\right) \le (1-\vep_0(z))^{N}, \qquad \text{ for all large } N. 
\eeq
We now need to consider two cases $\ell =0$ and $\ell \ne 0$ separately. Focusing on the case $\ell=0$ first, by Lemma \ref{lem:ubound-detk}(i), for any $k \in \N$ such that $d < k \le N$ we find
\beq\label{eq:prob-bd2}
\P\left(|{\det}_k(z)| \geq r_\vep^{(k-d)N/3} \cdot \exp(N\cL_{{\bm a},r}(z))\right) \le r_\vep^{(k-d)N}, \qquad \text{ for all large } N. 
\eeq
Next consider the case $\ell \ne 0$. In this case, using \eqref{eq:ratio-exp} and applying Lemma \ref{lem:ubound-detk}(i) again, we find that for any $k \in \N$ such that $d \le k \le N$
\beq\label{eq:prob-bd3}
\P\left(|{\det}_k(z)| \geq (1-\vep_0(z))^{N/3} r_\vep^{(k-d)N/3} \cdot \exp(N\cL_{{\bm a},r}(z))\right) \le r_\vep^{(k-d)N} (1-\vep_0(z))^N, \qquad \text{ for all large } N. 
\eeq
Now combining \eqref{eq:prob-bd1}-\eqref{eq:prob-bd3} (and the bound \eqref{eq:ratio-exp} when $\ell < d$) it follows that there indeed exists some $\wt \vep(z) >0$ such that $\P(\wt \Omega_N^c) \ll 1$. 

As $\P(\ol \Omega_N \cap \wt \Omega_N\cap \Omega_N') = 1-o(1)$, it is immediate to note that for any $d >0$ and any $\ell=0,1,2,\ldots, m_-$,
\beq\label{eq:conv-in-prob-2}
 \cL_{L_{T_N({\bm a})+r^N E_N}}(z)\to \cL_{{\bm a}, r}(z), \quad \text{ as } N \to \infty, \quad \text{ in probability}, \quad \text{ for all } z \in \cS_{d,\ell}^r\setminus \wh \Gamma({\bm a}). 
\eeq
Arguing similarly as above it also follows that \eqref{eq:conv-in-prob-2} continues to hold for any $d <0$ and $\ell=0,1,2,\ldots, m_+$ \corAB{(details omitted)}. This observation together with \eqref{eq:conv-in-prob-1} now completes the proof. 
\end{proof}

We end the section with the proof of Theorem \ref{thm:lsd-cont}. 
\begin{proof}[Proof of Theorem \ref{thm:lsd-cont}]
We only prove part (ii). The proof of part (i) being identical is omitted. 

The Avram-Parter theorem (see \cite[Theorem 5.17]{BS99}) together with the observation \eqref{eq:T-op-norm} imply that the empirical measure of the singular values of $T_N({\bm a}_z)$ converges weakly $\nu^{{\bm a}, z}$, the law of $|{\bm a}({\bm u}) -z|$, as $N \to \infty$, for all $z \in \C$. 
As the entries of $E_N$ have $\log$-concave densities it follows that they have finite moments, and so $E_N$ satisfies Assumption \ref{ass:hs}. Since $\updelta_N \ll N^{-1/2}$, the Hoffman-Wielandt inequality (see \cite[Lemma 2.1.19]{AGZ}) yields that $\nu_N^{{\bm a}, z}$, the empirical measure of the singular values of $T_N({\bm a}_z)+\updelta_N E_N$ converges weakly, in probability, to $\nu^{{\bm a}, z}$, as $N \to \infty$, for all $z \in \C$. This, in particular, implies that for any $z \in \C$, $\vep >0$, and $R< \infty$,
\beq\label{eq:log-conv-1}
\int_\vep^R \log(x) d\nu_N^{{\bm a}, z}(dx) \to \E\left[\log|{\bm a}({\bm u})-z| \cdot{\bf 1}(|{\bm a}(u)-z| \in [\vep, R])\right], \quad \text{ as } N \to \infty, \quad \text{ in probability}.  
\eeq
Moreover, by \eqref{eq:T-op-norm}, for any $z \in \C$,
\beq\label{eq:bdd-sec-mom}
\limsup_{N \to \infty} N^{-1}\E\left[\|T_N({\bm a}_z)+\updelta_N E_N\|^2_{{\rm HS}}\right] \lesssim \corAB{\|{\bm a}_z\|_{\infty, \mathbb{S}^1}^2} +   \limsup_{N \to \infty} \updelta_N^2 N^{-1}\E\left[\|E_N\|^2_{{\rm HS}}\right] < \infty. 
\eeq
This further implies that given any $\delta_0>0$ and $z \in \C$ there exists some $R(z,\delta_0) < \infty$ such that 
\beq\label{eq:log-conv-2}
\lim_{N \to \infty} \P\left(\left|\int_{R(z,\delta_0)}^\infty \log(x) d\nu_N^{{\bm a}, z}(dx) - \E\left[\log|{\bm a}({\bm u})-z| \cdot {\bf 1}\left(|{\bm a}({\bm u})-z| \geq R(z,\delta_0)\right)\right]\right| \ge \delta_0 \right)=0.
\eeq
Now fix any $z \notin {\bm a}(\mathbb{S}^1)$. As ${\bm a}(\mathbb{S}^1)$ is a closed set $\vep_1(z):= {\rm dist}(z, {\bm a}(\mathbb{S}^1)) >0$. Thus, for any $\vep \in (0,\vep_1(z))$,
\beq\label{eq:log-conv-3}
\E\left[\log|{\bm a}({\bm u})-z| \cdot{\bf 1}(|{\bm a}(u)-z| < \vep)\right] =0. 
\eeq  
Further let $k=k(z):={\rm wind}({\bm a}, z) \in \Z$. By \cite[Theorem 1.17]{BS99} and \cite[Proposition 4.7]{BS99} we have $\liminf_{N \to \infty} s_{|k|+1}(T_N({\bm a}_z)) = \vep_2(z) >0$. Hence, applying Weyl's inequality for singular values,
\[
\nu_N^{{\bm a}, z}((0, \vep_2(z)/2)) \le |k|/N, \qquad \text{ for all large } N, 
\]
on the event $\Omega_N:=\{\|E_N\| \le N^{1/4}\updelta_N^{-1/2} \}$. Using that entries of $E_N$ finite moments of all orders, by \cite{L04} we find that $\P(\Omega_N^c) \ll 1$. On the other hand, by \cite[Corollary 1.4]{T20} we further have $\P(\wt \Omega_N^c) \ll 1$, where
\[
\wt \Omega_N:= \left\{s_{\min}(T_N({\bm a}_z) + \updelta_N E_N) \ge \updelta_N/N\right\}.
\]
To complete the proof, we note that on the event $\Omega_N \cap \wt \Omega_N$, setting $\vep_\star(z):=  \min\{\vep_1(z), \vep_2(z)/2\}$ we have
\beq\label{eq:log-conv-4}
\left| \int_0^{\vep_\star(z)} \log (x) d\nu_N^{{\bm a}, z}(dx) \right| \le \corAB{(|k|/N) \cdot \log(N/\updelta_N)} \ll 1. 
\eeq
Combining \eqref{eq:log-conv-1} and \eqref{eq:log-conv-2}-\eqref{eq:log-conv-4}, we find that $\cL_{L_{T_N({\bm a})+\updelta_N E_N}}(z) \to \cL_{{\bm a}({\bm u})}(z)$, as $N \to \infty$, in probability, for any $z \in \C\setminus {\bm a}(\mathbb{S}^1)$. Further, by \eqref{eq:bdd-sec-mom} the assumption \eqref{eq:bdd-in-prob} holds for $\mu_N=L_{T_N({\bm a})+\updelta_N E_N}$. Consequently, recalling that ${\bm a}(\mathbb{S}^1)$ has a zero Lebesgue measure and applying Lemma \ref{lem:cL-converges}(ii) the proof completes.   
\end{proof}


\subsection{Proofs of properties of outliers} Proofs of Theorems \ref{thm:no-outlier} and \ref{thm:outlier} use the idea of determinant expansion. For example, in the case of microscopic perturbation, following the same approach described above it was shown that $|{\det}_0(z)| \gg |\sum_{k \in [N]} {\det}_k(z)|$ uniformly for all $z \notin \cS_0^c +\D(0,\vep)$ and any $\vep>0$. This observation together \corAB{with a uniform} lower bound on $|{\det}_0(z)|$ combined with Rouch\'e's theorem yield Theorem \ref{thm:no-outlier}(i). To identify the limiting distribution of outliers in $\cS_d$ the role of ${\det}_0(\cdot)$ is now taken up \corAB{by ${\det}_{|d|}(\cdot)$ with the lower bound obtained by applying Lemma \ref{prop:anti-conc}}. 

The proofs in the case of macroscopic perturbation follow a similar strategy. However, as the perturbation is macroscopic one no longer expects ${\det}_k(\cdot)$ to be dominant for a single $k=O(1)$. To tackle this issue, \cite{BCC24} uses Sylvester identity to derive that any outlier eigenvalue of $T_N({\bm a})+\upsigma N^{-1/2}E_N$ must be a root of the polynomial   $z \mapsto \det({\rm Id}_{\wt N} + Q_N R_N'(z) P_N + F_N(z))$, where $F_N(z):= Q_N(R_N(z) - R_N'(z) )P_N$, $R_N'(z):=(z - C_N)^{-1}$, $R_N(z):= (z - C_N - \upsigma N^{-1/2} E_N)^{-1}$, and $C_N=C_N({\bm a})$ is an  appropriately chosen circulant matrix so that $T_N({\bm a}) = C_N({\bm a}) - P_N Q_N$ for some matrices $P_N$ and $Q_N$ of dimensions $N \times \wt N$ and $\wt N \times N$, respectively. With this reduction, they now apply \eqref{eq:det_decomposition} with $A_N={\rm Id}_{\wt N} + Q_N R_N'(z) P_N$ and $B_N=F_N(z)$, and proceeds by identifying the dominant term in the expansion.

\section{Proof for the localization of eigenvectors}
The proof for the localization property of the eigenvectors corresponding to bulk eigenvalues of randomly perturbed finitely banded Toeplitz matrices uses the Grushin problem described in Section \ref{sec:grushin} and the idea of determinant expansion introduced in Section \ref{sec:det-expansion}. 

We will apply the Grushin problem for $\delta=N^{-\gamma}$, $Q=E_N$, and $A=T_N({\bm a})-z {\rm Id}_N$, for $z \in \C$. To avoid confusions, it will be imperative to make the dependence in $z$ explicit. So, we will write $A^\delta_z$ instead of $A^\delta$ (see \eqref{gpp1}) and $E_\pm(z)$ instead of $E_\pm$ (see \eqref{gp8}) etc. The starting point is the following couple of observations, which are straightforward consequences of the definition of $\cE^\delta$ (see \eqref{eq:cE-delta}): 
\begin{equation}\label{eq:algeb-id}
E^\delta(z)A^\delta_z+E_+^\delta(z)R_+(z)={\rm Id}_N \quad \text{ and } \quad E_-^\delta(z) A_z^\delta + E_{-+}^\delta(z) R_+(z) =0.
\end{equation}
It would have been ideal to use \eqref{eq:algeb-id} for $z$ an actual eigenvalue of the randomly perturbed Toeplitz matrix under consideration. As the eigenvalues are complicated functions of the entries of a matrix, using \eqref{eq:algeb-id} for an actual eigenvalue $z$ will prevent us using the joint independence structure of the entries of $E_N$. This obstacle is tackled by working on a net of the region in the complex plane where most of the eigenvalues of the perturbed matrix reside with a high probability.  However, this means that one needs various probability bounds to be strong enough so that the desired estimates hold simultaneously for {\em all} net points, which in turn necessitates Assumptions \ref{ass:anti-conc} and \ref{ass:mom}. 

For $\theta >0$, to be determined below, let $\cN_\theta$ be a net of $\Omega(\vep, C, N)$ (recall \eqref{eq:Omega-ep-C}) with mesh size $N^{-\theta}$. Given any eigenvalue $\lambda \in \Omega(\vep, C, N)$ of $T_N({\bm a})+N^{-\gamma} E_N$ we let  $z_\lambda \in \cN_\theta$ be such that $|z_\lambda - \lambda| \le N^{-\theta}$ and $v_\lambda$ be a right (normalized) eigenvector corresponding to $\lambda$. For any $M \in \N$, by \eqref{eq:algeb-id}, \eqref{gp5}, \eqref{gp8}, and \eqref{eq-march1b}, as 
 \beq\label{eq:simple1}
A_\lambda^\delta v_\lambda =0 \qquad \text{ and } \qquad A_{z}^\delta -A_\lambda^\delta = (\lambda - z){\rm Id}_N, 
\eeq
for any $z \in \C$, we see that 
\begin{eqnarray}
\sum\nolimits_{i=M+1}^N (e_i(z_\lambda)^* v_\lambda) \cdot e_i(z_\lambda)&= &  ({\rm Id}_N-E_+(z_\lambda)R_+(z_\lambda)) v_\lambda \nonumber\\
& =&({\rm Id}_N-E_+^\delta( z_\lambda)R_+(z_\lambda)) v_\lambda - E(z_\lambda)({\rm Id}_N+\delta QE(z_\lambda))^{-1} \delta QE_+(z_\lambda) R_+(z_\lambda)v_\lambda \nonumber\\
&=&E^\delta(z_\lambda)(\lambda- z_\lambda) v_\lambda - E(z_\lambda) ({\rm Id}_N+\delta QE(z_\lambda))^{-1} \delta QE_+(z_\lambda) R_+(z_\lambda)v_\lambda. \label{eq:ef-to-pm-1}
\end{eqnarray}
If the \abbr{RHS} of \eqref{eq:ef-to-pm-1} is $o(1)$ then, as $\{e_i(z_\lambda)\}_{i \in [N]}$ forms an orthonormal basis of $\C^N$, one obtains that $\|v_\lambda - w_{z_\lambda}(v_\lambda)\|=o(1)$, where
\[
w_z(v):= \sum_{i=1}^M (e_i(z)^* v) \cdot e_i(z), \qquad z \in \C \text{ and } v \in \C^N.
\]
Hence, the localization properties of the eigenvector $v_\lambda$ will be determined by those of $w_{z_\lambda}(v_\lambda)$. 
To employ this \corAB{strategy one needs the} following intermediate estimates:
\begin{enumerate}
\item[(a)] With probability $1-o(1)$ all but an arbitrary small fraction of the eigenvalues of $T_N({\bm a})+N^{-\gamma} E_N$ fall inside $\Omega(\vep, C, N)$, for all large $N$, and some $0 < \vep, C < \infty$.
\item[(b)] There exist $\alpha_1, \alpha_2, \beta \in (0,\infty)$, so that for any $k \in \N$ with probability $1-o(1)$, simultaneously for all $z \in \cN_\theta$
\beq\label{eq:norm-bds}
\|E^\delta(z)\| \lesssim N^{\alpha_1}, \qquad \|({\rm Id}_N+\delta Q E(z))^{-1}\| \lesssim N^{\alpha_2}, \text{ and } \delta^k\|(E(z)Q)^k E_+(z)\| \lesssim N^{-\beta k}.
\eeq
\item[(c)] For any $z \in \cN_\theta \cap \cS_d$, the singular vectors $\{e_i(z)\}_{i\in [d]}$ satisfy the localization bound of Theorem \ref{thm:eigenvec}(ii).   
\end{enumerate} 

Notice \corAB{from \eqref{eq:ef-to-pm-1}-\eqref{eq:norm-bds} that the} parameter $\alpha_1$ determines $\theta$, while the other two bounds in \eqref{eq:norm-bds} together \corAB{with a resolvent} expansion yield the desired bound on the right most term in \eqref{eq:ef-to-pm-1}. 
Next, observe that Step (a) yields part (i) of Theorem \ref{thm:eigenvec}, while that combined with Steps (b)-(c) yield part (ii) of that theorem. Before proceeding further, let us remark that the general strategy employed in \cite{BVZ} is applicable for a broader class of matrices beyond Toeplitz matrices. However, while executing each of the individual steps the Toeplitz structure has been crucially used in \cite{BVZ}.

Carrying out Steps (a)-(c) require {\em significant} work! Let us briefly illustrate the main ideas behind these three steps. 
Step (a) is further decomposed into two sub steps. To show that the distance of {\em all} eigenvalues in the good region is at least of order $\log N/N$ from the symbol curve, \cite{BVZ} relies on the determinant expansion idea. However, it now requires a more careful analysis as one needs to consider $z$'s that are of vanishing distance from the curve. To show that the distance of most eigenvalues is at most of order $\log N/N$ from the curve \cite{BVZ} uses the determinant expansion idea yet again together with Jensen's formula to estimate the number of roots of the characteristic polynomial in the desired domain. 

Upon choosing $M=|d(z)|$, where we recall $d(z)={\rm wind}({\bm a}, z)$, and relating $T_N({\bm a}_z)$ to a circulant matrix, one finds that $t_{M+1}(z) \gtrsim \log N/N \gg t_M(z)$ (this crucially uses that any $z \in \cN_\theta$ is of distance order $\log N/N$ from ${\bm a}(\mathbb{S}^1)$) and bounds on $t_j(z)$ for $j > M$. These bounds further yield bounds on the Hilbert-Schmidt norms of $E(z)$ and related \corAB{matrices. 
Those bounds} are then used to obtain bounds on moments of $\|(E(z)Q)^k E_+(z)\|_{{\rm HS}}$ from which the rightmost bound in \eqref{eq:norm-bds} follows. The lower bound $s_{\min}({\rm Id}_N+ \delta Q E(z))$ essentially uses a change of basis argument together with standard bounds on $s_{\min}(\cdot)$ of matrices that are deterministic shifts of random matrices with entries satisfying some anti-concentration properties. Recall \eqref{eq-Edelta}. The left most bound in \eqref{eq:norm-bds} is obtained together with this bound on $s_{\min}(\cdot)$, a bound analogous to the right most bound in \eqref{eq:norm-bds}, \corAB{and a resolvent} expansion. 

\corAB{The proof of Step (c)} requires to show that certain linear combinations of {\em pure states} (constructed out of the roots of the polynomial $\zeta \mapsto \zeta^{N_-} {\bm a}_z(\zeta)$) serve as approximate singular vectors corresponding to singular values for $\{t_i(z)\}_{i=1}^M$. \corAB{The pure states can be shown to satisfy the desired localization properties, and hence so does these vectors.} Executing this requires an understanding the null space of the infinite dimensional Toeplitz operator, as well as various bounds on the small singular values of $T_N({\bm a}_z)$. 

We now turn to describe the ideas behind the proof of Theorem \ref{thm:eigenvec}(iii) (proof of the last part follows upon combining the previous parts together with a local estimate on the number of eigenvalues in disks of radius order $\log N/N$ and a double counting argument). Using the resolvent expansion we obtain from \eqref{eq:algeb-id} and \eqref{eq:simple1} that 
\begin{multline}\label{eq:coeff-eM}
E_-^\delta(z) (\lambda -z)v_\lambda= - E_{-+}^\delta(z) R_+(z) v_\lambda 
 =
 - E_{-+}(z)R_+(z) v_\lambda\\
  + \delta E_-(z) Q E_+(z_\lambda) R_+(z) v_\lambda 
 -
 \delta^2 E_-(z) ({\rm Id}_N+\delta Q E(z))^{-1} Q E(z) Q E_+(z) R_+(z) v_\lambda, 
\end{multline}
By a similar argument as in \eqref{eq:norm-bds}, the third term in \eqref{eq:coeff-eM}
turns out to be of order
$o(\delta)$ and $\|E_-^\delta(z)\| \le 2 \sqrt{M}$, uniformly for all $z \in \cN_\theta$ with probability $1-o(1)$. It was further argued in \cite{BVZ} that there exists some $M>M_0\geq 0$ so that $t_j(z)$ decay exponentially in $N$ for $j\in [M_0]$ for all $z \in \cN_\theta$. Therefore, upon choosing $\theta$ sufficiently large (depending only on $\delta$), recalling \eqref{gp5} and \eqref{gp8},
with $a_j= (e_j(z_\lambda)^* v)$, from \eqref{eq:coeff-eM} it follows
\beq\label{eq:exp-removed}  
\left\|\sum_{i=1}^{M_0}
\left[\sum_{j=1}^M a_j \cdot (f_i(z_\lambda)^* Q e_j(z_\lambda))\right] \delta_i\right\|=o(1),
\eeq
on an event with probability $1-o(1)$. 

The next key ingredient is the following observation: for any $\eta >0$ there exist $0 < c_\eta, C_\eta < \infty$ such that $c_\eta \le s_{\min}(\wt A(z) [[M_0]; [M_0]]) \le \|\wt A\| \le C_\eta$ simultaneously for $z \in \cN_\theta \cap \D(\wh z_0, C_0 \log N/N)$, on an event with probability $1-\eta$, where $\wt A(z)$ is the $M \times M_0$ matrix with its $(i,j)$-th entry $f_i(z)^* Q e_j(z)$. In \cite{BVZ} this was achieved using a chaining argument. 

The upshot of this last ingredient is that it together with Theorem \ref{thm:eigenvec}(ii) and \eqref{eq:exp-removed} imply $v_\lambda$ has a non-negligible (bounded away from zero) $\ell^2$ mass in the span of $\{e_i(z_\lambda)\}_{i=M_0+1}^M$. Now, upon showing that $e_i(z)$ spreads \corAB{out at scale $\log N/N$, for $i=M_0+1,\ldots, M$, uniformly for all $z \in \cN_\theta$, completes the argument.}

\begin{proof}[Proof of Theorem \ref{thm:eigenvec-outlier}]
We follow the same strategy as described above. 
For $\theta >0$ (determined below) there exists a net $\cN_\theta$ of $\D(0,1-\vep)$ of mesh size $N^{-\theta}$ such that $|\cN_\theta| \lesssim N^{2\theta}$. For the rest of the proof we take $M=1$ in \eqref{gp8} to define $E(z)$, $E_\pm(z)$, $E_{-+}(z)$ etc. We claim:
\beq\label{eq:sing-val-bd}
s_{\min}(J_N -z {\rm Id}_N) \leq (1-\vep)^N \quad \text{ and } \quad s_2(J_N - z {\rm Id}_N) \geq \vep, \quad \corAB{\text{ for all } z \in \cN_\theta.}
\eeq
Note that this implies $\|E(z)\| \le \vep^{-1}$, for all $z \in \cN_\theta$. Assuming the bound \eqref{eq:sing-val-bd} for the moment, as $E_N$ satisfies Assumption \ref{ass:spectral-norm}, it is straightforward to note that for any $\vep_0 >0$, with probability at least $1- N^{-\vep_0/6}$, 
\beq\label{eq:hs-norm-bd}
\max_{k \in \N} \max_{z \in \cN_\theta} N^{-k/2(1+\vep_0)} \max\left\{\|(E(z)Q)^k E_+(z)\|,  \|(E(z)Q)^k E(z)\|\right\} \le 1, \quad \text{ for all large } N.
\eeq
Using the resolvent expansion and recalling \eqref{eq-Edelta}, for any $K \in \N$, we note that 
\beq\label{eq:E-resolvent-expansion}
E^\delta(z) = E(z) + \sum_{k=1}^K (-1)^k \delta^k E(z) (Q E(z))^k + (-1)^{K+1} \delta^{K+1} E(z) (Q E(z))^{K+1} ({\rm Id}_N +\delta Q E(z))^{-1}.
\eeq
On the other hand, applying \cite[Lemma 10.10]{BVZ}, together with a union bound, for any $\alpha_0 >0$ there exists some $\beta_0=\beta_0(\alpha_0, \gamma) \in (0, \infty)$ such that 
\[
\P\left(\max_{z \in \cN_\theta} \|({\rm Id}_N +\delta Q E(z))^{-1}\| \ge N^{\beta_0}\right) \le N^{3\theta - \alpha_0}, \qquad \text{ for all large } N. 
\]
Hence, as $\gamma >1/2$, upon choosing $K$ large enough in \eqref{eq:E-resolvent-expansion} depending on $\gamma$ and $\beta_0$, the above bound together with \eqref{eq:hs-norm-bd}-\eqref{eq:E-resolvent-expansion} yield that 
\[
\P\left(\max_{z \in \cN_\theta}\|E^\delta(z)\| \ge 2 \vep^{-1}\right) \le N^{3\theta -\alpha_0} + N^{-\wt \vep} \le 2 N^{-\wt \vep},
\]
for $\wt \vep >0$, where in the last step we set $\alpha_0=3\theta+\wt \vep$. 
By a similar argument we also find 
\[
\P\left(\max_{z \in \cN_\theta}\|E(z) ({\rm Id}_N+\delta QE(z))^{-1} \delta QE_+(z)\| \ge N^{-\wt \vep}\right) \le 2 N^{-\wt \vep}.
\]
The last two probability bounds together with \eqref{eq:ef-to-pm-1}, upon setting $\theta= \wt \vep$, as $\{e_i(z)\}_{i=1}^N$ is an orthonormal basis, yield that on an event with probability at least $1-o(1)$ for any eigenvalue $\lambda \in \D(0, 1-\vep)$ of $J_N + N^{-\gamma} E_N$, there exists some $\theta_\lambda \in (0, 2\pi]$ such that 
\[
\|v_\lambda - e^{{\rm i} \theta_\lambda} e_1(z_\lambda)\|_\infty \le \|v_\lambda - e^{{\rm i} \theta_\lambda} e_1(z_\lambda)\| = O(N^{-\wt \vep}),
\]
where $z_\lambda \in \cN_\theta$ satisfying $|\lambda - z_\lambda| \le N^{-\theta}$. To complete the proof it suffices to show that 
\beq\label{eq:e1-gz}
\|e_1(z) - e^{{\rm i} \wh \theta_z} \gz_z\|_\infty \le \|e_1(z) - e^{{\rm i} \wh \theta_z} \gz_z\| \le 2 \vep^{-1/2} (1-\vep)^{N/2} \qquad \text{ for all } z \in \cN_\theta,
\eeq
where $\wh \gz_z:=(1,z,z^2, \ldots, z^{N-1})^t$, $ \gz_z:=\wh \gz_z/\|\wh \gz_z\|$, and some phase $\wh \theta_z \in (0, 2\pi]$. Turning to prove \eqref{eq:e1-gz}, recalling \eqref{gp2}, as $\|\gz_z\|=1$, and $\{e_i(z)\}_{i \in [N]}$ forms an orthonormal basis we deduce that 
\beq\label{eq:final-eq}
J_N - z {\rm Id}_N= \sum_{i=1}^N t_i(z) f_i(z) e_i(z)^*, \quad \gz_z = \sum_{i=1}^N \beta_i(z) e_i(z), \quad \text{ and } \quad \sum_{i=1}^N |\beta_i(z)|^2 =1,
\eeq
for some coefficients $\{\beta_i(z)\}_{i \in [N]}$. Observe that $\|(J_N - z {\rm Id}_N) \gz_z\| \leq  |z|^N/\|\wh \gz_z\| \leq |z|^N$ (this yields the desired bound on $s_{\min}$ of \eqref{eq:sing-val-bd}). Hence, from \eqref{eq:final-eq}, as $\{f_i(z)\}_{i=1}^N$ are orthonormal, using the bound $t_2(z) \ge \vep$ (see \eqref{eq:sing-val-bd}) we further derive that $\sum_{i=2}^N |\beta_i(z)|^2 \le \vep^{-1}(1-\vep)^N$. This immediately yields \eqref{eq:e1-gz}.

It remains to prove the right most inequality in \eqref{eq:sing-val-bd}. To this end, denote $C_N$ to be the circulant matrix whose first row is same as that of $J_N$. Set $\cR_n \subset {\rm Mat}_N(\C)$ be the set of matrices of rank at most $n$, for $n \in \N$. By \cite[Proposition 9.5]{BG05}, and as $C_N$ is a rank one perturbation of $J_N$, we find 
\begin{multline*}
s_2(J_N - z {\rm Id}_N) =\min\{\|J_N -z {\rm Id}_N - A\|: A \in \cR_{N-2}\} \\
\ge \min\{\|C_N -z {\rm Id}_N - A\|: A \in \cR_{N-1}\} = s_{\min}(C_N - z {\rm Id}_N) \ge \vep,
\end{multline*}
completing the proof of \eqref{eq:sing-val-bd} and of the theorem. 
\end{proof}



\begin{thebibliography}{99}

\bibitem{A12}P.~Alexandersson.
\newblock Schur polynomials, banded Toeplitz matrices and Widom’s formula. 
\newblock {\em Electronic Journal of Combinatorics}, {\bf 22}(4), paper no.~22, 2012. 

\bibitem{ADK22}J.~Alt, R.~Ducatez, and A.~Knowles.
\newblock The completely delocalized region of the Erd\H{o}s-R\'enyi graph. 
\newblock {\em Electronic Communications in Probability}, {\bf 27}, paper no.~10, 2022. 

\bibitem{ADK24}J.~Alt, R.~Ducatez, and A.~Knowles.
\newblock Localized phase for the Erd\H{o}s-R\'enyi graph.
\newblock {\em Communications in Mathematical Physics}, {\bf 405}, article no.~9, 2024.  


\bibitem{AM15}N.~Anantharaman and E.~Le Masson.
\newblock Quantum ergodicity on large regular graphs. 
\newblock {\em Duke Mathematical Journal}, {\bf 164}, 723--765, 2015.

\bibitem{AGZ}Greg~W. ~Anderson,  Alice~Guionnet,  and  Ofer~Zeitouni.
\newblock {\em An  introduction  to  random  matrices}.
\newblock No. ~118. Cambridge University Press, 2010.


  
  
  \bibitem{BPZ}A.~Basak, E.~Paquette, and O.~Zeitouni.
  \newblock Regularization of non-normal matrices by Gaussian noise - the banded Toeplitz and twisted Toeplitz cases.
  \newblock {\em Forum of Mathematics, Sigma}, {\bf 7}, E3, 2019.
  
  \bibitem{BPZ2}A.~Basak, E.~Paquette, and O.~Zeitouni.
  \newblock Spectrum of random perturbations of Toeplitz matrices with finite symbols.
  \newblock {\em Transactions of the American Mathematical Society}, {\bf 373}(7), 4999--5023, 2020.
  
  \bibitem{BVZ}A.~Basak, M.~Vogel, and O.~Zeitouni.
  \newblock Localization of eigenvectors of non-Hermitian banded noisy Toeplitz matrices.
  \newblock {\em Probability and Mathematical Physics}, {\bf 4}(3), 477--607, 2023.
  
  \bibitem{BVZ2}A.~Basak, M.~Vogel, and O.~Zeitouni.
  \newblock {\em Private Communication}. 
  
  \bibitem{BZ}A.~Basak and O.~Zeitouni.
  \newblock Outliers of random perturbations of Toeplitz matrices with finite symbols.
  \newblock {\em Probability Theory and Related Fields}, {\bf 178}(3), 771--826, 2020.
  
  \bibitem{BC16}C.~Bordenave and M.~Capitaine.
  \newblock Outlier eigenvalues for deformed i.i.d. random matrices. 
  \newblock {\em Communications on Pure and Applied Mathematics}, {\bf 69}(11), 2131--2194, 2016.
  
  \bibitem{BCC24}C.~Bordenave, M.~Capitaine, and F.~Chapon.
  \newblock Outliers of perturbations of banded Toeplitz matrices.
  \newblock {\em ArXiv preprint}, arXiv:2410.16439, 2024. 
  
  \bibitem{BC12}C.~Bordenave and D.~Chafa\"{i}.
  \newblock Around the circular law.
  \newblock {\em Probability Surveys}, {\bf 9}, 1--89, 2012.
  
  \bibitem{BCG22}C.~Bordenave, D.~Chafa\"{i}, and D. Garc\'{i}a-Zelada.
  \newblock Convergence of the spectral radius of a random matrix through its characteristic polynomial.
  \newblock {\em Probability Theory and Related Fields}, {\bf 182}(3-4), 1163--1181, 2022. 



   
  
\bibitem{BG05}A.~B\"{o}ttcher and S.~M.~Grudsky.
\newblock {\em Spectral Properties of Banded Toeplitz Matrices}.
\newblock Vol.~96, Siam, 2005.  

 \bibitem{BS99}A. B\"ottcher and B. Silbermann.
  \newblock {\em Introduction to large truncated Toeplitz matrices}.
  \newblock Universitext, Springer, New York, 1999.
  
  \bibitem{BY17}P.~Bourgade and H.-T. Yau.
  \newblock The eigenvector moment flow and local quantum unique ergodicity. 
  \newblock {\em Communications in Mathematical Physics}, {\bf 350},  231--278,  2017.
  
  \bibitem{CEH25}G.~Cipolloni, L.~Erd\H{o}s, and J.~Henheik.
  \newblock Eigenstate thermalisation at the edge for Wigner matrices.
  \newblock {\em Annales de l'Institut Henri Poincar\'{e}, Probabilit\'{e}s et Statistiques}, to appear. 




\bibitem{C23}S.~Coste.
\newblock Sparse matrices:~convergence of the characteristic polynomial seen from infinity.
\newblock {\em Electronic Journal in Probability}, {\bf 28}, article no.~8, 40 pp, 2023. 

\bibitem{CGZ23}S.~Coste, G.~Lambert, and Y.~Zhu.
\newblock The characteristic polynomial of sums of random permutations and regular digraphs.
\newblock {\em International Mathematics Research Notices}, {\bf 2024}(3), 2461--2510, 2023.

\bibitem{DH}R.~B.~Davies and M.~Hager.
  \newblock Perturbations of Jordan matrices.
  \newblock {\em Journal of Approximation Theory} {\bf 156}, 82--94, 2009.
  
  \bibitem{D25}R.~Drogin.
  \newblock Localization of one-dimensional random band matrices.
  \newblock {\em ArXiv preprint}, arXiv:2508.05802, 2025.
  
  \bibitem{EKYY13}L.~Erd{\H o}s, A.~Knowles, H.-T.~Yau, and J.~Yin.
  \newblock Spectral statistics of {E}rd{\H o}s-{R}{\'e}nyi graphs I:~Local semicircle law, 
  \newblock {\em Annals of Probability}, {\bf 41}(3B), 2279--2375, 2013.

\bibitem{ESY09b}L.~Erd{\H o}s, B.~Schlein, and H.-T.~Yau.
\newblock Local semicircle law and complete delocalization for {Wigner} random matrices. 
\newblock {\em Communications in Mathematical Physics}, {\bf 287},  641--655, 2009.

\bibitem{ESY09a}L.~Erd{\H o}s, B.~Schlein, and H.-T.~Yau. 
\newblock Semicircle law on short scales and delocalization of eigenvectors for Wigner random matrices.
\newblock {\em Annals of Probability}, {\bf 37}(3), 815--852,  2009.

  
  \bibitem{ER24}L.~Erd\H{o}s and V.~Riabov.
  \newblock Eigenstate Thermalization Hypothesis for Wigner-Type Matrices.
  \newblock {\em Communications in Mathematical Physics}, {\bf 405}, article no.~282, 2024. 

\bibitem{FPZ}
O.~N.~Feldheim, E.~Paquette, and O.~Zeitouni.
\newblock Regularization of non-normal matrices by Gaussian noise.
\newblock {\em International Mathematics Research Notices} {\bf 18}, 8724--8751, 2015.
\corAB{
\bibitem{GK98}I.~Y.~Goldsheid and B.~A.~Khoruzhenko. 
\newblock Distribution of eigenvalues in non-Hermitian Anderson models. 
\newblock {\em Physical review letters}, {\bf 80}(13), 2897, 1998.
\bibitem{GK00}I.~Goldsheid and B.~Khoruzhenko. 
\newblock Eigenvalue curves of asymmetric tridiagonal random matrices. 
\newblock {\em Electronic  Journal Probability}, {\bf 5}, no.~16, 28 pp., 2000. 
\bibitem{GK03}I.~Y.~Goldsheid and B.~A.~Khoruzhenko. 
\newblock Regular spacings of complex eigenvalues in the one-dimensional non-Hermitian Anderson model. 
\newblock {\em Communications in Mathematical Physics}, {\bf 238}(3), 505--524, 2003.
\bibitem{GS18}I.~Goldsheid and S.~Sodin. 
\newblock Real eigenvalues in the non-Hermitian Anderson model. 
\newblock {\em The Annals of Applied Probability}, {\bf 28}(5), 3075--3093, 2018.
}


\bibitem{G71}V.~V.~Grushin.
\newblock Les probl\`emes aux limites d\'eg\'en\'er\'es et les op\'erateurs pseudo-diff\'erentiels.
\newblock {\em Actes du Congr\`es International des Math\'ematiciens} Vol.~{\bf 2}, 737--743, 1971.



\bibitem{GWZ} A.~Guionnet, P.~M.~Wood, and O.~Zeitouni.
\newblock Convergence of the spectral measure of non-normal matrices.
\newblock {\em Proceedings of the  American Mathematical Society} {\bf 142}, 667--679, 2014.

\bibitem{HL25}W.~Hachem and M.~Louvaris. 
\newblock On the spectral radius and the characteristic polynomial of a random matrix with independent elements and a variance profile.
\newblock {\em ArXiv preprint}, arXiv:2501.03657, 2025.

\bibitem{KK23}S.~Kalmykov and L.~V.~Kovalev. 
\newblock Self intersections of Laurent polynomials and the density of Jordan curves.
\newblock {\em Proceedings of the American Mathematical Society}, {\bf 151}(2), 547--554, 2023. 

\bibitem{L04}R.~Lata{\l}a.
\newblock Some estimates of norms of random matrices. 
\newblock {\em Proceedings of the American Mathematical Society}, {\bf 133}(5), 12733-1282, 2004. 

\bibitem{LO20}K.~Luh and S.~O'Rourke.
\newblock Eigenvector delocalization for non-Hermitian random matrices and applications.
\newblock {\em Random Structures \& Algorithms}, {\bf 57}(1), 169--210, 2020.  

 \bibitem{LT}A.~Lytova and K.~Tikhomirov.
  \newblock On delocalization of eigenvectors of random non-Hermitian matrices.
  \newblock {\em Probability Theory and Related Fields}, {\bf 177}, 465--524, 2020. 


\bibitem{college} M. Marcus.
  \newblock Determinant of sums.
  \newblock {\em The college mathematical journal} {\bf 21}, 130--135, 1990.

\bibitem{OW23}S.~O'Rourke and P.~M.~Wood. 
\newblock Quantitative results for banded Toeplitz matrices subject to random and deterministic perturbations. 
\newblock {\em Linear Algebra and its Applications}, {\bf 657}, 50--126, 2023.

\bibitem{QG23}F.~Quentin, and D.~Garc\'{i}a-Zelada. 
\newblock Asymptotic analysis of the characteristic polynomial for the Elliptic Ginibre Ensemble.
\newblock {\em arXiv preprint}, arXiv:2306.16720, 2023.
%
%
  %
\corAB{
\bibitem{SV}
  J.~Sj\"{o}strand and M.~Vogel.
  \newblock Toeplitz band matrices with small random perturbations. 
  \newblock {\em Indagationes Mathematicae}, {\bf 32}(1), 275--322, 2021.
  }
\bibitem{SV1}J.~Sj\"{o}strand and M.~Vogel.
  \newblock General Toeplitz matrices subject to Gaussian perturbations.
  \newblock {\em Annales Henri Poincar\'e}, {\bf 22}, 49--81, 2021. 


\bibitem{S02}
P.~\'{S}niady.
\newblock Random regularization of Brown spectral measure.
\newblock {\em Journal of Functional Analysis} {\bf 193}, 291--313, 2002.

 \bibitem{SZ07}J.~Sj{\"o}strand and M.~Zworski.
  \newblock {Elementary linear algebra for advanced spectral problems}.
  \newblock Annales de l'institut Fourier, {\bf 57}(7), 2095--2141, 2007.



\bibitem{TV08}T.~Tao and V.~Vu.
\newblock Random matrices:~the circular law. 
\newblock {\em Communications in Contemporary Mathematics}, {\bf 10}(2), 261--307, 2008.

\bibitem{TVK10}T.~Tao, V.~Vu, and M.~Krishnapur. 
\newblock Random matrices: universality of ESDs and the circular law. 
\newblock {\em The Annals of Probability}, {\bf 38}(5), 2023--2065, 2010.

\bibitem{T20}K.~Tikhomirov.
\newblock Invertibility via distance for non-centered random matrices with continuous distributions.
\newblock {\em Random Structures \& Algorithms}, {\bf 57}(2), 526--562, 2020. 

  
  \bibitem{tref91}L.~N.~Trefethen.
  \newblock Pseudospectra of matrices.
 \newblock {\em Numerical Analysis}, {\bf 260}, 234--266, 1991. 
 
 \bibitem{TC04}L.~N.~Trefethen and S.~J.~Chapman.
 \newblock Wave Packet Pseudomodes of Twisted Toeplitz Matrices.
 \newblock {\em Communications on Pure and Applied Mathematics}, {\bf 57}(9), 1233--1264, 2004. 
  


\bibitem{TE05}L.~N.~Trefethen and M.~Embree.
\newblock {\em Spectra and pseudospectra:~the behavior of nonnormal matrices and operators}.
\newblock Princeton University Press, 2005.

\bibitem{RV15}M.~Rudelson and R.~Vershynin. 
\newblock Delocalization of eigenvectors of random matrices with independent entries.
\newblock {\em Duke Mathematical Journal}, {\bf 164}(13), 2507--2538, 2015.

\bibitem{RV16}M.~Rudelson and R.~Vershynin.
\newblock No-gaps delocalization for general random matrices.
\newblock {\em Geometric and Functional Analysis}, {\bf 26}, 1716--1776, 2016.



 
 \bibitem{V67}J.~M.~Varah. 
 \newblock The computation of bounds for the invariant subspaces of a gen­eral matrix operator. 
 \newblock {\em Technical Report} CS 66, Computer Science Department, Stanford University, 1967.



\bibitem{V20}M.~Vogel.
\newblock Almost sure Weyl law for quantized tori.
\newblock {\em Communications in Mathematical Physics}, {\bf 378}(2), 1539 --1585, 2020.

\bibitem{V24}M.~Vogel.
\newblock  Pseudospectra and eigenvalue asymptotics for disordered non-selfadjoint operators in the semiclassical limit. 
\newblock {\em Frontiers in Applied Mathematics and Statistics}, Vol.~{\bf 10}, 2024.

\bibitem{VZ21}M.~Vogel and O.~Zeitouni. 
\newblock Deterministic equivalence for noisy perturbations.
\newblock {\em Proceedings of the American Mathematical Society}, {\bf 149}, 3905--3911, 2021. 


\bibitem{W}P.~M.~Wood.
\newblock Universality of the ESD for a fixed matrix plus small random noise: A stability approach.
\newblock {\em Annales de l'Institut Henri Poincar\'{e}, Probabilit\'{e}s et Statistiques} {\bf 52(4)},1877--1896, 2016.



\end{thebibliography}
\end{document}